\newcommand{\defeq}{\stackrel{\rm{def}}{=}}
\DeclareMathOperator{\sign}{sign}
\newtheorem{theorem}{Theorem}[section]
\newtheorem{proposition}{Proposition}[section]
\newtheorem{lemma}[proposition]{Lemma}
\newtheorem{conjecture}[theorem]{Conjecture}
\theoremstyle{remark}
\newtheorem{remark}[proposition]{Remark}
\numberwithin{equation}{section}
\title[2D HBO]{Higher dimensional generalization of\\ the Benjamin-Ono equation: 2D 
case} 
\author[O. Ria\~no]{Oscar Ria\~no}
\address{Department of Mathematics  \& Statistics\\Florida International University,  Miami, FL, USA}
\curraddr{}
\email{orianoca@fiu.edu}
\author[S. Roudenko]{Svetlana Roudenko}
\address{Department of Mathematics \& Statistics\\Florida International University,  Miami, FL, USA}
\curraddr{}
\email{sroudenko@fiu.edu}
\author[K. Yang]{Kai Yang}
\address{Department of Mathematics \& Statistics\\Florida International University,  Miami, FL, USA}
\curraddr{}
\email{yangk@fiu.edu}
\subjclass[2010]{35Q53, 35Q35, 35B40, 35B44, 65M70, 65N35} 
\keywords{higher-dimensional Benjamin-Ono equation, fractional KdV, solitary waves, global existence, radiation, blow-up, rational basis functions, Wiener functions, 
soliton interaction}
\begin{document}

\begin{abstract}
We consider a higher-dimensional version 
of the Benjamin-Ono (HBO) equation in the 2D setting: $u_t- \mathcal{R}_1 \Delta u +  \frac{1}{2}(u^2)_x=0, (x,y) \in \mathbb{R}^2$, which is $L^2$-critical, and investigate properties of solutions both analytically and numerically. For a generalized equation (fractional 2D gKdV) after  deriving the Pohozaev identities, we obtain non-existence conditions for solitary wave solutions, then prove uniform bounds in the energy space or conditional global existence,
and investigate the radiation region, a specific wedge in the negative $x$-direction. We then introduce our numerical approach in a general context, and apply it to obtain the ground state solution in the 2D critical HBO equation, then show that its mass is a threshold for global vs. finite time existing solutions, which is typical in the focusing (mass-)critical dispersive equations. We also observe that globally existing solutions tend to disperse completely into the radiation in this nonlocal equation. 
The blow-up solutions travel in the positive $x$-direction with the rescaled ground state profile while also radiating dispersive oscillations into the radiative wedge. We conclude with examples of different interactions of two solitary wave solutions, including weak and strong interactions.      

\end{abstract}

\maketitle

\tableofcontents

\section{Introduction}\label{S:Introduction}
We study the following higher dimensional version of the Benjamin-Ono  (HBO) equation 
\begin{align}\label{E:HBO}
u_t- \mathcal{R}_1 \Delta u+  \frac{1}{2}(u^2)_x=0, \qquad (x,y)\in \mathbb{R}^2, \, \, t\in \mathbb{R},
\end{align}
where the operator $\mathcal{R}_1$ denotes the Riesz transform operator with respect to the first variable defined by the singular integral 
\begin{equation*}
\mathcal{R}_1 f
(x,y)=\frac{1}{2\pi} \, \textit{p.v.} \int \frac{(x-z_1) \, 
f(z_1,z_2)}{\big((x-z_1)^2+(y-z_2)^2 \big)^{3/2}}\, dz_1 \, dz_2,
\end{equation*}
and $\widehat{\mathcal{R}_1 f}(\xi_1,\xi_2)=\frac{-i\xi_1}{|(\xi_1,\xi_2)|}\widehat{f}(\xi_1,\xi_2)$ with $|(\xi_1,\xi_2)|=\sqrt{\xi_1^2+\xi_2^2}$.
\smallskip

One of the first mentioning of this equations was by Shrira in \cite{S1989}, where he was describing the $2d$ long-wave perturbations in a boundary-layer type shear flow. These perturbations were weakly nonlinear, the flow did not have any inflection points, and the perturbations would be valid for the boundary layers along an inviscid boundary for free surface flows. That model reduced to an equation for the amplitude $u$ of the longitudinal velocity of the fluid, which is exactly the equation \eqref{E:HBO}. There are various extensions or reductions of the equation \eqref{E:HBO} that have been studied since then, for some initial studies, see \cite{PS}, \cite{A}, 
\cite{DK1994}, \cite{PS1994}, 
and for recent investigations, refer to \cite{Riano2021}, \cite{EsfaPastor2018}, \cite{Schippa2020} and references therein. 

In the one-dimensional case, the multiplier associated to the Riesz transform coincides with that of the Hilbert transform operator. From this standpoint, \eqref{E:HBO} can be regarded as a two-dimensional extension of the Benjamin-Ono equation (BO)
\begin{equation}\label{E:BO}
u_t- \mathcal{H}\partial_x^2 u+ \frac{1}{2}(u^2)_x=0, \qquad (x,t)\in \mathbb{R}^2,
\end{equation}
where $\widehat{\mathcal{H}f}(\xi)=-i\sign(\xi)\widehat{f}(\xi)$. We remark that the equation \eqref{E:BO}, including other nonlinearities, is of interest in various water wave models such as waves in deep water, e.g., see \cite{AbBOnaFellSaut,benjamin_1967,BONAHEN,BK04,Ono_1975,SvetKaiWangBO} and reviews \cite{RGustav,miller2019nonlinear}. 

On the other hand, the equation \eqref{E:HBO} can also be seen as a particular case of the higher dimensional fractional generalized KdV equation
\begin{equation}\label{E:fZK}
u_t- \partial_x (-\Delta)^{s} u+ \frac{1}{m}(u^m)_x=0, \quad (x,\dots)\in \mathbb{R}^d, \, \, t\in\mathbb{R}, \, \, \,  m >1,
\end{equation} 
where $d\geq 2$, $0<s<1$, $m$ is integer, and $(-\Delta)^{s}$ denotes the fractional Laplacian of order $s$ defined by the Fourier multiplier with symbol $|\xi|^{2s}=\big(\xi_1^2+\dots+\xi_d^2\big)^s$. This generalization is more evident by recalling that 
$\mathcal{R}_1(u) = \mathcal{F}^{-1}\left( \frac{i\xi_1}{|\xi|}\hat{u} \right)(x)=\partial_x (-\Delta)^{-\frac{1}{2}}u$, which yields 
$$
\mathcal{R}_1(-\Delta u)= \mathcal{F}^{-1}\Big( \frac{i\xi_1}{|\xi|} \cdot |\xi|^2 \hat{u} \Big)(x)=\partial_x (-\Delta)^{\frac{1}{2}}u,
$$
and \eqref{E:fZK} generalizes the HBO equation \eqref{E:HBO} with a fractional dispersion operator of order $s$ and nonlinearity $m$. Setting $s=\frac{1}{2}$ and $m=2$ in \eqref{E:fZK} yields \eqref{E:HBO}, whereas $s=1$ and $m=2$ in \eqref{E:fZK} agrees with the Zakharov-Kuznetsov equation (ZK), which in $3d$ describes the propagation of ionic-acoustic waves in magnetized plasma \cite{ZK} and in $2d$, for example, it serves as the amplitude equation for long waves on the free surface of a thin film in a specific fluid and viscosity parameters \cite{MM1989}.
The family of equations \eqref{E:fZK} is useful to measure the competition between the effects of dispersion and nonlinearity in a $d$-dimensional model. 

In general, the power $m-1>0$ in \eqref{E:fZK} does not need to be an integer number. One can take, for instance, $m-1=k/p$, where $k$ and $p$ are relatively prime and $p$ is odd. Consequently, it is  possible to set a branch of the map $ \omega \mapsto \omega^{1/p}$ real on the real axis. A similar condition has been used before in \cite{esfahani2015}. Alternatively, the nonlinearity in \eqref{E:fZK} can be replaced with $\partial_x(|u|^{m-1} u)$, and thus, one can consider the equation 
\begin{equation}\label{E:AfZK}
u_t- \partial_x (-\Delta)^{s} u+ \frac{1}{m}\partial_x(|u|^{m-1}u)=0, \qquad (x,\dots)\in \mathbb{R}^d, \, \, t\in\mathbb{R}, \, \, \,  m >1.
\end{equation} 
In what follows, when $m$ is not an integer, we will consider \eqref{E:AfZK}.

Real solutions of \eqref{E:HBO} formally satisfy at least three conservation laws: the $L^2$-norm (or mass) conservation
\begin{align}\label{E:mass}
M[u(t)] \defeq \int_{\mathbb{R}^2} |u(x,y,t)|^2 \, dx dy =M[u(0)],
\end{align}
the energy (or Hamiltonian) conservation 
\begin{align}\label{E:energy}
E[u(t)] \defeq \frac{1}{2} \int_{\mathbb{R}^2} |(-\Delta)^{\frac{1}{4}}u(x,y,t)|^2 dx dy -\frac{1}{6} \int_{\mathbb{R}^2} \big(u(x,y,t)\big)^{3} \,  dx dy = E[u(0)],
\end{align}
and the 
$L^1$-type conservation 
\begin{equation}\label{E:l1inte}
\int_{\mathbb{R}} u(x,y,t)\, dx =\int_{\mathbb{R}} u(x,y,0)\, dx,
\end{equation}
which can also be stated in a $2d$ form 
\begin{equation}\label{E:l1inte-g}
\int_{\mathbb{R}^2} u(x,y,t)\, dx dy=\int_{\mathbb{R}^2} u(x,y,0)\, dx dy.
\end{equation}
We mention that no other conserved quantities are known for \eqref{E:HBO}. In contrast, the BO equation \eqref{E:BO} is a completely integrable Hamiltonian system, see \cite{BK_79,NakaAk}.

The equation \eqref{E:HBO} is invariant under the scaling: if $u$ solves \eqref{E:HBO}, then so does 
\begin{equation}\label{scaling1}
u_{\lambda}(x,y,t)=\lambda u(\lambda x,\lambda y,\lambda^2 t)
\end{equation}
for any positive $\lambda$. Consequently, the homogeneous Sobolev space $\dot{H}^{r_{c}}$ is invariant under the scaling \eqref{scaling1} when $r_c=0$, in other words, the equation \eqref{E:HBO} is $L^2$-critical. (For a general case of \eqref{E:fZK} and \eqref{E:AfZK}, see Section \ref{S:general}.)


We next recall some results regarding the well-posedness for the Cauchy problem associated to \eqref{E:HBO} and \eqref{E:fZK} in Sobolev spaces. In \cite{linaO}, it was proved that \eqref{E:HBO} is locally well-posed in $H^r(\mathbb{R}^2)$ whenever $r>5/3$. In \cite{RobertS}, the local well-posedness theory was extended for regularities $r>3/2$. Furthermore, the well-posedness results in weighted Sobolev spaces as well as some unique continuation principles for the equation \eqref{E:HBO} were studied in \cite{OscarWHBO}. We remark that the above local well-posedness results were obtained via compactness methods as one cannot solve the initial value problem associated to \eqref{E:HBO} by a Picard iterative method implemented on its integral formulation for any initial data in the Sobolev space $H^r(\mathbb{R}^2)$, $r\in \mathbb{R}$ (see \cite[Theorem 4.1 and Corollary A.1]{linaO}). As far as the equation \eqref{E:fZK}, the following well-posedness results hold. When $d\geq 2$, $\frac{1}{2}\leq s <1$ and $m=2$ in \eqref{E:fZK}, it was proved in \cite{RobertS} that \eqref{E:fZK} is locally well-posed in $H^r(\mathbb{R}^d)$ provided that $r>\frac{d+3}{2}-2s$. This same result was proved before for $d\geq 3$ and $s=\frac{1}{2}$ in \cite{linaO}. On the other hand,  by the standard parabolic regularization argument (see \cite{AbBOnaFellSaut,Iorio}), the Cauchy problem associated to \eqref{E:fZK} is locally well-posed in $H^r(\mathbb{R}^d)$ whenever $r>\frac{d}{2}+1$ for any $m>1$ integer, and $0<s<1$ fixed. To the best of our knowledge there are no results concerning the global well-posedness (GWP) for the Cauchy problem associated to \eqref{E:fZK} or \eqref{E:AfZK} with $d\geq 2$ in the current literature. Regarding the GWP for \eqref{E:fZK} in $d=1$, see \cite{Herr_Ione_Keni_Koch,MOVPIL} and references therein.

The solitary-wave solutions for the equation \eqref{E:HBO} are of the form
$$
u(x,y,t)=Q_c(x-ct,y),
$$ 
where $c>0$ denotes the speed of propagation or the scaling factor for $Q_c(x,y)=c \, Q(cx,cy)$, and $Q$ is a real-valued vanishing at infinity solution of
\begin{align}\label{E:GS}
Q+(-\Delta)^{\frac{1}{2}}Q-\tfrac{1}{2}Q^{2}=0.
\end{align}
(For a general case of \eqref{E:fZK}, see Sections \ref{S:general} and \ref{S:Soliton}.)
The existence and spatial decay of solutions for \eqref{E:GS} were considered in \cite{M}. The uniqueness of positive solutions can be deduced as a particular case of the results obtained in \cite{Frank2016} (see also \cite{Frank2013} for 1d) for a class of nonlocal equations
\begin{equation}\label{NLEQ}
   \Psi+(-\Delta)^{s}\Psi-|\Psi|^{r}\Psi=0, \hspace{0.2cm} \text{in } \mathbb{R}^d,
\end{equation}
with $d\geq 1$, $s\in (0,1)$ and $0<r<r_{\ast}=r_{\ast}(d,s)$, where
\begin{equation*}
   r_{\ast}= \begin{cases} \frac{4s}{d-2s} &\mbox{ for } 0<s<\frac{d}{2}, \\
+\infty & \mbox{ for } s\geq \frac{d}{2}. 
\end{cases}
\end{equation*}

In the general $2d$ setting we present some further results regarding solitary waves in Section \ref{S:general}. In particular, from Pohozaev identities the nonexistence of solitary wave solutions for the fractional 2d generalized KdV equation is obtained. 

Furthermore, in the same general fractional setting we review existence of solutions in sufficiently regular space and then obtain uniform bounds or global existence criteria in the $L^2$-subcritical, critical and supercritical cases, see Theorem \ref{propcriti}, though this result is conditional on the local well-posedness (lwp) in the energy space $H^s(\mathbb R^2)$, $0<s<1$, since we only have the local well-posedness in $H^r(\mathbb R^2)$, $r>2$. 
We note that in the $L^2$-critical case, the threshold for global existence is given by the mass of the ground state, and we investigate this threshold more closely in the later part of the paper via numerical simulations. In particular, we show that all sufficiently localized data above the threshold blow up in finite time, confirming the Conjecture \ref{C:critical}, see subsection \ref{S:bounds} (we tried initial data with exponential and polynomial decays, with the polynomial decay as low as $r^{-2}$, which is below the ground state decay). We also studied the global existence and observed that such solutions tend to disperse completely into the radiation in the nonlocal $L^2$-critical $2d$ HBO equation, see subsection \ref{S:global}. The radiation region is formed as the wedge around the negative $x$-direction with the opening angle as big as $\tan \theta = 2 \sqrt {2}$ in this equation, and we show that in general this radiation region only depends on the dispersion operator in the linear equation, see subsection \ref{S:radiation}. 

After obtaining some results about a single maximum initial data, we turn to the interaction of the solitary waves, and show that various interactions are possible, which depends on the initial geometrical configuration and the distribution of mass in both solitary waves. In particular, there can be no significant (or only weak) interaction, and we also observe strong interactions, where both solitary waves can merge into one and either blow up in finite time, or disperse (eventually both of them), see subsection \ref{S:interaction}.

\smallskip

The paper is organized as follows: in Section \ref{S:general} we study a generalized fractional KdV in $2d$ setting and review the conserved quantities, scaling invariance, derive Pohozaev identities, which leads to the results about the non-existence of solitary wave solutions in various contexts. After that in subsection \ref{S:bounds} we consider $L^2$-subcritical, critical and supercritical cases in the general fractional setting and obtain uniform bounds in the energy space in Theorem \ref{propcriti}, parts (C1), (C2), and (C3), respectively; in particular, noticing that in the $L^2$-critical case of the HBO equation the mass of the ground state solution plays the role of the threshold for the global existence. In subsection \ref{S:radiation} we consider a linear 2d fractional KdV equation   and show the radiation region, which is a wedge with a specific angle, depending only on the dispersion operator and not on the nonlinearity or dimension. Next, in Section \ref{S:Numerical method} we describe our numerical approach, including space discretization via the rational basis (eigen)functions, discretization of the fractional Laplacian, in particular, using the Dunford-Taylor formula to change the fractional Laplacian into the full Laplacian in Galerkin formulation, and the extension to the higher dimensional computations. In Section \ref{S:Soliton} we use Petviashvili's iteration method to obtain the ground state profile (or its rescaled versions). Finally, in Section \ref{S:Numerical solution} we show the numerical results confirming the ground state mass threshold for global existence vs. finite time blow-up, and study the behavior of globally existing solutions more carefully, finding that even if a solitary wave-type solution starts traveling to the right, it eventually stops moving and subdues into the radiation via dispersive oscillations in the negative $x$-directions. Blow-up solutions, on the other hand, travel in the positive $x$-direction and blow up with the rescaled ground state profiles. Lastly, in subsection \ref{S:interaction} we examine interaction of two solitary waves in different geometrical settings and of different sizes and show weak and strong interactions. 
\smallskip
  
{\bf Acknowledgments.}
All three authors were 
partially supported by the NSF grant DMS-1927258 (PI: S. Roudenko).


\section{Remarks on the fractional $2d$ generalized KdV equation}
\label{S:general}

In this section we take a more general approach and consider equations \eqref{E:fZK} and \eqref{E:AfZK} in the $2d$ setting. We start with recalling some useful invariances and inequalities such as the conserved quantities and scaling invariance, as well as the Gagiardo-Nirenberg inequality and discuss  
some results regarding the existence or non-existence of solutions to the stationary problems of the form \eqref{NLEQ}.  As an application, we show uniform estimates in time for solutions of \eqref{E:fZK} in the energy space $H^s(\mathbb{R}^2)$. Additionally, we present a formal analysis of the dispersive relation of \eqref{E:fZK}, which allows us to conjecture and study regions in space where dispersive oscillations, or radiation, occur for solutions of \eqref{E:fZK}. In particular, setting $s=\frac{1}{2}$ and $m=2$ the results of this section are valid for \eqref{E:HBO}.

\subsection{Preliminaries on the fractional 2d gKdV and ground state solutions}\label{S:basics}

We focus our discussion on the following generalization of \eqref{E:fZK} 
\begin{equation}\label{SHBO-IVP}
   u_t+\nu_1 \partial_{x}(-\Delta)^{s} u +\frac{\nu_2}{m}(u^m)_x=0, \qquad (x,y)\in \mathbb{R}^2, \, \,t\in \mathbb{R}, \, \, m>1
\end{equation}
where $\nu_1\neq 0$, $\nu_2\in \{1,-1\}$ and $0<s<1$. We use two parameters $\nu_1 $ and $ \nu_2 $ to indicate subtle differences in the existence of the ground state solutions and other properties, see Remark \ref{remarothereq} below. 

During their lifespans, solutions of the equation \eqref{SHBO-IVP} satisfy the mass conservation \eqref{E:mass}, the $L^1$-type invariance \eqref{E:l1inte}, and the energy conservation, which in this case is given as
\begin{align}\label{genergy}
E_s[u(t)] = \frac{1}{2} \int_{\mathbb{R}^2} |(-\Delta)^{\frac{s}{2}}u(x,y,t)|^2 \, dxdy +\frac{\nu_2}{\nu_1 m(m+1)} \int_{\mathbb{R}^2} \big(u(x,y,t)\big)^{m+1} \,  dx dy = E_s[u(0)].
\end{align}
The equation \eqref{SHBO-IVP} is invariant under the scaling 
$$u_{\lambda}(x,y,t)=\lambda^{\frac{2s}{m-1}} u(\lambda x,\lambda y,\lambda^{1+2s} t)$$
for any positive $\lambda$. Thus, \eqref{SHBO-IVP} is invariant in the Sobolev space $\dot{H}^{r_{c}}(\mathbb{R}^2)$ with 
\begin{equation}\label{CritiInd}  
r_{c}=1-\frac{2s}{m-1}.
\end{equation}
The critical index $r_{c}$ is convenient for classifying the equation \eqref{SHBO-IVP} according to the values of $m$ and $s$ ($m>1$, $s>0$).  When $1<m<2s+1$ ($r_{c}<0$), the equation \eqref{SHBO-IVP} is referred to as the $L^2$-subcritical equation; if $m=2s+1$ ($r_{c}=0$), the equation is $L^2$-critical; when $m>2s+1$ ($r_{c}>0$), the equation \eqref{SHBO-IVP} is $L^2$-supercritical. We also note that the equation is energy-critical if $m=\frac{1+s}{1-s}$ (or $r_c=s$).

We are interested in studying localized solitary-wave solutions for the equation \eqref{SHBO-IVP} of the form $u(x,y,t)=\varphi(x-c\,t,y)$, where $c\in \mathbb{R}$. Substituting $\varphi(x-c\,t,y)$ into \eqref{SHBO-IVP}, integrating once with respect to the variable $z=x-ct$, and assuming that $\varphi$ vanishes at infinity, we deduce that $\varphi$ satisfies
\begin{equation}\label{SWEQ}
 -c\varphi+\nu_1 (-\Delta)^{s}\varphi+\frac{\nu_2}{m}\varphi^{m}=0.
\end{equation}
Setting $c>0$ and $\nu_1<0$, $\nu_2=1$, the existence and uniqueness of solutions for \eqref{SWEQ} are the consequences of the results in \cite{Frank2016} for the class of the equations of type \eqref{NLEQ}. We briefly recall these results. To establish the existence of solutions one can use the Weinstein classical approach, which consists of determining the best constant $C_{GN}$ in the Gagliardo--Nirenberg inequality
\begin{equation}\label{optconsg}
   \|f\|_{L^{m+1}(\mathbb{R}^2)}^{m+1} \leq C_{GN} \|(-\Delta)^{\frac{s}{2}}f\|_{L^2(\mathbb{R}^2)}^{\frac{m-1}{s}}\|f\|_{L^2(\mathbb{R}^2)}^{(m+1)-\frac{(m-1)}{s}},
\end{equation}
where the sharp constant $C_{GN}$ is obtained by minimizing the functional
\begin{equation}\label{optconsg1}
   J(f)=\frac{\|(-\Delta)^{\frac{s}{2}}f\|_{L^2(\mathbb{R}^2)}^{\frac{m-1}{s}}\|f\|_{L^2(\mathbb{R}^2)}^{(m+1)-\frac{(m-1)}{s}}}{\|f\|_{L^{m+1}(\mathbb{R}^2)}^{m+1}},
\end{equation}
defined for $f\in H^{s}(\mathbb{R}^2)$ with $f\neq 0$. Thus, one can use concentration-compactness arguments to show that $C_{GN}^{-1}=\inf_{f\neq 0} J(f)$ is attained. Moreover, recalling that $c>0$, $\nu_1<0$ and $\nu_2=1$,  by computing $J'(\cdot)$, it follows that any minimizer $\varphi\in H^{s}(\mathbb{R}^2)$ satisfies the equation \eqref{SWEQ} after a suitable rescaling, and the inequality $J(|\varphi|) \leq J(\varphi)$ implies that the minimizer $\varphi$ can be chosen to be nonnegative (for further properties see \cite[Appendix D]{Frank2016} and the reference therein), concluding the existence part. The uniqueness of the ground state (any nonnegative minimizer $\varphi$ of $J(\cdot)$ is a ground state) was established up to translation (or being radially symmetric and decreasing around some point) in \cite{Frank2016} (for $1d$ case see \cite{Frank2013}). Summarizing we have the following result:  

\begin{theorem}[\cite{Frank2016}]\label{existTHR}
Let $0<s<1$, $c>0$, $\nu_1<0$, $\nu_2=1$ and $1<m<\frac{1+s}{1-s}$. Then the equation \eqref{SWEQ} admits a unique, up to translation, positive solution $\varphi$ in $H^{s}(\mathbb{R}^2)$. Moreover, there exists some $(x_0,y_0)\in \mathbb{R}^2$ such that $\varphi(\cdot-x_0,\cdot-y_0)$ is radial, positive, and strictly decreasing in $|(x-x_0,y-y_0)|$. Additionally, the function $\varphi$ belongs to $H^{2s+1}(\mathbb{R}^2)\cap C^{\infty}(\mathbb{R}^2)$ and it satisfies
 \begin{equation}\label{poldecayGS}
    \frac{C_1}{1+|(x,y)|^{2+2s}} \leq \varphi(x,y) \leq \frac{C_2}{1+|(x,y)|^{2+2s}},
 \end{equation}
for all $(x,y) \in \mathbb{R}^2$, with some constants $C_2 \geq C_1>0$ depending on $m$ and $\varphi$. 
\end{theorem}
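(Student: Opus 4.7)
My plan follows the variational framework for existence already sketched in the paragraph preceding the statement, and then adds the symmetry, uniqueness, regularity, and decay steps.

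For existence and positivity, I would minimize the Weinstein functional $J$ from \eqref{optconsg1} over $H^s(\mathbb{R}^2)\setminus\{0\}$. Since the subcriticality assumption $m<(1+s)/(1-s)$ makes the embedding $H^s\hookrightarrow L^{m+1}$ strictly subcritical, Lions' concentration--compactness applied to a minimizing sequence normalized by the two-parameter scale invariance of $J$ produces (up to translations) a nonzero weak limit $\varphi\in H^s$ that attains the infimum: vanishing is excluded by positivity of $J^{-1}$, and dichotomy by strict subadditivity. The fractional Kato-type inequality $\bigl((-\Delta)^{s/2}|f|\bigr)^2\le\bigl((-\Delta)^{s/2}f\bigr)^2$ yields $J(|\varphi|)\le J(\varphi)$, so $\varphi$ may be taken nonnegative, and the nonlocal strong maximum principle for $((-\Delta)^s+c/|\nu_1|)\varphi=\varphi^m/(m|\nu_1|)\ge 0$ upgrades this to $\varphi>0$. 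After a single rescaling absorbing the Lagrange multipliers, $\varphi$ solves \eqref{SWEQ} for the prescribed $c,\nu_1,\nu_2$. Radial symmetry then follows from the fractional P\'olya--Szeg\H{o} inequality combined with the Riesz rearrangement inequality: writing $\varphi^{\ast}$ for the symmetric decreasing rearrangement, the chain $J(\varphi^{\ast})\le J(\varphi)$ must be equality, and the known equality cases imply that, up to a translation, $\varphi=\varphi^{\ast}$ is radially symmetric and strictly decreasing.

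Uniqueness is the main obstacle. Following Frank--Lenzmann--Silvestre, I would linearize \eqref{SWEQ} at a radial positive ground state and analyze
\begin{equation*}
L_+ \defeq (-\Delta)^s + \frac{c}{|\nu_1|} - \varphi^{m-1},
\end{equation*}
with the goal of proving that its Morse index in the radial sector equals exactly one and that $\ker L_+$ is spanned by the translations $\partial_{x}\varphi,\partial_{y}\varphi$. The argument rests on three ingredients: (a) the Caffarelli--Silvestre extension, which realizes $(-\Delta)^s$ as a Dirichlet-to-Neumann map on $\mathbb{R}^2\times \mathbb{R}_{+}$ and brings standard degenerate-elliptic tools into play; (b) a Perron--Frobenius/oscillation argument in the radial extension that controls sign changes of radial eigenfunctions of $L_+$; and (c) a continuity/implicit-function deformation in the parameter $m\in\bigl(1,(1+s)/(1-s)\bigr)$ that propagates uniqueness from a reference value (e.g.\ via the Pohozaev-type identities for the extension) to the whole parameter range. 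The delicate step is controlling the radial shooting ODE for the extension uniformly across this range.

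For regularity I would bootstrap using the resolvent identity
\begin{equation*}
\varphi \;=\; \frac{1}{m|\nu_1|}\bigl((-\Delta)^s + c/|\nu_1|\bigr)^{-1}\varphi^m,
\end{equation*}
noting that the Fourier multiplier $(|\xi|^{2s}+c/|\nu_1|)^{-1}$ smooths by $2s$ derivatives on the $L^2$-based Sobolev scale; combined with Sobolev embeddings for the nonlinear term $\varphi^m$, finitely many iterations give $\varphi\in H^{2s+1}(\mathbb{R}^2)\cap C^{\infty}(\mathbb{R}^2)$. For the decay bounds \eqref{poldecayGS}, I would exploit the well-known pointwise asymptotic of the Bessel-type Green's kernel $G_c$ associated with $(-\Delta)^s + c/|\nu_1|$ on $\mathbb{R}^2$, namely $G_c(x,y) \sim C(1+|(x,y)|)^{-2-2s}$ as $|(x,y)|\to\infty$. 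Writing $\varphi = \text{const}\cdot G_c\ast\varphi^m$ and splitting the convolution over a large ball containing most of the $L^1$-mass of $\varphi^m$, the decay of $G_c$ transfers to $\varphi$ and yields the upper bound in \eqref{poldecayGS}; the matching lower bound is immediate from the positivity of $G_c$ and $\varphi^m$ and a single use of the far-field lower bound on $G_c$.
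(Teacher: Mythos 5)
This theorem is quoted from Frank--Lenzmann--Silvestre \cite{Frank2016}; the paper itself supplies no proof beyond the paragraph preceding the statement, which sketches only the existence part (minimizing the Weinstein functional $J$ from \eqref{optconsg1}, concentration--compactness, the inequality $J(|\varphi|)\le J(\varphi)$, and a rescaling to pass from the Euler--Lagrange equation to \eqref{SWEQ}). Your treatment of existence, positivity and radial symmetry coincides with that outline, with the rearrangement and maximum-principle details filled in correctly, and your bootstrap for $H^{2s+1}\cap C^\infty$ regularity and the Bessel-kernel convolution argument for the two-sided decay $(1+|(x,y)|^{2+2s})^{-1}$ are exactly the standard arguments behind \eqref{poldecayGS}.

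One concrete inaccuracy in the uniqueness step: the Frank--Lenzmann--Silvestre continuation argument deforms in the \emph{dispersion} parameter $s$, starting from the local case $s=1$, where uniqueness of the positive radial ground state is classical (Kwong-type ODE shooting), and propagates it down to the given $s\in(0,1)$ via nondegeneracy of $L_+$ and the implicit function theorem. You propose instead to deform in the nonlinearity power $m\in\bigl(1,\tfrac{1+s}{1-s}\bigr)$, but there is no value of $m$ in that range at which uniqueness is already known for a genuinely fractional $s$ without first running the whole machinery, so your continuation has no anchor point and would not close as written. The remaining ingredients you list --- nondegeneracy $\ker L_+=\spn\{\partial_x\varphi,\partial_y\varphi\}$, the Caffarelli--Silvestre extension, and the Perron--Frobenius/oscillation control of radial eigenfunctions --- are indeed the right ones.
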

\begin{remark}
For the case when $m=\frac{1+s}{1-s}$, see a corresponding result in \cite{CCO2006},\cite{Li2004}. 
\end{remark}
By rescaling and setting $s=\frac{1}{2}$, $m=2$, Theorem \ref{existTHR} establishes the existence of the unique positive solution $Q$ for the equation \eqref{E:GS}, which we consider later in Sections \ref{S:Soliton} and \ref{S:Numerical solution}.
\smallskip

We next derive the key Pohozaev identities (we use them later in Section \ref{S:Soliton} for the verification of the ground state computations).
 
\begin{lemma}\label{PHident}
Assume $0<s<1$, $c\neq 0$, $m>1$ with $m\neq \frac{1+s}{1-s}$. Let $\varphi$ be a smooth vanishing at infinity solution of \eqref{SWEQ}. Then the following identities hold true
\begin{align}
&\|(-\Delta)^{\frac{s}{2}}\varphi \|_{L^2(\mathbb{R}^2)}^2 =-\frac{c(m-1)}{\nu_1(2-(1-s)(m+1))}\| \varphi \|_{L^2(\mathbb{R}^2)}^2, \label{Phideneq1} \\
& \int_{\mathbb{R}^2} \varphi^{m+1} \, dx dy=\frac{scm(m+1)}{\nu_2(2-(1-s)(m+1))}\| \varphi \|_{L^2(\mathbb{R}^2)}^2. \label{Phideneq2}
\end{align}
As a consequence, when $\nu_1=-1$ and $\nu_2=1$,
\begin{equation}\label{E:Energy-phi}
E[\varphi] = r_c \, \frac{c(m-1)}{(2-(1-s)(m+1))} \| \varphi \|_{L^2(\mathbb R^2)}^2.
\end{equation}
\end{lemma}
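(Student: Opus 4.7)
The plan is the classical two-test-function route: pair the stationary equation \eqref{SWEQ} with $\varphi$ and with the conformal/scaling generator $x\varphi_x+y\varphi_y$, and then solve the resulting $2\times 2$ linear system in the unknowns $\|(-\Delta)^{s/2}\varphi\|_{L^2}^2$ and $\int \varphi^{m+1}$ in terms of $\|\varphi\|_{L^2}^2$.

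First I would multiply \eqref{SWEQ} by $\varphi$ and integrate over $\mathbb{R}^2$. Using that $(-\Delta)^s$ is self-adjoint on $H^{s}(\mathbb{R}^2)$ and that $\varphi$ decays at infinity (see \eqref{poldecayGS}), one obtains
\begin{equation}\label{P1}
-c\|\varphi\|_{L^2}^2+\nu_1\|(-\Delta)^{s/2}\varphi\|_{L^2}^2+\frac{\nu_2}{m}\int_{\mathbb{R}^2}\varphi^{m+1}\,dxdy=0.
\end{equation}
Next I would pair \eqref{SWEQ} against $x\varphi_x+y\varphi_y$. Three computations are needed: (i) $\int \varphi(x\varphi_x+y\varphi_y)\,dxdy=-\|\varphi\|_{L^2}^2$ by integration by parts in each coordinate; (ii) $\int \varphi^m(x\varphi_x+y\varphi_y)\,dxdy=-\frac{2}{m+1}\int\varphi^{m+1}\,dxdy$, again by a direct integration by parts on $\varphi^{m+1}/(m+1)$; and (iii) the fractional identity
\begin{equation*}
\int_{\mathbb{R}^2}((-\Delta)^s\varphi)(x\varphi_x+y\varphi_y)\,dxdy=-(1-s)\|(-\Delta)^{s/2}\varphi\|_{L^2}^2,
\end{equation*}
which follows from the scaling property $(-\Delta)^s[\varphi(\lambda\cdot)](x)=\lambda^{2s}((-\Delta)^s\varphi)(\lambda x)$: differentiating $\lambda\mapsto\lambda^{-2}\int \varphi(\lambda\cdot)(-\Delta)^s\varphi(\lambda\cdot)\,dxdy=\lambda^{2s-2}\|(-\Delta)^{s/2}\varphi\|_{L^2}^2$ at $\lambda=1$ yields precisely this relation. (Equivalently, one can verify it on the Fourier side using $\widehat{x\varphi_x+y\varphi_y}=-(2+\xi\cdot\nabla_\xi)\widehat{\varphi}$ and integration by parts against $|\xi|^{2s}$.) Collecting these, the second identity reads
\begin{equation}\label{P2}
c\|\varphi\|_{L^2}^2-\nu_1(1-s)\|(-\Delta)^{s/2}\varphi\|_{L^2}^2-\frac{2\nu_2}{m(m+1)}\int_{\mathbb{R}^2}\varphi^{m+1}\,dxdy=0.
\end{equation}

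With \eqref{P1} and \eqref{P2} in hand, I would eliminate $\int\varphi^{m+1}$ by forming $(m+1)\cdot\text{(P2)}+2\cdot\text{(P1)}$ suitably (or by direct substitution), which produces the coefficient $2-(1-s)(m+1)$ multiplying $\|(-\Delta)^{s/2}\varphi\|_{L^2}^2$ and the coefficient $m-1$ multiplying $c\|\varphi\|_{L^2}^2$; the hypothesis $m\neq\frac{1+s}{1-s}$ guarantees the former does not vanish, and \eqref{Phideneq1} follows. Plugging this back into \eqref{P1} and simplifying the resulting $c\|\varphi\|_{L^2}^2$ coefficient to $sc(m+1)/[2-(1-s)(m+1)]$ gives \eqref{Phideneq2}. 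Finally, for \eqref{E:Energy-phi}, I would substitute the two Pohozaev identities directly into \eqref{genergy} with $\nu_1=-1$, $\nu_2=1$, factor out $c\|\varphi\|_{L^2}^2/[2-(1-s)(m+1)]$, and recognize $(m-1-2s)/(m-1)=r_c$ from \eqref{CritiInd}.

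The only genuine obstacle is step (iii), the fractional dilation identity, because the multiplication by $x$ and $y$ clashes with the nonlocality of $(-\Delta)^s$. The scaling derivation above sidesteps this cleanly provided $\varphi$ is regular enough and decays fast enough for all boundary terms to vanish, which is ensured by \eqref{poldecayGS} together with $\varphi\in H^{2s+1}\cap C^{\infty}$; everything else is routine integration by parts and algebra.
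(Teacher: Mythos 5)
Your proposal is correct and follows essentially the same route as the paper: pair \eqref{SWEQ} with $\varphi$ and with the dilation generator $x\varphi_x+y\varphi_y$, then solve the resulting $2\times 2$ linear system; the only difference is that you obtain the key identity $\int_{\mathbb{R}^2}\big((-\Delta)^s\varphi\big)(x\varphi_x+y\varphi_y)\,dxdy=-(1-s)\|(-\Delta)^{\frac{s}{2}}\varphi\|_{L^2(\mathbb{R}^2)}^2$ by a dilation/Fourier argument, whereas the paper derives it from the commutator $\big[(-\Delta)^{\frac{s}{2}};x\big]\partial_x=-s(-\Delta)^{\frac{s-2}{2}}\partial_x^2$ applied to $x\varphi_x$ and $y\varphi_y$ separately --- two equivalent one-line Fourier computations. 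Two small points to fix: the scaling relation should read $\|(-\Delta)^{\frac{s}{2}}[\varphi(\lambda\cdot)]\|_{L^2}^2=\lambda^{2s-2}\|(-\Delta)^{\frac{s}{2}}\varphi\|_{L^2}^2$ (your extra $\lambda^{-2}$ prefactor is spurious, though the Fourier-side check you also give is correct and the resulting identity is right); and carrying out your final substitution of \eqref{Phideneq1}--\eqref{Phideneq2} into \eqref{genergy} with $\nu_1=-1$, $\nu_2=1$ actually yields $E[\varphi]=\tfrac{r_c}{2}\,\frac{c(m-1)}{2-(1-s)(m+1)}\|\varphi\|_{L^2(\mathbb{R}^2)}^2$, i.e.\ half of \eqref{E:Energy-phi} as printed (the discrepancy is invisible in the $L^2$-critical case, where both sides vanish), so either the constant in \eqref{E:Energy-phi} or your claim that the substitution ``gives \eqref{E:Energy-phi}'' needs a factor of $\tfrac12$.
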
 
By ``smooth", we mean that the functions have sufficient regularity to justify the arguments below in the proof of Lemma \ref{PHident}. By setting $\nu_1<0$, $\nu_2=1$, $c>0$ and $1<m<\frac{1+s}{1-s}$, the conclusion of Theorem \ref{existTHR} determines the existence of a solution for the equation \eqref{SWEQ}, which satisfies the required assumptions of regularity and decay to verify Lemma \ref{PHident}. 

\begin{remark}
For the specific case of the equation \eqref{SWEQ} with  $s=\frac{1}{2}$, $m=2$, $c=1$, $\nu_1=-1$, and $\nu_2=1$, i.e., the equation  \eqref{E:GS}, the identities  \eqref{Phideneq1}, \eqref{Phideneq2} and \eqref{E:Energy-phi} reduce to 
\begin{align}
\|(-\Delta)^{\frac{1}{4}}\varphi \|_{L^2(\mathbb{R}^2)}^2 & =2\| \varphi \|_{L^2(\mathbb{R}^2)}^2, \label{Phideneq1.1} \\
 \int_{\mathbb{R}^2} \varphi^{3} \, dx dy & =6\| \varphi \|_{L^2(\mathbb{R}^2)}^2, \label{Phideneq2.1} \\
\qquad E[\varphi] & = 0, \label{PhiE}
\end{align}
which we use to verify the accuracy of our computation for the ground state  (see $e_1,e_2, e_3$) in Section \ref{S:Soliton} below. 
\end{remark}

\begin{proof}[Proof of Lemma \ref{PHident}]

Multiplying \eqref{SHBO-IVP} by $\varphi$ and integrating on $\mathbb{R}^2$, yields
\begin{equation}\label{SW0}
   \int \Big(-c\varphi^2+\nu_1 (-\Delta)^{s}\varphi \,  \varphi +\frac{\nu_2}{m}\varphi^{m+1} \Big) \, dx dy=0.
\end{equation}
On the other hand, we claim 
\begin{align}
\int (-\Delta)^{s}\varphi \big(x \varphi_x\big) \, dx dy=-s\int (-\Delta)^{s-1}\partial_{x}^2\varphi \varphi \, dx dy -\frac{1}{2}\int(-\Delta)^{s}\varphi \varphi \,dx dy, \label{SW0.1} \\ 
\int (-\Delta)^{s}\varphi \big(y \varphi_y\big) \, dx dy=- s\int (-\Delta)^{s-1}\partial_{y}^2\varphi \varphi \, dx dy -\frac{1}{2}\int (-\Delta)^{s}\varphi \varphi \,dxdy.  \label{SW0.2}
\end{align}   
We only show \eqref{SW0.1} as the same reasoning leads to \eqref{SW0.2}. We write the left-hand side of \eqref{SW0.1} as follows 
\begin{equation*}
\begin{aligned}
\int (-\Delta)^{s}\varphi \big(x \varphi_x\big) \, dx dy&=\int (-\Delta)^{\frac{s}{2}}\varphi (-\Delta)^{\frac{s}{2}}(x \varphi_x) \, dx dy\\
&=\int (-\Delta)^{\frac{s}{2}}\varphi [(-\Delta)^{\frac{s}{2}};x]\varphi_x \, dx dy+\int (-\Delta)^{\frac{s}{2}}\varphi \big( x (-\Delta)^{\frac{s}{2}}\varphi_x \big) \, dx dy\\
&=\int (-\Delta)^{\frac{s}{2}}\varphi [(-\Delta)^{\frac{s}{2}};x]\varphi_x \, dx dy-\frac{1}{2}\int (-\Delta)^{s}\varphi \varphi  \, dx dy,
\end{aligned}
\end{equation*}
where we use the notation $[A;B]=AB-BA$ for given operators $A$, $B$, and the last term of the above identity is obtained after integration by parts and using that $(-\Delta)^{\frac{s}{2}}$ determines a symmetric operator. The proof of \eqref{SW0.1} is now a consequence of the identity
\begin{equation}
\big[(-\Delta)^{\frac{s}{2}};x\big]\partial_x f=-s(-\Delta)^{\frac{s-2}{2}}\partial_x^2f,
\end{equation}
which follows by computing the Fourier transform of the commutator. Note that $$\|(-\Delta)^{\frac{s-2}{2}}\partial_x^2 f\|_{L^2(\mathbb{R}^2)}\leq \|(-\Delta)^{\frac{s}{2}}f\|_{L^2(\mathbb{R}^2)}.$$
This establishes \eqref{SW0.1} and \eqref{SW0.2}.
   
Next, multiplying \eqref{SWEQ} by $x \varphi_{x}$ and using \eqref{SW0.1}, we get
\begin{equation}\label{SW0.3}
\begin{aligned}
\int \Big( c\varphi^2-2s\nu_1 (-\Delta)^{s-1}\partial_x^2 \varphi \varphi-\nu_1 (-\Delta)^{s} \varphi \varphi-\frac{2\nu_2}{m(m+1)}\varphi^{m+1} \Big)\, dx dy.
\end{aligned}
\end{equation}
Likewise, \eqref{SW0.2} yields
\begin{equation}\label{SW0.4}
\begin{aligned}
\int \Big( c\varphi^2-2s\nu_1 (-\Delta)^{s-1}\partial_y^2 \varphi \varphi-\nu_1 (-\Delta)^{s} \varphi \varphi-\frac{2\nu_2}{m(m+1)}\varphi^{m+1} \Big)\, dx dy.
\end{aligned}
\end{equation}
By adding \eqref{SW0.3} and \eqref{SW0.4}, and using that $\partial_x^2+\partial_y^2=\Delta$, we deduce
\begin{equation}\label{SW0.5}
\begin{aligned}
\int \Big( c\varphi^2+\nu_1 (s-1)(-\Delta)^{s} \varphi \varphi-\frac{2\nu_2}{m(m+1)}\varphi^{m+1}\Big) \, dx dy.
\end{aligned}
\end{equation}
Consequently, solving \eqref{SW0.5} and \eqref{SW0} yields the desired expressions in  \eqref{Phideneq1} and \eqref{Phideneq2}. 
\end{proof}
\begin{remark}
We notice that setting $c=0$ in \eqref{SWEQ} and inspecting the above proof of Lemma \ref{nonexsw},  the equation \eqref{SW0.3} must be equal to \eqref{SW0.4}. This imposes the condition $m=\frac{1+s}{1-s}$, and hence, for this case, we get the identity
\begin{equation}\label{SW0.6} 
\begin{aligned}
-\nu_1\|(-\Delta)^{\frac{s}{2}}\varphi\|_{L^2(\mathbb{R}^2)}^2=\nu_2\bigg(\frac{1-s}{1+s}\bigg)\int_{\mathbb{R}^2} \varphi^{\frac{2}{1-s}} \, dx dy.
\end{aligned}
\end{equation}
Note that $m=\frac{1+s}{1-s}$ is the energy-critical case for solutions of \eqref{SHBO-IVP}, i.e., $r_{c}=s$ in \eqref{CritiInd}). The function $\varphi$ that gives the equality in \eqref{SW0.6} is exactly a minimizer for the fractional Sobolev inequality (and the sharp constant that can be obtained from \eqref{SW0.6}), see more on that in \cite{CCO2006}, \cite{Li2004}.
\end{remark}
As a direct consequence of the Pohozaev identities deduced in Lemma \ref{PHident} and \eqref{SW0.6}, we establish the following non-existence criteria for solutions of the equation \eqref{SWEQ}.
\begin{proposition}\label{nonexsw}
The equation \eqref{SHBO-IVP} can not have a smooth non-trivial vanishing at infinity solitary-wave solution unless either one of the following holds
\begin{itemize}
 \item[(i)] $\nu_1<0$, $c>0$, $1<m<\frac{1+s}{1-s}$, 
   \item[(ii)] $\nu_1>0$, $c<0$, $1<m<\frac{1+s}{1-s}$,
   \item[(iii)] $\nu_1>0$, $c>0$, $m>\frac{1+s}{1-s}$, 
   \item[(iv)] $\nu_1<0$, $c<0$, $m>\frac{1+s}{1-s}$, 
   \item[(v)] $m>1$ is an odd integer, $\nu_2 c>0$, and either (i) and (ii) hold, 
   \item[(vi)] $m>1$ is an odd integer, $\nu_2c<0$, and either (iii) and (iv) hold, or
  \item[(vii)] $m=\frac{1+s}{1-s}$ is an odd integer, $c=0$, and $\nu_1\nu_2<0$.
\end{itemize}
\end{proposition}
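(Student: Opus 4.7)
The plan is to exploit the Pohozaev identities of Lemma \ref{PHident}, together with the critical-exponent identity \eqref{SW0.6}, as sign constraints on the tuple $(c,\nu_1,\nu_2,m,s)$. Let $\varphi$ be any non-trivial smooth vanishing-at-infinity solution of \eqref{SWEQ}, and abbreviate $A=\|\varphi\|_{L^2(\mathbb R^2)}^2$, $B=\|(-\Delta)^{s/2}\varphi\|_{L^2(\mathbb R^2)}^2$, and $\alpha=2-(1-s)(m+1)$. A short Fourier argument gives $A>0$ and $B>0$ (otherwise $\varphi\equiv 0$), and algebraically $\alpha=-(m-1-s(m+1))$, so $\alpha=0$ precisely when $m=\tfrac{1+s}{1-s}$.

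In the generic case $c\neq 0$ and $\alpha\neq 0$, I would first apply \eqref{Phideneq1}: since $B>0$, one needs $-c(m-1)/(\nu_1\alpha)>0$, i.e.\ $c/(\nu_1\alpha)<0$. Splitting on the sign of $\alpha$ then yields precisely (i)--(iv): $\alpha>0$ (the subcritical range $m<\tfrac{1+s}{1-s}$) forces $c$ and $\nu_1$ to have opposite signs, giving (i) or (ii), whereas $\alpha<0$ (the supercritical range) forces them to share a sign, giving (iii) or (iv). When $m$ is in addition an odd integer, $\varphi^{m+1}\ge 0$ pointwise and non-triviality gives $\int \varphi^{m+1}\, dx dy > 0$; the second identity \eqref{Phideneq2} then contributes $c/(\nu_2\alpha)>0$, which combined with the sign of $\alpha$ sharpens (i)/(ii) into (v) and (iii)/(iv) into (vi).

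To close the remaining degenerate cases I would first establish the equivalence $\alpha=0 \Leftrightarrow c=0$ under the standing assumption of non-trivial $\varphi$. Revisiting the proof of Lemma \ref{PHident}, combining \eqref{SW0} with \eqref{SW0.5} and eliminating $\int \varphi^{m+1}\, dx dy$ yields the pre-division identity $\nu_1\alpha B=-c(m-1)A$; since $A,B>0$, this forces $c$ and $\alpha$ to vanish together. Hence the only remaining configuration is $c=0$ with $m=\tfrac{1+s}{1-s}$; here I invoke \eqref{SW0.6}, whose left-hand side has sign $-\nu_1$ and whose right-hand side, when $m$ is an odd integer so that $\varphi^{m+1}\ge 0$, has sign $\nu_2$. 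Matching signs forces $\nu_1\nu_2<0$, which is exactly (vii). The main technical hurdle is precisely this bookkeeping at $\alpha=0$, where Lemma \ref{PHident} does not apply as stated: one must work directly with the pre-division identities \eqref{SW0}, \eqref{SW0.5}, and \eqref{SW0.6}, and verify that no additional exceptional configurations are overlooked.
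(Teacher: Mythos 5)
Your argument is correct and follows the same route as the paper: Proposition \ref{nonexsw} is presented there as a direct consequence of the Pohozaev identities \eqref{Phideneq1}--\eqref{Phideneq2} and of \eqref{SW0.6}, read as sign constraints exactly as you read them, with the odd-integer cases (v)--(vii) coming from the positivity of $\int\varphi^{m+1}\,dxdy$. Your explicit check that $c=0$ and $2-(1-s)(m+1)=0$ must occur together, via the pre-division identity $\nu_1\bigl(2-(1-s)(m+1)\bigr)\|(-\Delta)^{s/2}\varphi\|_{L^2}^2=-c(m-1)\|\varphi\|_{L^2}^2$, is a detail the paper only hints at in the remark following Lemma \ref{PHident}.
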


\begin{remark}\label{remarothereq}
\begin{enumerate}
\item 
When $m>1$ is an odd integer, we know that $\varphi^{m+1}=|\varphi|^{m+1}\geq 0$, thus, we can use \eqref{Phideneq2} to obtain restrictions on the sign of $\nu_2$. This is exactly (v)-(vii) in Proposition \ref{nonexsw}. This explains why we included the parameter $\nu_2$ in \eqref{SHBO-IVP} (in part to distinguish the $\int \varphi^{m+1} \, dx dy$ integral from the $L^{m+1}$-norm $\int |\varphi|^{m+1} \, dx dy$). 

\item 
For the case of the nonlinearity in \eqref{E:AfZK}, the solitary wave solution of the form $u(x,y,t)=\phi(x-ct,y)$ yields the equation 
\begin{equation}\label{alterGSE}
   -c\phi+\nu_1 (-\Delta)^{s}\phi+\frac{\nu_2}{m}|\phi|^{m-1}\phi=0.
\end{equation}
Then, replacing $\varphi$ by $\phi$, and $\varphi^{m+1}$ by $|\phi|^{m+1}$, the estimate \eqref{Phideneq1} holds 
for solutions of \eqref{alterGSE}, and in this case \eqref{Phideneq2} becomes 
\begin{equation*}
\| \phi\|_{L^{m+1}(\mathbb{R}^2)}^{m+1}=\frac{scm(m+1)}{\nu_2(2-(1-s)(m+1))}\| \phi \|_{L^2(\mathbb{R}^2)}^2.
\end{equation*}
Furthermore, the Prop. \ref{nonexsw} is modified as follows:
\end{enumerate}
\end{remark}
\begin{proposition}\label{nonexsw-2}
The equation \eqref{alterGSE} can not have a smooth non-trivial vanishing at infinity solution unless either one of the following holds
\begin{itemize}
 \item[\textbullet] $\nu_1<0$, $\nu_2=1$, $c>0$, $1<m<\frac{1+s}{1-s}$, 
   \item[\textbullet] $\nu_1>0$, $\nu_2=-1$ $c<0$, $1<m<\frac{1+s}{1-s}$,
   \item[\textbullet] $\nu_1>0$, $\nu_2=-1$, $c>0$, $m>\frac{1+s}{1-s}$, or
   \item[\textbullet] $\nu_1<0$, $c<0$, $\nu_2=1$, $m>\frac{1+s}{1-s}$,
   \item[\textbullet] $c=0$, $\nu_1 \nu_2<0$, $m=\frac{1+s}{1-s}$. 
\end{itemize}

\end{proposition}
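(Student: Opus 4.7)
The proof follows the same strategy as that of Proposition \ref{nonexsw}, with mild modifications to handle the nonlinearity $|\phi|^{m-1}\phi$ in place of $\phi^m$. First I would establish the analogues of the Pohozaev identities of Lemma \ref{PHident} for a smooth vanishing-at-infinity solution $\phi$ of \eqref{alterGSE}:
\begin{align*}
\|(-\Delta)^{\frac{s}{2}}\phi\|_{L^2(\mathbb{R}^2)}^2 &= -\frac{c(m-1)}{\nu_1(2-(1-s)(m+1))}\|\phi\|_{L^2(\mathbb{R}^2)}^2, \\
\|\phi\|_{L^{m+1}(\mathbb{R}^2)}^{m+1} &= \frac{scm(m+1)}{\nu_2(2-(1-s)(m+1))}\|\phi\|_{L^2(\mathbb{R}^2)}^2.
\end{align*}
These follow by multiplying \eqref{alterGSE} successively by $\phi$ and by $x\phi_x+y\phi_y$, integrating over $\mathbb{R}^2$, and invoking the commutator identities \eqref{SW0.1}--\eqref{SW0.2} exactly as in the proof of Lemma \ref{PHident}. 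The only genuine change occurs in the nonlinear term: one writes $|\phi|^{m-1}\phi\,(x\phi_x+y\phi_y)=\frac{1}{m+1}\nabla(|\phi|^{m+1})\cdot(x,y)$, and integration by parts yields $-\frac{2}{m+1}\|\phi\|_{L^{m+1}(\mathbb{R}^2)}^{m+1}$ in place of the $\varphi^{m+1}$ term appearing in Lemma \ref{PHident}.

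Next I would finish by a case analysis on the sign of the factor $2-(1-s)(m+1)$. For any nontrivial smooth decaying $\phi$, both $\|(-\Delta)^{\frac{s}{2}}\phi\|_{L^2}^2$ and $\|\phi\|_{L^{m+1}}^{m+1}$ are strictly positive, so the right-hand sides of both identities above must be strictly positive. Since $m>1$ and $s\in(0,1)$, the quantity $2-(1-s)(m+1)$ is positive iff $m<\frac{1+s}{1-s}$ and negative iff $m>\frac{1+s}{1-s}$. Enumerating the sign patterns of $(\nu_1,\nu_2,c)$ compatible with positivity of both right-hand sides in each of these two regimes yields exactly the first four bullets of the proposition.

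For the borderline case $m=\frac{1+s}{1-s}$ the two displayed identities degenerate (both denominators vanish), so I would argue instead as in the remark following Lemma \ref{PHident}: combining the mass identity (from testing \eqref{alterGSE} against $\phi$) with the sum of the two moment identities (the analogue of \eqref{SW0.5}) forces $c=0$ and reduces to the analogue of \eqref{SW0.6}, namely
\begin{equation*}
-\nu_1\|(-\Delta)^{\frac{s}{2}}\phi\|_{L^2(\mathbb{R}^2)}^2 = \nu_2\cdot\frac{1-s}{1+s}\,\|\phi\|_{L^{m+1}(\mathbb{R}^2)}^{m+1}.
\end{equation*}
Positivity of both norms then forces $\nu_1\nu_2<0$, giving the last bullet.

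I expect this to be a mostly routine adaptation, and the main (minor) technical point is just to justify the integration-by-parts and commutator manipulations, which are identical to those of Lemma \ref{PHident}. A notable simplification compared to Proposition \ref{nonexsw} is that no odd-integer restriction on $m$ is needed here: since $|\phi|^{m+1}\ge 0$ automatically, the auxiliary cases (v)--(vii) of Proposition \ref{nonexsw} collapse into the five bullets stated above.
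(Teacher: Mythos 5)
Your proposal is correct and follows essentially the same route as the paper: the paper itself disposes of this proposition in Remark \ref{remarothereq}(2) by noting that the Pohozaev identities of Lemma \ref{PHident} carry over with $\varphi^{m+1}$ replaced by $|\phi|^{m+1}$ (so that \eqref{Phideneq2} becomes an identity for $\|\phi\|_{L^{m+1}}^{m+1}>0$), after which the sign analysis in the two regimes $m\lessgtr\frac{1+s}{1-s}$, together with the degenerate case $m=\frac{1+s}{1-s}$ handled via the analogue of \eqref{SW0.6}, yields exactly the five bullets. Your observation that the automatic positivity of $\int|\phi|^{m+1}$ removes the odd-integer caveats (v)--(vii) of Proposition \ref{nonexsw} is precisely the point the authors make when explaining why they introduced $\nu_2$.
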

\smallskip

\subsection{Uniform bounds (conditional global existence)}\label{S:bounds}
For our next result, we first find an explicit relation between the sharp constant and the solution of \eqref{optconsg}. Let $\varphi > 0$ be a local minimizer of the functional $J$ defined in \eqref{optconsg1}. Then $J'(\varphi)=0$ implies
\begin{equation}\label{miniequ}
\Big( (m+1)-\frac{(m-1)}{s} \Big)\, c_1\varphi+\frac{(m-1)}{s}(-\Delta)^{s}\varphi-c_2\varphi^{m}=0
\end{equation}
for some (specific) positive constants $c_1$ and $c_2$. By \eqref{miniequ}, and using similar arguments as in the proof of Proposition \ref{nonexsw}, we deduce the following identities
\begin{align}
&\int \left[ \Big( (m+1)-\frac{(m-1)}{s} \Big) \, c_1 \varphi^2+\frac{(m-1)}{s}|(-\Delta)^{\frac{s}{2}}\varphi|^2-c_2\varphi^{m+1} \right] \, dx dy=0, \label{eqminiequ} \\
&\int \left[ \Big( (m+1)-\frac{(m-1)}{s} \Big) \,c_1\varphi^2+\frac{(m-1)(1-s)}{s}|(-\Delta)^{\frac{s}{2}}\varphi|^2-\frac{2c_2}{m+1}\varphi^{m+1} \right] \, dx dy=0.  \label{eqminiequ1}
\end{align}
Combining \eqref{eqminiequ} and \eqref{eqminiequ1}, it is seen that
\begin{align}
& \|(-\Delta)^{\frac{s}{2}}\varphi\|_{L^2(\mathbb{R}^2)}^2 =c_1 \| \varphi \|_{L^2(\mathbb{R}^2)}^2,  \label{eqminiequ1.1} \\
&\|\varphi\|_{L^{m+1}(\mathbb{R}^2)}^{m+1}=\frac{c_1(m+1)}{c_2} \| \varphi \|_{L^2(\mathbb{R}^2)}^2. \label{eqminiequ2}
\end{align}
Setting $\beta_1^{m-1}=\frac{c_2sm}{(s(m+1)-(m-1))c_1}$ and $\beta_2^{2s}=\frac{(m-1)}{(s(m+1)-(m-1))c_1}$, we find that $Q_{s,m}(x,y)=\beta_1 \varphi(\beta_2x,\beta_2y)$ solves 
\begin{align}\label{E:GGS}
Q_{s,m}+(-\Delta)^{s}Q_{s,m}-\frac{1}{m}Q^{m}_{s,m}=0.
\end{align}
Note that from \eqref{poldecayGS} it follows that $|Q_{s,m}(x,y)| \sim \frac{1}{1+|(x,y)|^{2+2s}}$ for all $1<m <\frac{1+s}{1-s}$. Since $C_{GN}=\frac{1}{J(\varphi)}$, \eqref{eqminiequ1.1} and \eqref{eqminiequ2} imply  
\begin{equation}\label{eqminiequ4}
C_{GN}=\frac{ms(m+1)}{(m-1)^{\frac{m-1}{2s}}\Big(s(m+1)-(m-1)\Big)^{\frac{2s-(m-1)}{2s}}}\frac{1}{\|Q_{s,m}\|_{L^2(\mathbb{R}^2)}^{m-1}}.
\end{equation}
Summarizing the previous discussion, we obtain the following result.
\begin{proposition}
Let $0<s<1$, $f\in H^{s}(\mathbb{R}^2)$, then $f\in L^{m+1}(\mathbb{R}^2)$ for any $1<m<\frac{1+s}{1-s}$, and there is a constant $C_{GN}$ such that \eqref{optconsg} holds true. Moreover, the sharp constant for which this inequality is valid is given by \eqref{eqminiequ4} with $Q_{s,m}$ being a ground state solution of \eqref{E:GGS}.
\end{proposition}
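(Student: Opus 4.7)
The statement is essentially a synthesis of three ingredients, two of which appear in the discussion immediately preceding it, so my plan is largely one of assembly, with one genuine analytical input needed.

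First I would establish the embedding claim, namely that $H^s(\mathbb R^2) \hookrightarrow L^{m+1}(\mathbb R^2)$ for $1<m<\frac{1+s}{1-s}$. The cutoff $m+1 < \frac{2}{1-s}$ is precisely the fractional Sobolev embedding threshold in dimension $2$, obtained from $H^s \hookrightarrow L^{2/(1-s)}$ (for $s<1$) combined with $L^2 \cap L^{2/(1-s)} \hookrightarrow L^p$ for $p$ in between via interpolation; so the embedding follows immediately, and in particular $f\in L^{m+1}$ whenever $f\in H^s$.

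Next I would prove the Gagliardo--Nirenberg inequality itself. The fastest route is a rescaling-plus-interpolation argument: for any $f\in H^s$ nonzero set $f_\lambda(x,y)=f(\lambda x,\lambda y)$ and compute both sides; homogeneity forces the exponents exhibited in \eqref{optconsg}, and nondegeneracy of the $L^2$--$\dot H^s$ endpoint interpolation then gives a finite constant. Equivalently, one may interpolate $\|f\|_{L^{m+1}}$ between $\|f\|_{L^2}$ and $\|f\|_{L^{2/(1-s)}}$ and bound the latter by $\|(-\Delta)^{s/2}f\|_{L^2}$ via Sobolev embedding. This gives existence of \emph{some} finite $C_{GN}$; set $C_{GN}^{-1}=\inf_{f\neq 0}J(f)$ with $J$ as in \eqref{optconsg1}.

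The main analytical step, and the one that is not routine, is to show this infimum is attained and to identify the minimizer up to the natural symmetries. Here I would invoke Lions' concentration--compactness principle applied to a minimizing sequence normalized so that $\|f_n\|_{L^2}=\|(-\Delta)^{s/2}f_n\|_{L^2}=1$: vanishing is ruled out because $\|f_n\|_{L^{m+1}}$ would tend to zero while bounded below, and dichotomy is ruled out by the strict subadditivity of the minimization levels, which follows from the homogeneity of $J$. Hence (up to translation) the sequence is tight, and by weak $H^s$ lower semicontinuity of the numerator together with the Rellich-type compactness for the subcritical $L^{m+1}$ norm on bounded sets, a minimizer $\varphi\in H^s(\mathbb R^2)\setminus\{0\}$ exists. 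Replacing $\varphi$ by $|\varphi|$ does not increase $J$, so we may take $\varphi\ge 0$; from \cite{Frank2016} (quoted above as Theorem \ref{existTHR}), $\varphi$ is then unique up to translation, radial and decreasing.

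Finally I would identify the sharp constant. Any minimizer satisfies the Euler--Lagrange equation \eqref{miniequ} for appropriate positive constants $c_1,c_2$, and the two Pohozaev-type relations \eqref{eqminiequ}--\eqref{eqminiequ1} (derived exactly as in Lemma \ref{PHident}) reduce to \eqref{eqminiequ1.1} and \eqref{eqminiequ2}. Substituting these into $J(\varphi)$ gives
\[
C_{GN}^{-1}=J(\varphi)=\frac{c_1^{(m-1)/(2s)}\,c_2}{(m+1)\,c_1}\,\|\varphi\|_{L^2}^{\,-(m-1)}.
\]
The rescaling $Q_{s,m}(x,y)=\beta_1\varphi(\beta_2 x,\beta_2 y)$ with $\beta_1,\beta_2$ chosen as in the preceding paragraph turns the Euler--Lagrange equation into \eqref{E:GGS}, and a direct change of variables expresses $\|\varphi\|_{L^2}^{m-1}$ in terms of $\|Q_{s,m}\|_{L^2}^{m-1}$; arranging constants yields exactly \eqref{eqminiequ4}. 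The hardest (least automatic) step is the compactness in the minimization, specifically verifying strict subadditivity to rule out dichotomy; everything else is algebra and bookkeeping built on top of Lemma \ref{PHident} and Theorem \ref{existTHR}.
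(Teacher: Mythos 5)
Your proposal follows essentially the same route as the paper: Weinstein's variational characterization of $C_{GN}$ via the functional $J$, attainment of the infimum by concentration--compactness, the Euler--Lagrange equation \eqref{miniequ} with the Pohozaev-type identities \eqref{eqminiequ}--\eqref{eqminiequ1} reducing to \eqref{eqminiequ1.1}--\eqref{eqminiequ2}, and the rescaling to $Q_{s,m}$ solving \eqref{E:GGS} to extract \eqref{eqminiequ4} (the paper simply cites \cite{Frank2016} and summarizes the preceding computation rather than spelling out the compactness step you elaborate). One minor slip: in your displayed formula for $J(\varphi)$ the correct exponent is $\|\varphi\|_{L^2}^{\,m-1}$, not $\|\varphi\|_{L^2}^{\,-(m-1)}$, since it is $C_{GN}=1/J(\varphi)$ that carries the negative power appearing in \eqref{eqminiequ4}.
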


The Gagliardo--Nirenberg inequality \eqref{optconsg} is also convenient to obtain uniform bounds for solutions of \eqref{SHBO-IVP} in each regime of criticality established by \eqref{CritiInd}. Before, we recall that by a standard parabolic regularization argument, for integer powers $m>1$ and a given $u_0 \in H^{r}(\mathbb{R}^2)$, $r>2$, there exist a time $T>0$ and a unique solution $u\in C([0,T);H^r(\mathbb{R}))$ of the initial value problem associated to \eqref{SHBO-IVP} such that $u(0)=u_0$. We use this existence result to formulate the following proposition.

\begin{proposition}\label{propcritiFir}
Let $0<s<1$, $m>1$ be an odd integer and $\nu_1\nu_2>0$. Consider $u_0\in H^{r}(\mathbb{R}^2)$, $r>2$. Then the solution $u\in C([0,T);H^r(\mathbb{R}^{2}))$ of the initial value problem associated to \eqref{SHBO-IVP} with initial data $u_0$ is uniformly bounded in $H^{s}(\mathbb{R}^2)$ for any $t\in [0,T)$. 
\end{proposition}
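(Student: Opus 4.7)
The plan is to exploit the fact that under the hypotheses $m$ odd and $\nu_1\nu_2>0$, the conserved energy $E_s[u(t)]$ in \eqref{genergy} is the sum of two nonnegative terms (i.e. the equation is in its defocusing regime), so that each term is controlled by the initial energy alone, and no Gagliardo--Nirenberg interpolation is needed.

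First I would note that since $m$ is an odd integer, $m+1$ is even and therefore $u(x,y,t)^{m+1} = |u(x,y,t)|^{m+1}\ge 0$ pointwise. Combined with the assumption $\nu_1\nu_2>0$, this gives
\begin{equation*}
\frac{\nu_2}{\nu_1 m(m+1)}\int_{\mathbb{R}^2} u(x,y,t)^{m+1}\,dxdy\;\ge\;0.
\end{equation*}
Hence from \eqref{genergy} we immediately deduce the pointwise-in-time bound
\begin{equation*}
\frac{1}{2}\,\|(-\Delta)^{\frac{s}{2}}u(t)\|_{L^2(\mathbb{R}^2)}^2 \;\le\; E_s[u(t)] \;=\; E_s[u_0],
\end{equation*}
valid for every $t\in [0,T)$.

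Next I would invoke the standing local existence result in $H^r(\mathbb{R}^2)$, $r>2$, cited just before the proposition, to justify that $u(t)\in H^r \hookrightarrow H^s$ is smooth enough for both conserved quantities to actually be conserved along the flow (one can derive both conservation laws by multiplying \eqref{SHBO-IVP} by $u$ and by $-(-\Delta)^s u + \frac{\nu_2}{\nu_1 m}u^m$ respectively, integrating over $\mathbb{R}^2$, and using the skew-adjointness of $\partial_x(-\Delta)^s$; the regularity $r>2$ ensures all integrations by parts are legitimate). Then mass conservation \eqref{E:mass} gives $\|u(t)\|_{L^2} = \|u_0\|_{L^2}$.

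Combining the two bounds yields
\begin{equation*}
\|u(t)\|_{H^s(\mathbb{R}^2)}^2 \;=\; \|u(t)\|_{L^2(\mathbb{R}^2)}^2 + \|(-\Delta)^{\frac{s}{2}}u(t)\|_{L^2(\mathbb{R}^2)}^2 \;\le\; \|u_0\|_{L^2(\mathbb{R}^2)}^2 + 2\,E_s[u_0],
\end{equation*}
which is the claimed uniform bound on $[0,T)$. There is essentially no obstacle here: the content of the proposition is the observation that the sign conditions $m$ odd and $\nu_1\nu_2>0$ make the Hamiltonian coercive on $H^s$, so that both kinetic and potential parts of the energy are individually controlled. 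The more delicate cases, where the potential term has the opposite sign and Gagliardo--Nirenberg with the sharp constant \eqref{eqminiequ4} must be invoked to absorb it into the kinetic part, are what the author treats separately in Theorem \ref{propcriti}.
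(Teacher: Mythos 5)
Your proof is correct and follows essentially the same route as the paper: since $m$ odd makes $u^{m+1}=|u|^{m+1}\ge 0$ and $\nu_1\nu_2>0$ makes its coefficient in \eqref{genergy} positive, the kinetic term is bounded by $2E_s[u_0]$, which together with mass conservation gives the uniform $H^s$ bound. The extra remarks on justifying the conservation laws for $r>2$ data are a harmless (and reasonable) addition that the paper leaves implicit.
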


When $m>1$ is an odd integer number and $\nu_1\nu_2>0$, the proof of the above proposition is a direct consequence of the energy \eqref{genergy} and the $L^2$ conservation law as follows 
\begin{equation*}
\begin{aligned}
\frac{1}{2}\|(-\Delta)^{\frac{s}{2}}u(t)\|_{L^2(\mathbb{R}^2)}^2&=E_s[u_0]-\frac{\nu_2}{\nu_1m(m+1)}\|u\|_{L^{m+1}(\mathbb{R}^2)}^{m+1} \\
&\leq E_s[u_0].
\end{aligned}
\end{equation*}
\begin{remark}
If the nonlinearity in \eqref{SHBO-IVP} is modified by $\partial_{x}(|u|^{m-1} u)$ as in \eqref{E:AfZK}, then the above proposition holds for any $m>1$ for the modified equation.
\end{remark}

To obtain a similar result to Proposition \ref{propcritiFir} in the case when $\nu_1 \nu_2<0$, we require the existence of solutions to \eqref{E:GGS}. For simplicity, we consider $\nu_1=-1$ and $\nu_2=1$ in \eqref{SHBO-IVP}, that is, the two-dimensional model in \eqref{E:fZK}.

\begin{theorem}\label{propcriti}
Let $0<s<1$ and $1<m\leq \frac{1+s}{1-s}$ be an integer\footnote{If the nonlinearity in the equation is modified to $\partial_x(|u|^{m-1}u)$, then any value $m \in (1,\frac{1+s}{1-s} ]$ can be considered, conditional on the local well-posedness.}. Consider $u_0\in H^{r}(\mathbb{R}^2)$, $r>2$, and $u\in C([0,T); H^r(\mathbb{R}^{2}))$ be the solution of the initial value problem associated to \eqref{E:fZK} with initial data $u_0$. 
\begin{itemize}
\item[(C1)] 
Assume $1<m<2s+1$. Then the solution $u(t)$ is uniformly bounded in $H^{s}(\mathbb{R}^2)$ for any $t\in [0,T)$.
\item[(C2)] 
Assume  $m=2s+1$ and
\begin{equation*}
\|u_0\|_{L^2(\mathbb{R}^2)} <\|Q_{s,m}\|_{L^2(\mathbb{R}^2)},
\end{equation*}
where $Q_{s,m}$ is the ground state solution of \eqref{E:GGS}. Then the solution $u(t)$ is uniformly bounded in $H^{s}(\mathbb{R}^2)$ for any $t\in [0,T)$.
\item[(C3)] 
Let $\theta = \frac{r_c}{s} \equiv \frac1{s} - \frac2{m-1}$. Assume $2s+1<m\leq \frac{1+s}{1-s}$, or equivalently, $0<\theta \leq 1$, and $E[u_0] \geq 0$. 
Suppose
\begin{equation}\label{eqrcritinde2}
\begin{aligned}
E_s[u_0]^{\theta} M[u_0]^{1-\theta} < E_s[Q_{s,m}]^{\theta} M[Q_{s,m}]^{1-\theta},
\end{aligned}
\end{equation}
where $Q_{s,m} > 0$ is the ground state solution of \eqref{E:GGS}.

If 
\begin{equation}\label{eqrcritinde}
\begin{aligned}
\|(-\Delta)^{\frac{s}{2}}u_0\|_{L^2(\mathbb{R}^2)}^{\theta} \|u_0\|_{L^2(\mathbb{R}^2)}^{1-\theta} < \|(-\Delta)^{\frac{s}{2}} Q_{s,m}\|_{L^2(\mathbb{R}^2)}^{\theta} 
\|Q_{s,m}\|_{L^2(\mathbb{R}^2)}^{1-\theta},
\end{aligned}
\end{equation}
then the solution $u(t)$ of \eqref{E:fZK} with the initial condition $u_0$ is uniformly bounded in $H^{s}(\mathbb{R}^2)$ for any $t\in [0,T)$. Moreover, for any $t\in [0,T)$
\begin{equation}\label{E:grad-t}
\begin{aligned}
\|(-\Delta)^{\frac{s}{2}}u(t)\|_{L^2(\mathbb{R}^2)}^{\theta} \|u_0\|_{L^2(\mathbb{R}^2)}^{1-\theta} < \|(-\Delta)^{\frac{s}{2}} Q_{s,m}\|_{L^2(\mathbb{R}^2)}^{\theta} 
\|Q_{s,m}\|_{L^2(\mathbb{R}^2)}^{1-\theta}.
\end{aligned}
\end{equation}
\end{itemize}
\end{theorem}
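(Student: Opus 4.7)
The plan is to convert energy conservation (with $\nu_1=-1$, $\nu_2=1$) into a one-variable inequality for $y(t) := \|(-\Delta)^{s/2} u(t)\|_{L^2(\mathbb{R}^2)}$ by bounding the potential term via the sharp fractional Gagliardo--Nirenberg inequality \eqref{optconsg}-\eqref{eqminiequ4}, using $M[u(t)] = M[u_0]$ throughout. Concretely,
\[
\tfrac{1}{2}\, y(t)^2 \;=\; E_s[u_0] \;+\; \frac{1}{m(m+1)} \int_{\mathbb{R}^2} u(t)^{m+1}\,dx\,dy \;\le\; E_s[u_0] \;+\; \frac{C_{GN}}{m(m+1)}\, M[u_0]^{\tfrac{1}{2}\left((m+1)-\tfrac{m-1}{s}\right)}\, y(t)^{\tfrac{m-1}{s}}.
\]
The proof then splits according to the sign of $\tfrac{m-1}{s}-2$, which is precisely the criticality dichotomy in \eqref{CritiInd}.

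For (C1), the exponent $\tfrac{m-1}{s}<2$, so the right-hand side is sub-quadratic in $y$; Young's inequality absorbs the nonlinear piece into $\tfrac{1}{4}y(t)^2$ plus a constant depending only on $E_s[u_0]$ and $M[u_0]$, giving the uniform bound directly. For (C2), $m=2s+1$ makes the exponent exactly $2$; evaluating the sharp constant \eqref{eqminiequ4} at $m-1=2s$ collapses the numerical factors to yield $\tfrac{C_{GN}}{m(m+1)}\|u_0\|_{L^2}^{2s} = \tfrac{1}{2}\bigl(\|u_0\|_{L^2}/\|Q_{s,m}\|_{L^2}\bigr)^{2s}$, so the master inequality reduces to
\[
\tfrac{1}{2}\left(1 - \Big(\tfrac{\|u_0\|_{L^2}}{\|Q_{s,m}\|_{L^2}}\Big)^{2s}\right) y(t)^2 \;\le\; E_s[u_0],
\]
which is conclusive under the subthreshold mass hypothesis.

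For (C3), the exponent $\tfrac{m-1}{s}>2$ prevents absorbing the nonlinear term, so a variational/continuity argument is required. I introduce
\[
g(y) \;=\; \tfrac{1}{2}\,y^2 \;-\; \frac{C_{GN}}{m(m+1)}\, M[u_0]^{\tfrac{1}{2}\left((m+1)-\tfrac{m-1}{s}\right)}\, y^{\tfrac{m-1}{s}},
\]
which attains a unique maximum at some $y_c>0$. Using the Pohozaev identities \eqref{Phideneq1}-\eqref{Phideneq2} for $Q_{s,m}$ together with \eqref{eqminiequ4} (so that equality holds in \eqref{optconsg} for $Q_{s,m}$), I verify that when $M[u_0]$ is replaced by $M[Q_{s,m}]$ one has $y_c = \|(-\Delta)^{s/2}Q_{s,m}\|_{L^2}$ and $g(y_c) = E_s[Q_{s,m}]$. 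Since $E_s^{\theta}M^{1-\theta}$ and $y^{\theta}M^{1-\theta}$ with $\theta = r_c/s$ are scale-invariant under \eqref{scaling1}, the hypotheses \eqref{eqrcritinde2} and \eqref{eqrcritinde} are respectively equivalent to $E_s[u_0] < g(y_c)$ and $y(0) < y_c$ after the mass-matching rescaling. Energy conservation forces $g(y(t)) \le E_s[u_0] < g(y_c)$, which confines $y(t)$ to the union $[0,y_-]\cup[y_+,\infty)$ of two disjoint intervals separated by $y_c$; continuity of $t\mapsto y(t)$ (inherited from $u\in C([0,T);H^r)$ with $r>2$) together with $y(0)<y_c$ then forces $y(t)\le y_- < y_c$ for all $t\in[0,T)$, establishing both uniform boundedness and the strict inequality \eqref{E:grad-t}.

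The main obstacle is the reconciliation in (C3) between the abstract critical point $(y_c, g(y_c))$ produced by Gagliardo--Nirenberg and the concrete ground-state invariants $(\|(-\Delta)^{s/2}Q_{s,m}\|_{L^2}, E_s[Q_{s,m}])$. This identification and the scale-invariant reformulation of \eqref{eqrcritinde2}-\eqref{eqrcritinde} both rely on combining Lemma \ref{PHident} with the explicit sharp constant \eqref{eqminiequ4}; once that algebraic matching is carried out, the continuity/bootstrap step is standard. A secondary (but routine) point is checking that $y(t)$ is continuous in $t$ in the $H^s$-sense, which follows from the $H^r$-regular solutions provided by parabolic regularization and the embedding $H^r \hookrightarrow H^s$ for $r>2>s$.
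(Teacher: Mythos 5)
Your proposal is correct and follows essentially the same route as the paper: energy plus mass conservation combined with the sharp Gagliardo--Nirenberg inequality \eqref{optconsg} to get a one-variable inequality in $\|(-\Delta)^{s/2}u(t)\|_{L^2}$, with the sub-quadratic exponent handled by absorption in (C1), the exact cancellation via \eqref{eqminiequ4} in (C2), and in (C3) the local maximum of the resulting one-variable function identified with the ground-state quantities through the Pohozaev identities, followed by the standard continuity/confinement argument. The only cosmetic difference is that the paper works with the pre-normalized variable $x(t)=\|(-\Delta)^{s/2}u(t)\|_{L^2}^2\,\|u_0\|_{L^2}^{2(1/\theta-1)}$ so that $f(x)=x-\beta x^{(m-1)/(2s)}$ is mass-independent, whereas you keep the mass dependence inside $g$ and invoke scale invariance at the end; the two are equivalent.
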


The proof for (C1) and (C2) of Proposition \ref{propcriti} follows Weinstein's classical approach \cite{We1983}, for (C3) see \cite{FaraLinaPast2014}, \cite{HR2008}, \cite{HR2007}, we provide the details below after making several comments. 

\begin{remark}
\begin{enumerate}
\item
The conclusion of Proposition \ref{propcriti} is still valid for $m>1$, not necessarily an integer, provided that for any $u_0\in H^r(\mathbb{R})$, $r\geq s$, there exist $0<T\leq \infty$, and a unique solution $u\in C([0,T);H^r(\mathbb{R}^2))$ of \eqref{E:fZK} with the initial condition $u_0$.
\item
We observe that (C1), (C2) and (C3) correspond to the $L^2$-subcritical, critical, and supercritical cases, respectively.  
\item
When $m=2s+1$, we expect the following conjecture to hold:
\end{enumerate}

\begin{conjecture}\label{C:critical}
Let\,\footnote{An ultimate goal would be $u_0 \in L^2(\mathbb R^2)$.}  
{$u_0 \in H^s(\mathbb R^2)$}\,
and $Q=Q_{s,2s+1}$ be the ground state solution of \eqref{E:GGS}. 
\begin{itemize}
\item[I.] 
If $\|u_0\|_{L^2(\mathbb{R}^2)}<\|Q\|_{L^2(\mathbb{R}^2)}$, then the solution $u(t)$ exists globally in time. 
\item[II.] 
If $\|u_0\|_{L^2(\mathbb{R}^2)} > \|Q\|_{L^2(\mathbb{R}^2)}$ and $u_0$ is sufficiently localized, then the solution $u(t)$ blows up in finite time. 
In particular, if $E[u_0]<0$ (hence, $\|u_0\|_{L^2(\mathbb{R}^2)} > \|Q\|_{L^2(\mathbb{R}^2)}$) and $u_0$ has some localization implies blow-up in finite time. 
\end{itemize}
\end{conjecture}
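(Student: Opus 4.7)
The plan is to handle the two parts of Conjecture \ref{C:critical} separately, exploiting the $L^2$-critical structure $m=2s+1$ together with the sharp Gagliardo--Nirenberg inequality already isolated above.

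\textbf{Part I (subthreshold global existence).} This part is essentially a specialization of Theorem \ref{propcriti}(C2) at the critical power, conditional on local well-posedness in $H^s(\mathbb{R}^2)$ (reaching the $u_0 \in L^2$ target stated in the footnote is genuinely open: the current LWP threshold for \eqref{E:HBO} is $r>3/2$). Substituting the sharp constant \eqref{eqminiequ4} into \eqref{optconsg} and using $L^2$-mass conservation produces
\begin{equation*}
\tfrac{1}{m(m+1)} \int_{\mathbb{R}^2} u(t)^{m+1}\, dxdy \leq \tfrac{1}{2}\left(\frac{\|u_0\|_{L^2}}{\|Q\|_{L^2}}\right)^{2s} \|(-\Delta)^{s/2} u(t)\|_{L^2}^2.
\end{equation*}
Plugging into conservation of energy \eqref{genergy} gives
\begin{equation*}
\tfrac{1}{2}\left(1 - (\|u_0\|_{L^2}/\|Q\|_{L^2})^{2s}\right) \|(-\Delta)^{s/2} u(t)\|_{L^2}^2 \leq E_s[u_0],
\end{equation*}
so under the subthreshold hypothesis the prefactor is strictly positive, the $H^s$ norm is uniformly bounded in $t$, and the solution extends globally by the blow-up alternative.

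\textbf{Part II (blow-up above the ground-state mass).} The natural approach is a localized virial identity. I would define
\begin{equation*}
V_R(t) = \int_{\mathbb{R}^2} \psi_R(x,y)\, u^2(x,y,t)\, dxdy,
\end{equation*}
where $\psi_R$ behaves like $x^2+y^2$ on $\{|(x,y)| \leq R\}$ and is smoothly truncated at infinity so that $V_R(0)$ is finite under the ``sufficient localization'' hypothesis on $u_0$. Differentiating twice in time, integrating by parts against the equation, and exploiting the $L^2$-critical scaling, the goal is to show that $V_R''(t)$ is bounded above by a positive multiple of $E_s[u_0]$ plus errors that vanish as $R \to \infty$ (the Glassey virial mechanism familiar from the $L^2$-critical NLS). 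Under $E_s[u_0] < 0$ this forces $V_R''(t) \leq C\, E_s[u_0] < 0$ uniformly, driving $V_R(t)$ to become negative in finite time, which contradicts $V_R \geq 0$ and so the solution must cease to exist in $H^s$ before that time. The implication $E_s[u_0] < 0 \Rightarrow \|u_0\|_{L^2} > \|Q\|_{L^2}$ follows immediately from sharp Gagliardo--Nirenberg together with $E_s[Q]=0$ in the critical case (cf.\ \eqref{PhiE}).

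\textbf{Main obstacle.} The critical difficulty, and the reason the statement is formulated as a conjecture rather than a theorem, lies in the virial computation for the nonlocal dispersion $\partial_x(-\Delta)^s$: this operator does not commute cleanly with the polynomial multiplier $\psi_R$, so the commutator terms appearing in $V_R''(t)$ are genuinely nonlocal and not manifestly signed, and one cannot simply extract $E_s[u_0]$ as in the local NLS or gKdV case. Compounding this, the ground state satisfies only the algebraic decay $|Q(x,y)| \lesssim (1+|(x,y)|)^{-(2+2s)}$ from \eqref{poldecayGS}, so second moments of $Q$ itself typically diverge, forcing the cutoff scale $R$ to be coupled carefully to the modulation scale of the solution. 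A complete proof would likely require adapting the Martel--Merle framework developed for $L^2$-critical gKdV --- modulated decomposition around $Q$, truncated virial estimates with $R$ tied to the modulation parameters, and a Kato-type monotonicity/smoothing formula for the fractional operator --- to the two-dimensional nonlocal setting of \eqref{E:HBO}; this last ingredient appears to be the key missing technical piece, and it is why the numerical evidence gathered in Sections \ref{S:Numerical solution} and subsection \ref{S:bounds} currently provides the main support for the conjecture.
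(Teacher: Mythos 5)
Your proposal is correct and follows essentially the same route as the paper: Part I is precisely the argument of Theorem \ref{propcriti}(C2) --- the sharp Gagliardo--Nirenberg inequality \eqref{optconsg} with constant \eqref{eqminiequ4} combined with mass and energy conservation, conditional on local well-posedness in $H^s(\mathbb{R}^2)$. Your assessment of Part II also matches the paper's own position: no analytic proof is given (which is exactly why the statement is formulated as a conjecture), the only support being the numerical evidence of subsection \ref{Blow-up}, with the nonlocal virial/commutator obstructions you identify being the recognized missing ingredient.
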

\begin{enumerate}
\item[]
In (C2) we prove the part I of Conjecture \ref{C:critical} for $u_0 \in H^r(\mathbb R^2)$, $r>2$, and make a partial progress for $u_0 \in H^s(\mathbb R^2)$ (conditional on the local wellposedness in $H^s$). The second part of Conjecture \ref{C:critical},
when $s=\frac{1}{2}$, $m=2$, is confirmed numerically in subsection \ref{Blow-up} for initial data with different decays at infinity (we show that there are blow-up solutions with positive and negative energy). Moreover, a stable blow-up regime is self-similar (in the core region) with the rescaled ground state as the blow-up profile.

\item
We note that (C3) is a generalization of \cite{FaraLinaPast2014}  for fKdV (see also \cite{SvetKaiWangBO} results for the gBO equation), and in a more general sense, is a generalization of the dichotomy first obtained for the NLS in \cite{KM2006} and \cite{HR2008}, where the opposite inequality in \eqref{eqrcritinde} was also possible to consider.
\end{enumerate}

\end{remark}

\begin{proof}[Proof of Proposition \ref{propcriti}]
From the definition of energy \eqref{genergy} and the $L^2$ conservation, together with \eqref{optconsg},  we get
\begin{equation}\label{enerineq}
\begin{aligned}
\|(-\Delta)^{\frac{s}{2}}u(t)\|_{L^2(\mathbb{R}^2)}^2&=2E_s[u(t)]+\frac{2}{m(m+1)}\int (u(x,y,t))^{m+1}\, dx dy\\
&\leq 2E_s[u_0]+\frac{2C_{GN}}{m(m+1)}\|(-\Delta)^{\frac{s}{2}} u(t)\|_{L^2(\mathbb{R}^2)}^{\frac{m-1}{s}}\|u_0\|_{L^2(\mathbb{R}^2)}^{\frac{s(m+1)-(m-1)}{s}}.
\end{aligned}
\end{equation}

If $1<m<2s+1$, the above inequality shows that $\|(-\Delta)^{\frac{s}{2}}u(t)\|_{L^2(\mathbb{R}^2)}$ is uniformly bounded for all $t\in [0,T)$. This completes the proof of (C1). 

If $m=2s+1$, we deduce the uniform bound if
\begin{equation}
\Big(1-\frac{2C_{GN}}{m(m+1)}\|u_0\|_{L^2(\mathbb{R}^2)}^{2s}\Big)>0.
\end{equation}
Plugging the sharp constant \eqref{eqminiequ4} into the above expression yields (C2).

Next, we consider $2s+1<m \leq \frac{1+s}{1-s}$.
Multiplying both sides of \eqref{enerineq1} by $\|u_0\|_{L^2}^{2(\frac1{\theta}-1)}$, we get 
\begin{equation}\label{E:ener2}
\begin{aligned}
\|(-\Delta)^{\frac{s}{2}}u(t)\|_{L^2(\mathbb{R}^2)}^2 \|u_0\|_{L^2}^{2(\frac1{\theta}-1)}
& \leq 2 E_s[u_0] M[u_0]^{\frac1{\theta}-1} \\
& + \frac{2C_{GN}}{m(m+1)}
\left( \|(-\Delta)^{\frac{s}{2}} u(t)\|_{L^2(\mathbb{R}^2)}^2
\|u_0\|_{L^2(\mathbb{R}^2)}^{2(\frac1{\theta}-1)} \right)^{\frac{m-1}{s}}.
\end{aligned}
\end{equation}
Setting 
\begin{equation*}
x(t)=\|(-\Delta)^{\frac{s}{2}}u(t)\|_{L^2(\mathbb{R}^2)}^2 \|u_0\|_{L^2}^{2(\frac1{\theta}-1)}, ~~ \text{ and } ~~ \, \beta=\frac{2C_{GN}}{m(m+1)},
\end{equation*}
we rewrite \eqref{E:ener2} as
\begin{equation*}
x(t)-\beta \, x(t)^{\frac{m-1}{2s}}\leq 2 E_s[u_0] M[u_0]^{\frac1{\theta}-1}.
\end{equation*}
We note that the function $f(x):=x-\beta x^{\frac{m-1}{2s}}$, $x\geq 0$, has a local maximum at $x_{0}=\big( \frac{1-r_c}{\beta} 
\big)^{\frac{1-r_c}{r_c}}$ with the maximum value $f(x_0)=r_c \cdot \big( \frac{1-r_c}{\beta} \big)^{\frac{1-r_c}{r_c}}$.
We impose the conditions
\begin{equation}\label{enerineq1}
2E_s[u_0]  M[u_0]^{\frac1{\theta}-1} <f(x_0) \, \, \text{ and } \, \, x(0)<x_0.
\end{equation}
By a continuity argument, it must be the case that $x(t)\leq x_0$ for any time $t$, when the solution $u(t)$ is defined. Thus, once we have the explicit relations for \eqref{enerineq1}, the above discussion yields the proof of (C3). By \eqref{eqminiequ4} 
we deduce the identity 
\begin{equation}\label{enerineq2.1}
\|Q_{s,m}\|_{L^2(\mathbb{R}^2)}^2=\frac{2(2-(1-s)(m+1))}{m-1-2s}E_s[Q_{s,m}],
\end{equation}
then a computation shows that  the first condition in \eqref{enerineq1} is equivalent to
\begin{equation}\label{enerineq2}
\begin{aligned}
E_s[u_0] M[u_0]^{\frac1{\theta}-1} < 
E_s[Q] M[Q]^{\frac1{\theta}-1},
\end{aligned}
\end{equation}
or \eqref{eqrcritinde}.
Likewise, $x(0)<x_0$ is equivalent to \eqref{eqrcritinde2} and $x(t)<x_0$ is rephrased as \eqref{E:grad-t}, finishing the proof.
\end{proof}


\subsection{Linear equation and radiation}\label{S:radiation}
When studying solitary waves in KdV-type equations, one ultimately encounters such regions as the soliton core region, the fast decaying tail (typically, located to the right of the moving soliton in the $1d$ problems), and the radiation region, where the dispersive oscillations propagate (typically to the left of the soliton). In higher dimensional problems similar behavior was observed and obtained in ZK models (see \cite{AsymStaZK3D}, \cite{AsymStaZK2D}, \cite{FHRY}, \cite{KRS2}, \cite{KRS1}). As we deal with the $2d$ HBO equation, we also  
investigate a region in the plane, where a solitary-wave type solution of \eqref{E:HBO} propagates dispersive oscillations, or radiates into that region.
For that we consider a linear evolution initial-value problem
\begin{equation}\label{leqHBO}
\left\{\begin{aligned}
&\partial_t u-\partial_x(- \Delta)^s u=0, \, \, (x,y,t) \in \mathbb{R}^3, \, \, 0<s<1,\\
&u(x,0)=u_0(x).
\end{aligned}\right.
\end{equation}
We first remark that Strichartz estimates for the equation \eqref{leqHBO} are known for any dimension $d\geq 2$ when $s=\frac{1}{2}$, and for $\frac{1}{2}<s<1$ when $d\geq 3$, see \cite{linaO,RobertS}. As far as we know, Strichartz estimates for \eqref{leqHBO} have not been determined for dispersions $0<s<\frac{1}{2}$. For dimension 1, see \cite{KPVOsint1D}. On the other hand, for a sufficiently regular initial condition $u_0$ the solution of \eqref{leqHBO} is
\begin{equation}
\begin{aligned}
u(x,y,t)=\frac{1}{(2\pi)^2}\int e^{ix\xi_1+iy \xi_2+i\omega(\xi_1,\xi_2)t}\widehat{u_0}(\xi_1,\xi_2) \, d\xi_2 d\xi_2,
\end{aligned}
\end{equation}
where $\omega(\xi_1,\xi_2)=\xi_1|(\xi_1,\xi_2)|^{2s}$ is the dispersion relation. Then the group velocity is given by 
\begin{equation}
\begin{aligned}
\nabla \omega(\xi_1,\xi_2)=|(\xi_1,\xi_2)|^{2s-2}\bigg((1+2s)\xi_1^2+\xi_2^2,2s\xi_1\xi_2 \bigg).
\end{aligned}
\end{equation}
The angle $\theta(\xi_1,\xi_2)$, determined by $\nabla \omega (\xi_1,\xi_2)$ and the positive $y$-axis, satisfies the relation 
\begin{equation}
\tan(\theta(\xi_1,\xi_2))=\frac{(1+2s)\xi_1^2+\xi_2^2}{2s\xi_1\xi_2}.
\end{equation}
Setting a reference frame centered at the center of a moving soliton, we obtain the minimal angle determined by the above relation, which satisfies the identity
\begin{equation}\label{E:angle}
\tan(\theta_{min})=\frac{(1+2s)^{\frac{1}{2}}}{s},
\end{equation}
see Figure \ref{F:angle} for a depiction. 

\begin{figure}[ht]
\includegraphics[width=0.45\textwidth]{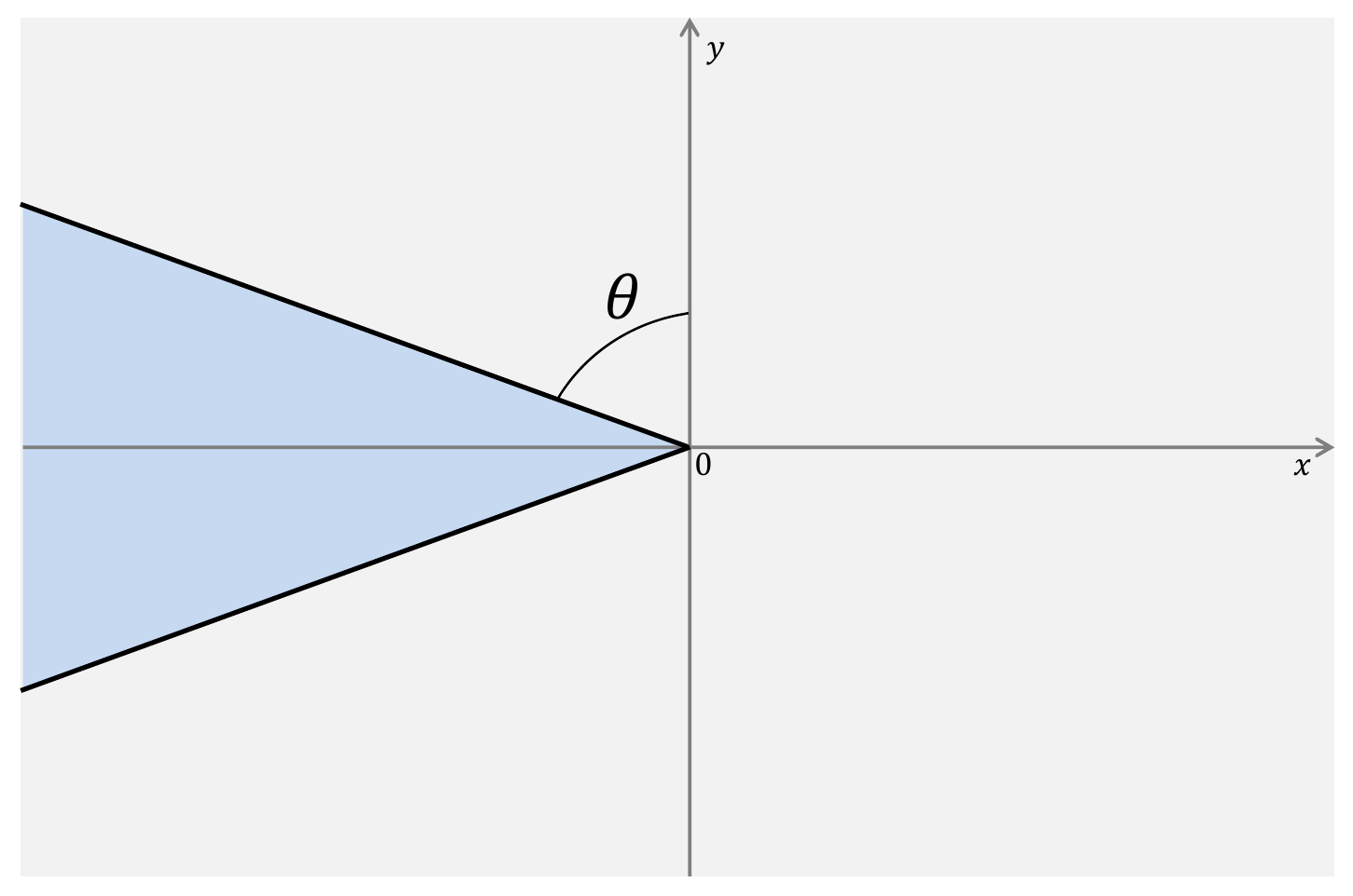}
\caption{The angle $\theta = \theta_{min}$ in \eqref{E:angle}, the blue area is the radiation region (in the frame moving with the soliton).}
\label{F:angle}
\end{figure}

For the case of the HBO equation ($s=\frac{1}{2}$), we have 
\begin{equation}\label{E:angleHBO}
\tan(\theta_{min})=2\sqrt{2},
\end{equation} 
or approximately, $\theta_{min} \approx 70. 52^{\circ}$. Therefore, for an appropriately set up reference frame (moving with the solitary wave), we use $\theta_{min}$  to define the wedge of the radiation region with an angle of a maximum value of $19. 48^{\circ}$ with the $x$-axis (or by symmetry a total angle of  $38.96^{\circ}$), see the blue region in Figure \ref{F:angle}. 
We provide numerical confirmation of the radiation region in Section \ref{S:Numerical solution} (for example, see Figures \ref{F:angle t3} and \ref{F:angle t5}).

\begin{remark}
In the case of the ZK equation ($s=1$ and $m=2$ in \eqref{E:fZK}), the argument above gives an angle for the radiative region of $60 ^{\circ}$, which is compatible with the results presented in \cite{AsymStaZK2D} (see Remarks 1.1 and 1.2), also in \cite{FHRY,
KRS1}. In particular, considering the surface determined by the ZK equation, the angle above is related to the region, where the mitigating factor introduced by the Strichartz estimate in \cite{CaKenZiesl} cancels out. Furthermore, the argument above can also be extended to \eqref{E:fZK} in higher dimensions (note that the nonlinear part does not play a role on the size of the angle of this region). In particular, the same representative angle was obtained for the $3d$ ZK equation in \cite{AsymStaZK3D}, see also \cite{KRS2}.
\end{remark}

We are now ready to study the equation \eqref{E:HBO} numerically; for that we first describe the numerical approach that we develop for the 2d HBO \eqref{E:HBO}, then we obtain the ground state solutions, and finally, we show the dynamical solutions of the $2d$ HBO equation. 


\section{Numerical approach}\label{S:Numerical method}
In this section, we describe our numerical method that we develop to solve the HBO equation \eqref{E:HBO} on the whole real space $\mathbb{R}^2$. We start with the description of the rational basis (eigen)functions, or also referred to as Wiener functions, on $\mathbb{R}$. Then, we apply the Galerkin approximation to the fractional Laplacian $(-\Delta)^{\frac{1}{2}}$ from \cite{SSTWY2020}. After the space discretization, the equation is reduced to the nonlinear ODE system, which can be solved by a variety of standard numerical integrators.

\subsection{Rational basis functions}
One method for solving the dispersive equations on $\mathbb{R}^d$ is to use the Fourier spectral discretization in space by taking a sufficiently large domain. Then, the fractional Laplacian is discretized in a straightforward manner on the frequency space, i.e., $\widehat{(-\Delta)^s u}= |\xi|^{2s}\hat{u}$ (for example, as it is done in \cite{LSS2013} and  \cite{KP2015}). However, this requires extremely large periodic domains to ensure a sufficiently good approximation of the whole real space $\mathbb{R}^d$, as the fractional Laplacian term $(-\Delta)^s u$ decays only with an algebraic rate. A large number of grid points is needed for a satisfactory resolution. Furthermore, despite of the large length of the domain and number of grid points, the Fourier discretization is still unable to capture the asymptotic behavior such as the decay rate when the solution approaches the computational boundary, since the numerical solution 
wraps around the boundary, while the actual solution decays at $\infty$.   

An alternative space discretization from \cite{Christov82} considers the use of the rational basis functions (sometimes also called Wiener functions), on the whole real line, i.e., 

\begin{equation}\label{rational}
 u(x,t)=\sum_{n=-\infty}^{\infty} \tilde{u}_{n}(t)\rho_n(x), \quad \rho_n(x)=\frac{(\alpha+ix)^n}{(\alpha-ix)^{n+1}},
\end{equation}
where $\alpha$ is a mapping parameter indicating that half of the grid points are located in the interval $[-\alpha,\alpha]$. It is shown in \cite{Christov82} that $\{ \rho_n(x) \}_{n=-\infty}^{\infty}$ 
form a complete orthogonal basis in $L^2(-\infty, \infty)$ with the orthogonality
$$ 
\int_{-\infty}^\infty \rho_m(x) \overline{\rho_n(x)}dx = \begin{cases}
      \pi/\alpha, &  m=n\\
      0, & m\neq n.
    \end{cases}   :=\frac{\pi}{\alpha} \delta_{m,n} .   
$$
Therefore, we have 
$$
\tilde{u}_n(t)=\frac{\alpha}{\pi}\int_{n=-\infty}^{\infty} u(x,t)\rho_n(x)dx.
$$

The derivatives of $u(x,t)$ can be easily computed by the relation
\begin{align}
u_x(x,t)=\sum_{n=-\infty}^{\infty} &\frac{i}{2\alpha}[n\tilde{u}_{n-1}+(2n+1)\tilde{u}_n+(n+1)\tilde{u}_{n+1}]\rho_n(x), \label{ux}
\end{align}
and the higher derivatives can then be done iteratively.

In numerical computations, a truncation of $N$ terms is used, i.e.,
$$ 
u(x,t)\approx \tilde{\mathbf{u}}^T \mathbf{\rho}:=\sum_{n=-N/2}^{N/2-1} \tilde{u}_n(t)\rho_n(x),
$$
where $\tilde{\bf{u}}=(\tilde{u}_{-N/2},\tilde{u}_{-N/2+1}, \cdots, \tilde{u}_{N/2-1})^T$ is the vector of the truncated coefficients, and the same for ${\rho}$.
This leads to the sparse matrix forms
\begin{align}\label{E: rational basis derivative}
 u_x \approx [\mathbf{S_1} \tilde{\mathbf{u}}]^T {\rho}, \quad u_{xx} \approx [\bf{S_2} \tilde{\bf{u}}]^T {\rho},
\end{align}
where $\bf{S_{1}}$ is given in \eqref{ux} via the coefficients of $\tilde{u}_n$, and $\mathbf{S_2}=\mathbf{S_1}\times \mathbf{S_1}$.

Now by a change of variable 
$$
x=\alpha\tan \frac{\theta}{2}, \quad -\pi\leq \theta \leq \pi,
$$
and a spatial discretization $x_j=\alpha\tan \frac{\theta_j}{2}, \theta_j=jh, h=2\pi/N, j=-N/2, \cdots, N/2$, we have
\begin{equation}\label{uj}
u_j=u(t,\alpha-ix_j)=\sum_{n=-N/2}^{N/2-1}\tilde{u}_n e^{i\theta_j}.
\end{equation}
The Fast Fourier transform (FFT) can be applied to obtain the coefficients $\tilde{u}_n$. This approximation can be easily extended to higher dimensions with a tensor product.

\subsection{Discretization of the fractional Laplacian 
on $\mathbb{R}^2$}
In this section, we describe the Galerkin approximation of the fractional Laplacian $(-\Delta)^s$ on $\mathbb{R}^2$. This method was introduced by Shen in \cite{SSTWY2020} with the Mapped Chebyshev functions. It can be easily applied to the rational basis functions, and can also be extended to other dimensions. 

We note that $\mathbf{S_1}=i\mathbf{S}$, where $\mathbf{S}$ is a real symmetric matrix from \eqref{ux}. Therefore, $\mathbf{S}$ is diagonalizable with all real eigenvalues written as
$$
\bf{S}=E \Lambda E^T,
$$
where $\mathbf{E}=(e_{j,k})_{j,k=-\frac{N}{2},\cdots \frac{N}{2}-1}$ is the matrix formed by the orthonormal eigenvectors $\vec{e}_k$ of $\bf{S}$, and $\mathbf{\Lambda}$ is the real diagonal matrix. Then, we have
\begin{align}\label{E:S1 S2 hat}
\mathbf{S_1}=i \mathbf{E \Lambda E^T}, \qquad \mathbf{S_2}=-\mathbf{E (\Lambda^2) E^T}.
\end{align}
For a matrix $\mathbf{M}$, we denote $\mathbf{M}(j,k)=m_{j,k}$ to be the $j$th row and $k$th column element. 
Denote $\lambda_k=\mathbf{\Lambda^2}(k,k)$ to be the $k$th eigenvalue of the diagonal matrix $\mathbf{\Lambda^2}$.

Consider the set of new basis functions $ \lbrace\hat{\rho}_k(x)\rbrace$, which is obtained as the diagonal transformation of the old basis $\lbrace\rho_k\rbrace$, i.e.,
\begin{align}\label{E:basis transform}
\hat{\rho}_k(x):= \sum_{j=-N/2}^{N/2-1} e_{j,k} \rho_j(x), \quad \vec{e}_k=(e_{-N/2,k}, \cdots, e_{N/2-1, k})^T.
\end{align}
Then, from the direct adaption of the proof in \cite[Lemma 2.1]{SSTWY2020}, we have
\begin{align}\label{E:basis orthogonal}
(\hat{\rho}_k, \hat{\rho}_j)_{L^2}= \frac{\pi}{\alpha} \delta_{k,j}, \quad \mbox{and} \quad (\hat{\rho}'_k, \hat{\rho}'_j)_{L^2}=\frac{\pi}{\alpha} \lambda_k \delta_{k,j},
\end{align}
which is called the ``biorthogonal property".

Let $\mathbf{\hat{u}}=(\hat{u}_{-N/2}, \cdots \hat{u}_{N/2-1})^T$.
It is easy to see that $\bf{\hat{u}=E^T\tilde{u}}$ in 1d. In 2d, let $\mathbf{\tilde{U}}=(\tilde{u})_{j,k=-\frac{N}{2},\cdots \frac{N}{2}-1}$ be the matrix of  the coefficients with respect to the basis functions $\lbrace \rho_{j,k} \rbrace$. Then, we have the coefficients matrix with respect to the basis functions $\lbrace \hat{\rho}_{j,k} \rbrace$ obtained by
$$
\bf{\hat{U}}=E^T\bf{\tilde{U}}E.
$$

Now, let $(-\Delta)^s u=f$. We show that $\hat{f}_{j,k}=(\lambda_j+\lambda_k)^s \hat{u}_{j,k}$ in $\mathbb{R}^2$. 
In \cite{SSTWY2020}, the authors developed the Galerkin approach to approximate the fractional Laplacian $(-\Delta)^s$ by using the Mapped Chebyshev basis functions in $\mathbb{R}$ (or $\mathbb{R}^d$). This can be applied to other sets of basis functions with the ``biorthogonal property", including the ``biorthogonal rational basis functions" that we described in \eqref{E:basis transform}. 

Indeed, in \cite{BLP2019}, the authors use the Dunford-Taylor formula to change the fractional Laplacian $(-\Delta)^s$ into the full Laplacian
$(-\Delta)$ in the Galerkin formulation, 
\begin{align}\label{E:flaplacian}
\left( (-\Delta)^{\frac{s}{2}}u, (-\Delta)^{\frac{s}{2}}v \right)_{L^2(\Omega)}= C_s \int_0^{\infty} t^{1-2s}\int_{\Omega} (-\Delta) (\mathbb{I}-t^2 \Delta)^{-1}u(x)v(x)dxdt,
\end{align} 
where $C_s=\frac{2\sin(\pi s)}{\pi}$, $\mathbb{I}$ is the identity operator and $\Omega$ can be either the bounded domain in $\mathbb{R}^d$ or $\Omega = \mathbb{R}^d$. Denote $w(x)=(\mathbb{I}-t^2 \Delta)^{-1}u(x)$. Then,
\begin{align}\label{E:flaplacian2}
-t^2\Delta w+w=u, \quad x  \in \Omega, 
\end{align}
and thus, 
\begin{align*}
(-\Delta) (\mathbb{I}-t^2 \Delta)^{-1}u(x)=-\Delta w=t^{-2}(u-w).
\end{align*}
Solving the equation \eqref{E:flaplacian2} and then evaluating the integral \eqref{E:flaplacian} with respect to $t$ becomes a crucial step in accurately evaluating the fractional Laplacian $(-\Delta)^s $. Subsequent works in \cite{BLP20192} proposed the method for evaluating the integral \eqref{E:flaplacian} by the $sinc$ functions. Later, the authors in \cite{SSTWY2020} observed the following integral identity 
$$ 
\int_0^{\infty} \frac{t^{(1-2s)}\lambda^{1-s}}{1+t^2 \lambda} dt= \frac{\pi}{2\sin(\pi s)} =\frac{1}{C_s},
$$ 
and thus, the integral system \eqref{E:flaplacian}--\eqref{E:flaplacian2} can be evaluated exactly in the frequency space
if we can write the inside integral $\int_\Omega t^{-2} (u-w) v dx$ in the diagonal form. As a consequence, the $(-\Delta)^s u$ is evaluated efficiently.

For example, let $\Omega$ be the periodic bounded domain, and we use the Fourier basis to approximate $u(x)$, i.e., $u(x) \approx u_N(x)=\sum_{-N/2}^{N/2-1} \hat{u}_k e^{ixk}.$ By setting $v_k=e^{-ixk}$ for each $k=-\frac{N}{2}, \cdots \frac{N}{2}-1$, we have
$\int_\Omega t^{-2} (u_N-w_N) v_k dx=\frac{k^2}{1+t^2k^2} \hat{u}_k.$ Putting into \eqref{E:flaplacian} for each $k$ yields
\begin{align*}
&\left( (-\Delta)^{\frac{s}{2}}u_N, (-\Delta)^{\frac{s}{2}}v_k \right)_{L^2(\Omega)}= C_s \int_0^{\infty} \frac{t^{(1-2s)} k^2}{1+t^2k^2} \hat{u}_k dt
&= k^{2s} \hat{u}_k C_s \int_0^{\infty} \frac{ t^{(1-2s)}k^{2(1-s)}}{1+t^2k^2} dt = 
|k^2|^s \hat{u}_k.
\end{align*}
In other words,
$$(-\Delta)^su(x) \approx (-\Delta)^su_N(x)= \sum_{-N/2}^{N/2-1} (|k^2|^s\hat{u}_k) e^{ixk},$$
which matches the form $\mathcal{F}\left((-\Delta)^s u\right)=|\xi|^{2s} \hat{u}$ in the usual sense.

When $\Omega= \mathbb{R}^d$, the Fourier basis is no longer preferrable, as the domain truncation may lead to large errors. The biorthogonal Mapped Chebyshev functionss from \cite{SSTWY2020} or the biorthogonal rational basis functions from \eqref{E:basis transform} can be used. For example, in 1d case, recall that $\mathbf{\Lambda^2}=\mathrm{diag}(\lambda_j)$ is the diagonal matrix and $\mathbf{E}=\lbrace e_{j,k} \rbrace $ is the orthonormal matrix. Then, we have 
$$(-\partial_{xx})^s u_N=\sum_{j=-N/2}^{N/2-1} |\lambda_j|^{s} \hat{u}_j \hat{\rho}_j(x).$$
Thus, the stiff matrix for $(-\partial_{xx})^s u_N$ is $\mathbf{\Sigma}=\mbox{diag}(|\lambda_j|^{s})$ for $j=-N/2,\cdots,N/2-1$. 

This can be extended to higher dimensional cases, see details in \cite{SSTWY2020}. In summary, let $\mathbf{\Sigma}$ be the stiff matrix for the fractional Laplacian $(-\Delta)^s$ in $\mathbb{R}^2$. 
We have $(-\Delta)^s u_N= \mathbf{\Sigma}_{j,k} \hat{u}_{j,k}=(\lambda_j+\lambda_k)^s \hat{u}_{j,k}$. 
In other words, we have $(-\Delta)^s u$ is equivalent to $\mathbf{\Sigma} \odot \mathbf{\hat{U}}$ on the frequency side, where $\mathbf{\Sigma}_{j,k}=(\lambda_j+\lambda_k)^s$, and $\odot$ denotes the pointwise product between the matrices ($\mathbf{A} \odot \mathbf{B}=(a_{jk}b_{jk})$). We also denote $ ^{\odot m}$ as the pointwise power of a matrix, e.g., for $m=2$, $\mathbf{U}^{\odot 2}=\mathbf{U} \odot \mathbf{U}$. 

Now, let $\mathbf{U}(t)\approx u(x_j,y_k,t)$ be the approximation of the solution $u(x,y,t)$ to \eqref{E:HBO}. The quantities $\mathbf{U, \ \tilde{U}, \ \hat{U}}$ can be obtained by FFT and the transformation matrix $\mathbf{E}$. To be specific, we first use the FFT from \eqref{uj} to find the coefficients $\tilde{u}_{j,k}$, then we use the relation $\mathbf{\hat{U}}=\bf{E^T {\tilde{U}} E}$ to obtain the biorthogonal coefficients $\hat{u}_{j,k}$, and similarly, going backward reversing the steps. 
Note that $\mathbf{S_1} \mathbf{\tilde{U}}= \mathbf{\hat{S}_1} \mathbf{\hat{U}}.$ Therefore, the first order stiff matrix, $\mathbf{\hat{S}_1=E^T S_1 E=E^T (\mathrm{i}E\Lambda E^T) E}=i\mathbf{\Lambda}$ with respect to the basis $\lbrace \hat{\rho}_{j,k} \rbrace$, is also diagonal.

Finally, the semi-discretization of the HBO equation \eqref{E:HBO} on the frequency space $\lbrace \hat{\rho}_{j,k} \rbrace$ yields 
\begin{align}\label{E:HBO space d}
\mathbf{\hat{U}}_t-\mathbf{\hat{S}_1(\Sigma \odot \hat{U})}+\frac{1}{m} \mathbf{\hat{S}_1}(\widehat{\mathbf{U}^ {\odot m}})=0.
\end{align}  

The matrix $\mathbf{\hat{U}}$ and $\mathbf{\Sigma}$ can be reordered into an $N^2 \times 1$ long vectors 
\begin{align*}
&\vec{\hat{U}}=(\hat{u}_{-N/2,-N/2},\hat{u}_{-N/2+1,-N/2},\cdots, \hat{u}_{-N/2,-N/2+1}, \cdots, \hat{u}_{N/2-1,N/2-1})^T;\\
&\vec{\Sigma}=(\mathbf{\Sigma}_{-N/2,-N/2},\mathbf{\Sigma}_{-N/2+1,-N/2},\cdots, \mathbf{\Sigma}_{-N/2,-N/2+1}, \cdots, \mathbf{\Sigma}_{N/2-1,N/2-1})^T.
\end{align*}
The stiff matrix $\mathbf{\hat{S}_1}$ for $\partial_x$ can be changed as $\mathbf{\hat{S}_1^x}=\mbox{kron}(\mathbf{I,\hat{S}_1})$, where $\mbox{kron}$ is the Kronecker product and $\mathbf{I}$ is the $N \times N$ identity matrix. Then, the equation \eqref{E:HBO space d} becomes
\begin{align}\label{E:HBO space d2}
\vec{\hat{U}}_t-\mathbf{\hat{S}^x_1}\left(\mbox{diag}(\vec{\Sigma} ) \vec{\hat{U}} \right)+\frac{1}{m} \mathbf{\hat{S}^x_1}(\widehat{\vec{U}^{\odot m}})=0.
\end{align}

The system \eqref{E:HBO space d2} only involves the diagonal matrices, and thus, can be solved with the computational cost $\mathcal{O}(N^2)$. However, we recall that $\mathbf{\hat{U}}=\bf{E^T {\tilde{U}} E}$, which is the multiplication between full matrices. Therefore, the transformation between the frequency space and the physical space needs $\mathcal{O}(N^3)$ operations, which is the total computational cost of our algorithm.

\subsection{Time integration}
The given system can be integrated by various time integrators. When applying the standard explicit time integrators, such as the Runge-Kutta (RK4) method, the time step $\Delta t$ has to be chosen to satisfy the so-called CFL condition, which is $\Delta t < \max_j{|\lambda_j|}^{-2s-1}$. 

To allow a larger time step size, the modified 4th order exponential time differencing (mETDRK4) from \cite{KT05} can be applied. This method allows us to take the time step $\Delta t \sim \max_j(|\lambda_j|)^{-1}$ due to the 1st order derivative on the nonlinear term $(\frac{1}{m}u^m)_x$. Other implicit Runge-Kutta methods can be used for the choice of even larger time steps, for example, the 4th order Runge-Kutta method with Gauss-Legendre collocation points (IRK4), which has been shown competitively efficient to the mETDRK4 method in simulating KdV equations (see e.g., \cite{YK2021}, \cite{KP2015}, \cite{BDKMS1995}). The resulting nonlinear system can be solved by the fixed point iteration, similar to \cite{SDKL2021} or \cite{KP2015}.
In our simulations, we used both the mETDRK4 and IRK4 methods, and the results match with each other.

\section{Computation of the ground state solution}\label{S:Soliton}
In this section, we show our numerical results for computing the ground state solution $Q$ that solves $-Q- (-\Delta)^s Q +\frac1{m}Q^m=0$, ${Q > 0,~~ Q \in H^{2s+1}(\mathbb R^2)}\cap C^{\infty}(\mathbb R^2)$, or the rescaled profiles of the solitary waves $Q_c$ in \eqref{SWEQ}, namely,
\begin{align}\label{E:Qc eqn}
c\, Q_c +(-\Delta)^s Q_c-\frac{1}{m}(Q_c)^m=0, 
\end{align}
where $c>0$ is a constant that generates a family of the rescaled ground state solutions \begin{equation}\label{E:Qc}
Q_c(x,y)=c^{\frac{1}{m-1}}\,Q(c^{\frac1{2s}}x, c^{\frac1{2s}} y),
\end{equation}
producing traveling solitary waves $u(x,y,t)=Q_c(x-c\,t,y)$.

We apply the Petviashvili's iteration to obtain the profiles $Q_c$. We give a brief review of this method, which 
has been well-studied in the literature and for details we refer the interested reader to \cite{Petviashvili1976}, \cite{PS2004}, \cite{OSSS2016}, \cite{LY2007}, \cite{YANG2009}, \cite{YLT2007}.

We denote by $Q_h$ our numerical solution for $Q_c$ in \eqref{E:Qc eqn} for a given $c>0$. Next, we define the operator $\mathbb{M}=(-\Delta)^s+ c \, \mathbb{I}$, where $\mathbb{I}$ is the identity operator.
Suppose that at the $l$th iteration, we obtained $Q_h^l$. We compute the constant (that will control our fixed-point iteration)
$$
\gamma_l = \left( \frac{m \langle Q_h^l, Q_h^l \rangle}{ \langle Q_h^l, \mathbb{M}^{-1} \left( (Q_h^l)^m \right) \rangle } \right)^{\frac{1}{m-1}}.
$$
Thus, we obtain the following iteration
\begin{align}\label{E:Q scheme}
Q_h^{l+1}=\frac{1}{m} \, \mathbb{M}^{-1} \Big( (\gamma_l \, Q_h^l)^m \Big).
\end{align}
We set the stopping criteria to be 
$\| Q_h^{l+1}-Q_h^l \|_{L^{\infty}(\mathbb{R}^2)} <Tol$ with $Tol=10^{-8}$ in our computations. 

\subsection{Ground state in the $2d$ (critical) HBO}\label{ground-state-HBO}
Until now the discussion in this paper has been for a general equation of type \eqref{E:fZK} or \eqref{E:AfZK}, and now this is where we completely turn to the $2d$ HBO equation \eqref{E:HBO}, that is, we only consider $m=2$ and $s=\frac12$ in \eqref{E:Qc eqn}.

\begin{figure}[ht]
\includegraphics[width=0.35\textwidth]{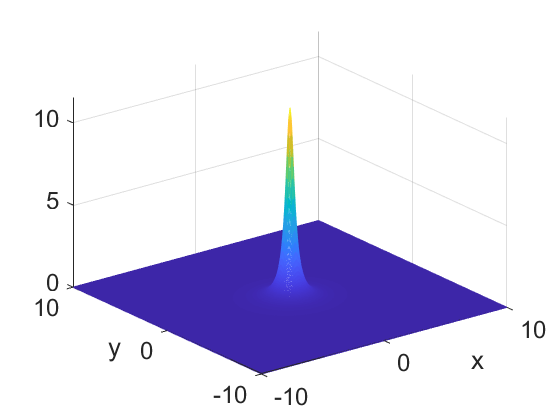}
\includegraphics[width=0.29\textwidth]{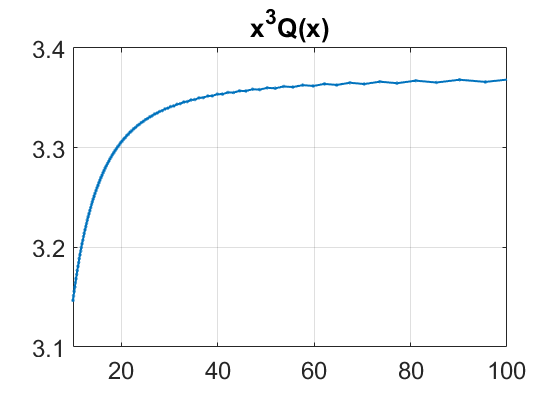}
\includegraphics[width=0.32\textwidth]{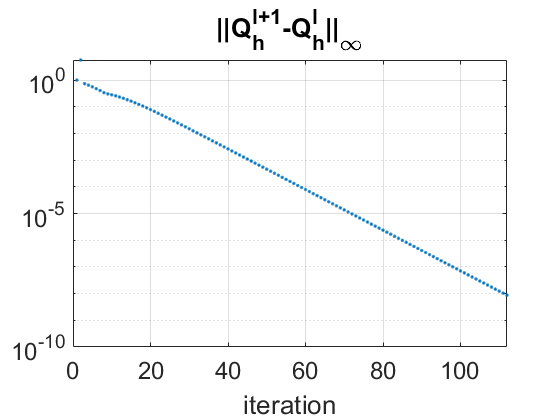}
\caption{Left: solution $Q_c$ of \eqref{E:Qc eqn}, $c=1$. Middle: spatial dependence of $x^3 \, Q(x,0)$. Right: $\| Q_h^{l+1}-Q_h^l \|_{L^{\infty}(\mathbb{R}^2)}$ on the log scale for each iteration. }
\label{F:profile GS Q}
\end{figure}

Figure \ref{F:profile GS Q} shows our numerical solution for $Q_c$ from \eqref{E:Qc eqn} with $c=1$, $m=2$, $s=\frac12$ (we set $\alpha=10$ and $N=512$). It shows a well-localized, radially symmetric, positive function (in agreement with Theorem \ref{existTHR}). To double check its decay, we track, for example, the quantity $x^3 \, Q(x,0)$ (i.e., the decay in one of the cross-sections, by $y=0$) and plot the resulting curve in the middle graph of Figure \ref{F:profile GS Q}. Observe the convergence to the horizontal asymptote as $x$ grows large. We obtain similar results in other cross-sections. This confirms that the solution decays as $1/|x|^3$, stated in \eqref{poldecayGS} of Theorem \ref{existTHR}.

From our numerical simulations, we can see that the difference  $\| Q_h^{l+1}-Q_h^l \|_{L^{\infty}(\mathbb{R}^2)}$ decays exponentially, see the right graph in Figure \ref{F:profile GS Q}. This agrees with theoretical results about the Petviashvili's iteration in \cite{PS2004}, \cite{OSSS2016} and \cite{LY2007}.

To check further the accuracy and consistency of our computation of $Q$, we define the error quantities $e_1$, $e_2$ and $e_3$ from the Pohozaev identities \eqref{Phideneq1.1}, \eqref{Phideneq2.1} and a multiple of energy (in the $L^2$-critical case $E[Q]=0$) \eqref{PhiE} as
\begin{align*}
&e_1= \| (-\Delta)^{\frac{1}{4}}Q_h \|_{L^{2}(\mathbb{R}^2)}^2-2 \| Q_h \|_{L^{2}(\mathbb{R}^2)}^2 ,\\
&e_2= \| Q_h \|_{L^{3}(\mathbb{R}^2)}^3-6 \| Q_h \|_{L^{2}(\mathbb{R}^2)}^2,\\
&e_3= 3\| (-\Delta)^{\frac{1}{4}}Q_h \|_{L^{2}(\mathbb{R}^2)}^2 - \| Q_h \|_{L^{3}(\mathbb{R}^2)}^3.
\end{align*}

Table \ref{T:Q consistency} shows the numerical values for $e_1$, $e_2$ and $e_3$ depending on the mapping parameter $\alpha$ and the number of nodes $N$. We can see that the error decreases as we increase the value of $\alpha$, or, in other words, if we increase the length of the computational domain. On the other hand, increasing the number of nodes $N$ will not decrease the error (compare the second column with the last column in Table \ref{T:Q consistency} for $\alpha=20$ and $N=512$ vs. $1024$).
 
\begin{table}[ht]
\begin{tabular}{ |c|c|c|c|c|c| } 
 \hline
 $N$     & $256$ & $512$ & $1024$& $2048$& $1024$  \\ 
 \hline
 $\alpha$ & $10$ & $20$ & $40$& $80$& $20$  \\ 
 \hline
 $e_1$ &  $0.81473$ & $0.21302$ & $0.058093$& $0.019039$ & $0.21121$  \\ 
 \hline
 $e_2$ & $1.6295$ & $0.42605$ & $0.11619$& $0.038078$ & $0.42684$ \\ 
 \hline
 $e_3$ &  $0.81473$ & $0.21302$ & $0.058093$& $0.019039$ & $0.20679$ \\ 
 \hline
\end{tabular}
\smallskip
\caption{The values of $e_{1,2,3}$ for different values of the mapping parameter $\alpha$ and number of nodes $N$.}
\label{T:Q consistency}
\end{table}

\begin{table}[ht]
\begin{tabular}{ |c|c|c|c|c|c| } 
 \hline
 $N$     & $256$ & $256$ & $512$& $512$& $512$  \\ 
 \hline
 $\alpha$ & $10$ & $20$ & $10$& $20$& $40$  \\ 
 \hline
 $\|Q\|_{L^2}^2$ &  $42.6381$ & $39.1681$ & $42.6406$& $42.7366$ & $39.3294$  \\ 
 \hline
\end{tabular}
\smallskip
\caption{The values of $\|Q\|_{L^2}^2$ for different values of the mapping parameter $\alpha$ and number of nodes $N$.}
\label{T:Q mass}
\end{table} 

For later purposes, we compute the $L^2$-norm of $Q$. Table \ref{T:Q mass} shows how this value depends on the mapping parameter $\alpha$ and the number of nodes $N$.

\section{Numerical solutions of the HBO equation}\label{S:Numerical solution}

In this section we discuss our numerical findings for the dynamical HBO equation \eqref{E:HBO}. We first discuss solutions that exist for all times, then we explore the possibility of finite time blow-up, and finish with investigating the interactions between two solitary waves. We recall that the equation \eqref{E:HBO} is $L^2$-critical, and the Conjecture \ref{C:critical} states that the ground state $Q$ would be a possible threshold for the globally vs. finite time existing solutions (to be more precise, the $L^2$ norm of   $Q$). To investigate that we consider various multiples and translations of $Q$ as well as other types of data with different decay rates, and confirm the conjecture. Furthermore, our analysis shows that the blow-up solutions are self-similar (in its core region) with the profiles of the rescaled ground state solutions. As far as the globally existing solutions we observe that eventually they all disperse into the radiation. Even those solutions, which initially start traveling to the right (in the $x$-direction) and try to approach a rescaled ground state profile, due to the outgoing dispersive oscillatory radiation (in the opposite direction or region): the location of the peak of the solution travels to the right (in the positive $x$-direction), but stops (possibly for some time), and then travels to the left, completely shedding via dispersive oscillations into the radiation. Furthermore, we observe the angle of the radiation wedge as it was discussed in Section \ref{S:radiation}. 
In the interaction of two solitary waves we show different scenarios of interaction, including a strong interaction, where two traveling waves combine into one that  will either radiate away or blow up in finite time, depending on the total combined mass and initial geometrical configuration. 

\subsection{Globally existing solutions}\label{S:global}
We start with considering the initial data of the form 
\begin{equation}\label{E:Qdata}
u_0(x,y) = A \, Q(x,y),
\end{equation}
where $Q(x,y)$ is the solution of \eqref{E:Qc eqn} with $c=1$ (or rather its numerical approximation $Q_h$ obtained in the previous section) and the constant $A>0$. In this part we consider data such that $\|u_0\|_{L^2(\mathbb R^2)} < \|Q\|_{L^2(\mathbb R^2)}$, 
thus, we take $A<1$. For completeness, we mention that $E[u_0] = A^2(1-A) \|Q\|^2_{L^2(\mathbb R^2)}$ by Pohozaev identities. 

\begin{figure}[ht]
\includegraphics[width=0.32\textwidth]{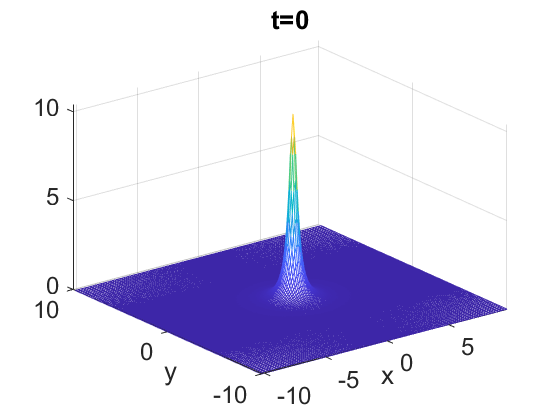}
\includegraphics[width=0.32\textwidth]{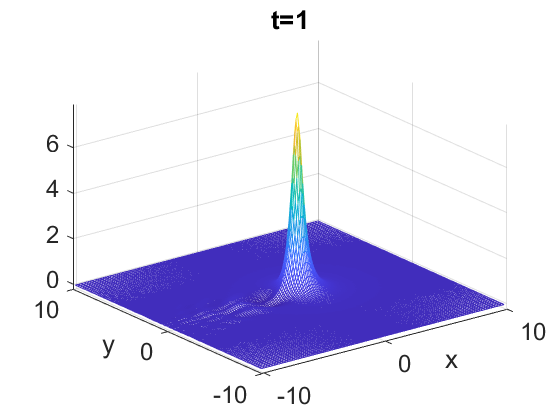}
\includegraphics[width=0.32\textwidth]{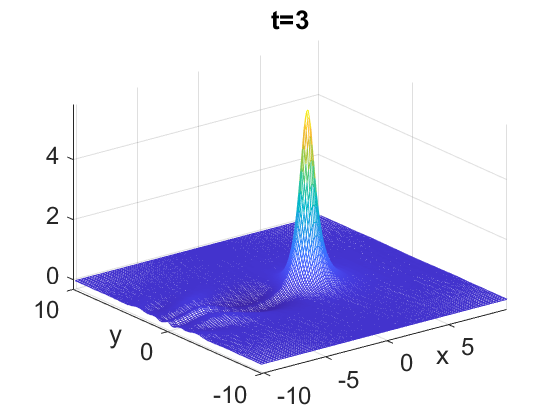}
\includegraphics[width=0.32\textwidth]{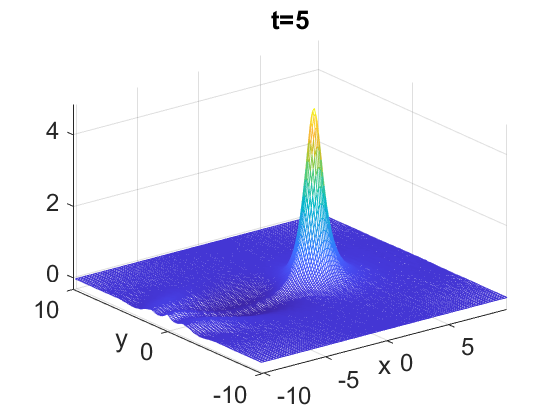}
\includegraphics[width=0.32\textwidth]{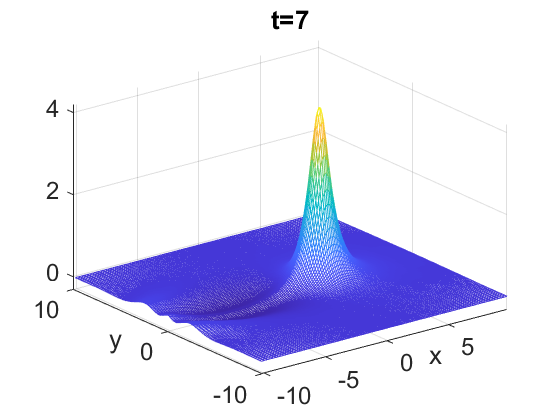}
\includegraphics[width=0.32\textwidth]{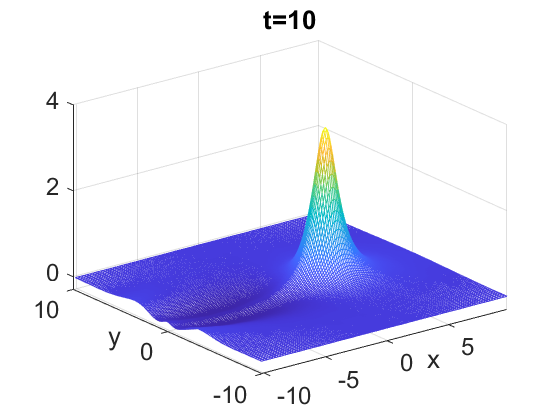}
\caption{Snapshots of the solution $u(t)$ with $u_0=0.9\, Q$.}
\label{F:profile 09Q}
\end{figure}

We first set $A = 0.9$ 
and track the time evolution of $u(t)$ up to $t=10$ (the end of the computational time in this simulation), the snapshots of this solution at times $t=0,1,3,5,7,10$ are given in Figure \ref{F:profile 09Q}. Starting from a radially symmetric initial condition at $t=0$, the main peak travels along the $x$-axis in its positive direction while decreasing in its $L^\infty$ norm (note that the height is decreasing in time in Figure \ref{F:profile 09Q}). The dispersive oscillations start developing right away, which we refer to as the radiation; the oscillations are outgoing in the negative $x$-direction. We note that the solution around the solitary wave core preserves its radial symmetry. For that we plot the cross-sections at the initial time and at the ending time of our simulations ($t=10$).

\begin{figure}[ht]
\includegraphics[width=0.32\textwidth]{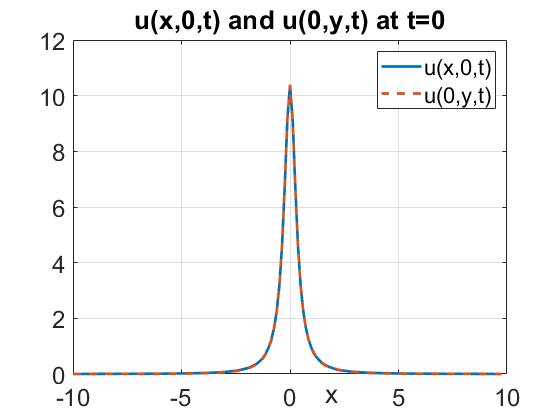}
\includegraphics[width=0.32\textwidth]{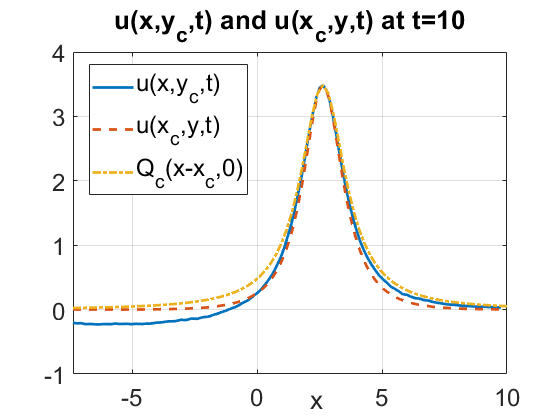}
\includegraphics[width=0.32\textwidth]{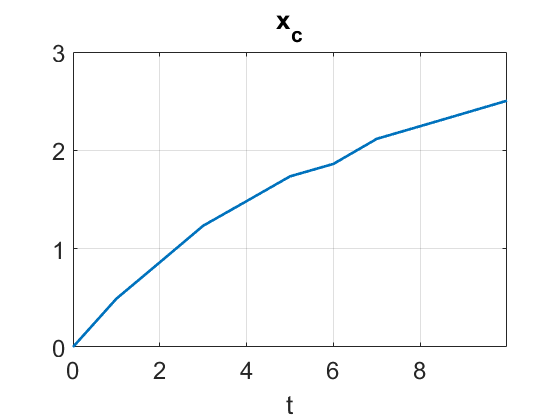}
\caption{Left: initial profile of $u_0=0.9\,Q$ given via cross-sections by $y=0$ and $x=0$ planes. Middle: cross-sections of the solution $u(t)$ at $t=10$ by $y=y_c=0$ (solid blue) and $x=x_c=2.5$ (dash red) planes, 
the rescaled and shifted profile $Q_c$, dotted yellow curve. 
Right: time evolution of the peak location $x_c$ for $0<t<10$.}
\label{F:09Q data}
\end{figure}

The cross-sections by the $y=0$ and $x=0$ planes of the initial profile are given on the left plot of Figure \ref{F:09Q data}, both coincide, since the initial profile is radially symmetric. The middle plot of Figure \ref{F:09Q data} shows both cross-sections by $y_c=0$ (solid blue line) and by 
$x_c = 2.5$ (dashed red line) at the final time of this simulation $t=10$. By $(x_c,y_c)$ we denote the coordinate of the peak of the solution (at a given time), i.e., 
\begin{equation}\label{E:location}
\|u\|_{L^{\infty}(\mathbb{R}^2)}=|u(x_c,y_c)|.
\end{equation} 
Note that in the middle plot the profile from the $x_c=2.5$ cross-section (dash red curve)  is intentionally shifted to the right to show the symmetry of the profile at $t=10$ (otherwise, the peak in this slice (dash red curve) would be at $y=0$). On the same graph we also plot the rescaled and shifted profile of $Q$, that is, $Q_c(x-x_c,0)$ (dotted yellow line) to show that the solution has a good match (in the area excluding the radiation region to the left). 
The parameter $c$ is the scaling parameter as defined in \eqref{E:Qc eqn}-\eqref{E:Qc} (for the case $s=\frac12$, $m=2$)
and we compute it as follows in our simulations
\begin{equation}\label{E:c-parameter}
c = \frac{\|u(t_{max}) \|_{L^\infty}}{\|Q\|_{L^\infty}},
\end{equation}
where $t_{max}$ is the maximal time in our computations.

\begin{figure}[ht]
\includegraphics[width=0.32\textwidth]{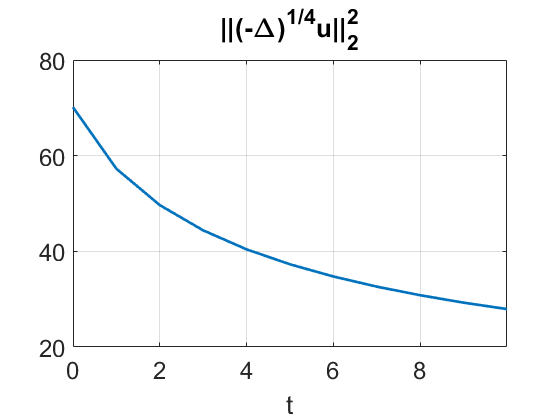}
\includegraphics[width=0.32\textwidth]{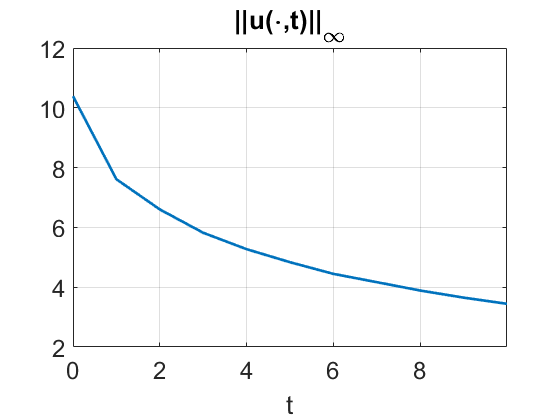}
\includegraphics[width=0.32\textwidth]{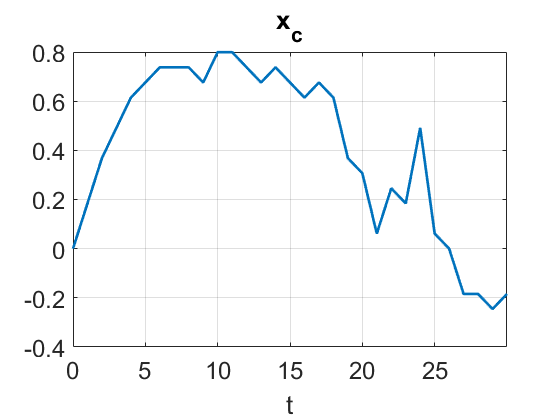}
\caption{Left: time evolution of the kinetic energy for $u_0=0.9Q$. Middle: time dependence of $\|u(t)\|_{L^{\infty}}$ for $u_0=0.9Q$. Right: time evolution of the peak location $x_c$ for $u_0=0.85Q$: observe that the peak stops traveling to the right (in the positive $x$-direction) and then moves in the opposite direction (the solution eventually radiates). A similar behavior is expected for $u_0=0.9Q$. 
} 
\label{F:09Q data 2}
\end{figure}

From the right graph in Figure \ref{F:09Q data} it seems that the solution approaches a rescaled solitary wave that is traveling to the right of the $x$-axis with the decreasing height and decreasing speed shedding some radiation in the negative $x$-direction. It is plausible to suppose that this asymptotic behavior continues (as we showed in (C2) of Theorem \ref{propcriti} that solutions (at least sufficiently smooth) with the mass under the threshold are uniformly $H^s$ bounded globally in time), however, this is not the case. For this specific initial condition $u_0=0.9\, Q$ it is challenging to track reliably the evolution beyond $t_{max}=10$, therefore, we consider slightly smaller initial amplitude $A$ in \eqref{E:Qdata}. We are able to track the time evolution of $u_0 = 0.85\, Q$ (as well as smaller $A$) and on the right graph of Figure \ref{F:09Q data 2} we show the trajectory of $x_c$. 
We note that the peak stops traveling to the right when the location $x_c$ stops around $t=10$ and, after a short pause (the profile at that time has good matching with the solitary wave $Q_c$ as in the middle graph of Figure \ref{F:09Q data}), starts moving to the left (though sometimes moving forward and again backward, this is due to dispersive oscillations that can create double peaks, e.g. see top right of Figure \ref{F:1x2 data}), and then gets dispersed into the radiation. 
\bigskip

We next consider initial data that decays slower than $Q$ (recall that $Q$ decays as $1/|x|^3$) 
\begin{equation}\label{ID:square-decay}
u_0(x,y)=\frac{A}{1+x^2+y^2}, \quad A>0.
\end{equation}
Noting that $ \|u_0\|_{L^2(\mathbb R^2)}^2= A^2 \,\pi$, we obtain the threshold value for $A$, i.e., when $\|u_0\|_{L^2(\mathbb R^2)} = \|Q\|_{L^2(\mathbb R^2)}$, or equivalently,   $A_{th} = \|Q\|_{L^2(\mathbb R^2)}/{\sqrt \pi} \approx 3.7$, 
where we used the value for the norm of $Q$ from Table \ref{T:Q mass}. 

\begin{figure}[ht]
\includegraphics[width=0.32\textwidth]{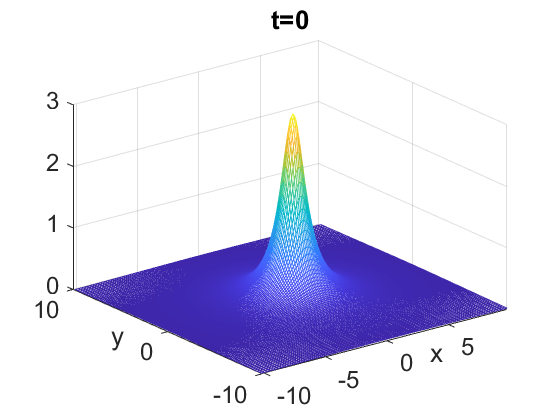}
\includegraphics[width=0.32\textwidth]{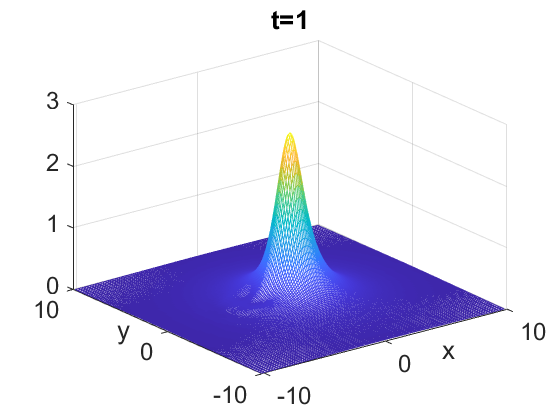}
\includegraphics[width=0.32\textwidth]{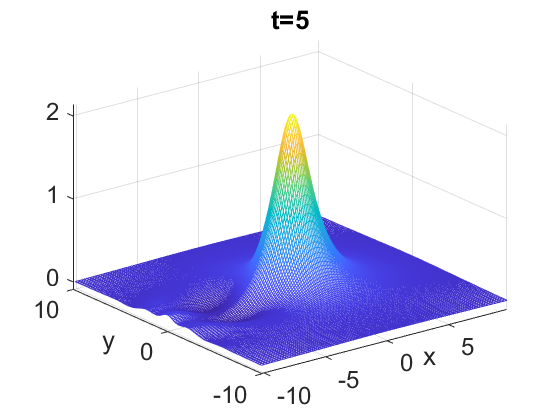}
\includegraphics[width=0.32\textwidth]{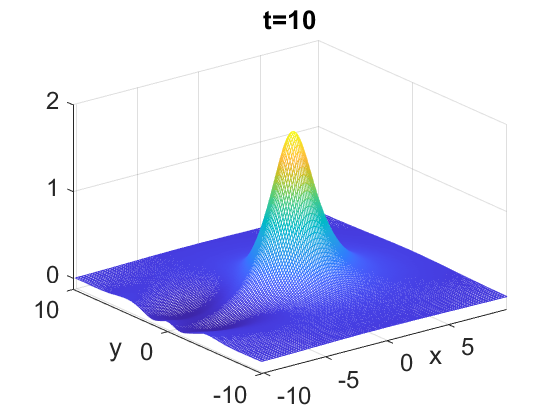}
\includegraphics[width=0.32\textwidth]{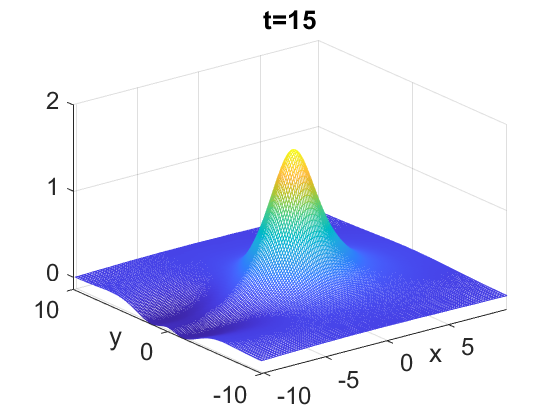}
\includegraphics[width=0.32\textwidth]{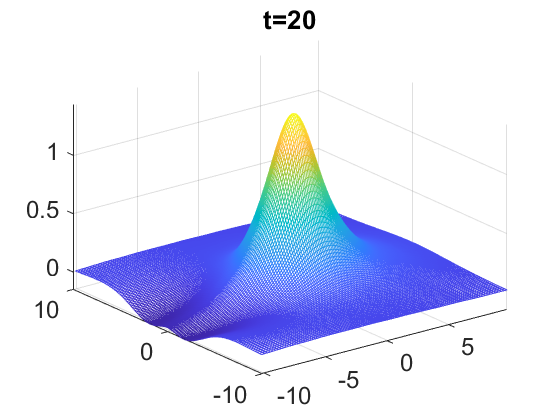}
\includegraphics[width=0.32\textwidth]{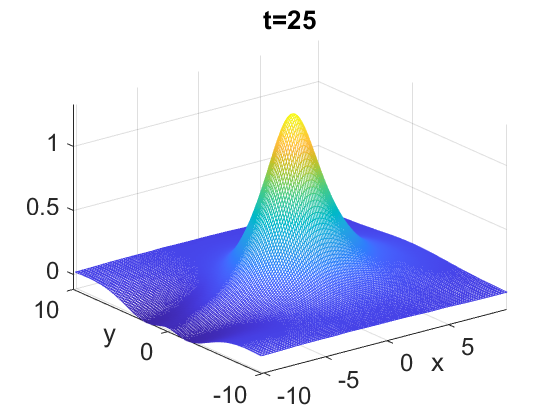}
\includegraphics[width=0.32\textwidth]{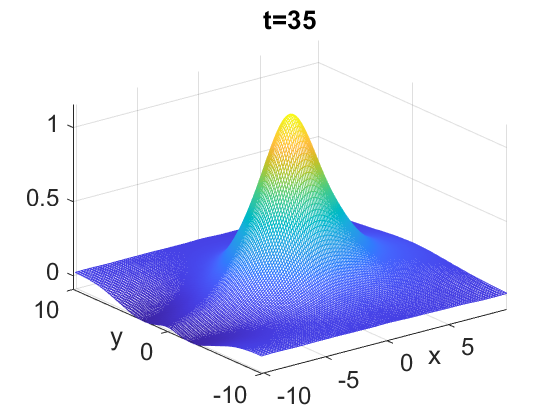}
\includegraphics[width=0.32\textwidth]{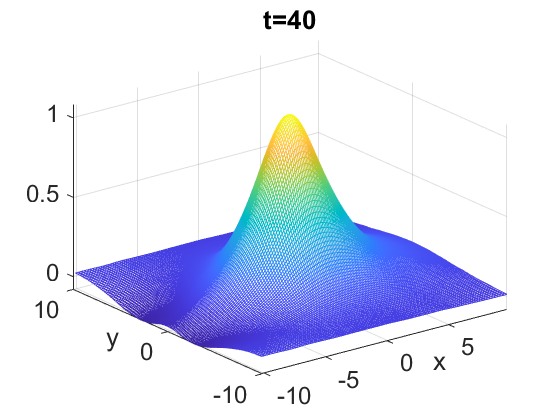}
\caption{Snapshots of the solution $u(t)$ with $u_0=\frac{3}{1+(x^2+y^2)}$.}
\label{F:profile x2}
\end{figure}

We now take $A = 3$, so that $\|u_0\|_{L^2(\mathbb R^2)} < \|Q\|_{L^2(\mathbb R^2)}$ (we also compute $E[u_0] \approx 2.14$), and track its time evolution. Figure \ref{F:profile x2} shows the snapshots of $u(t)$ at times $t=0, 1, 5, 10, 15, 20, 25, 30, 35, 40$. The height is decreasing while the location of the peak is not moving significantly for some time, there is some shift in the positive $x$-direction around $t=20$ (see tracking of $x_c$ in the middle subplot of Figure \ref{F:x2 data 2}), the radiation develops immediately in the negative $x$-direction, and the peak location after $t = 30$ starts moving to the left, or in the negative $x$-direction. On the left of Figure \ref{F:x2 data} the plot shows that the $L^\infty$ norm of the solution decreases in time. 

\begin{figure}[ht]
\includegraphics[width=0.32\textwidth]{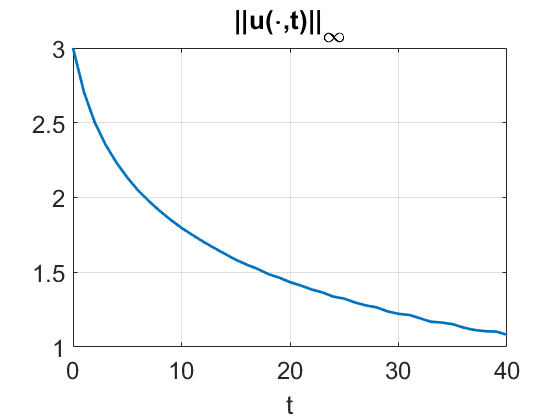}
\includegraphics[width=0.32\textwidth]{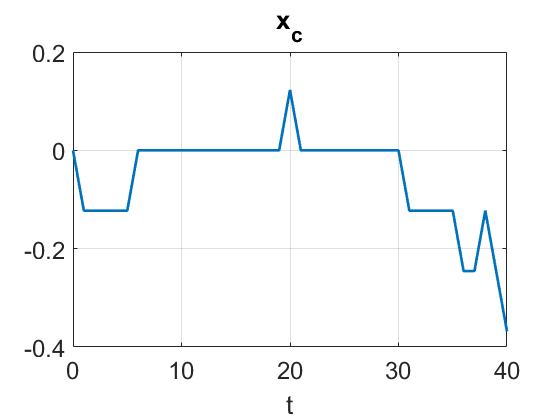}
\includegraphics[width=0.32\textwidth]{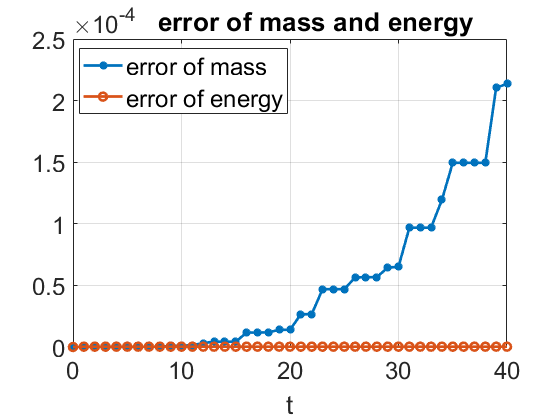}
\caption{Evolution of $u_0=\frac{3}{1+x^2+y^2}$: time dependence of $\|u(t)\|_{L^{\infty}}$ (left), trajectory of $x_c$ in time (middle), errors of conserved quantities (right).}
\label{F:x2 data 2}
\end{figure}

\begin{figure}[ht]
\includegraphics[width=0.32\textwidth]{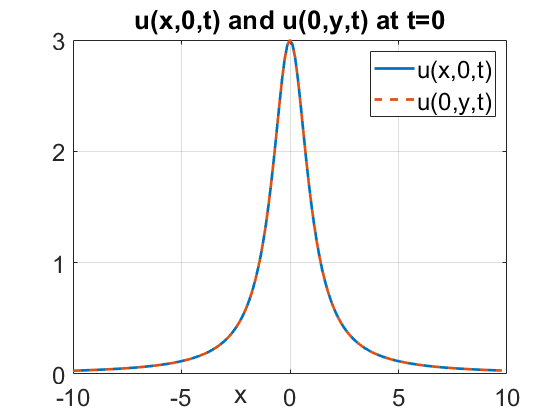}
\includegraphics[width=0.32\textwidth]{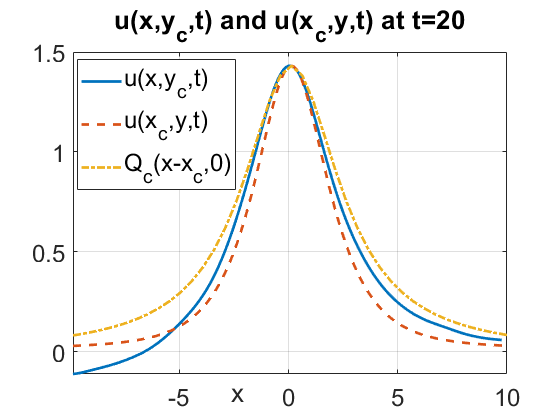}
\includegraphics[width=0.32\textwidth]{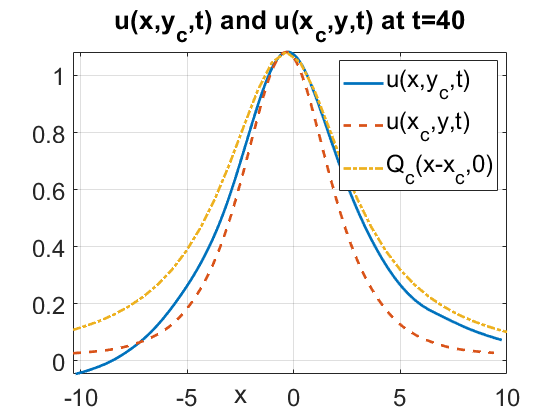}
\caption{Cross-sections of the solution $u(t)$ with $u_0=\frac{3}{1+x^2+y^2}$ at different times. Left: $t=0$, cross-sections by $y=0$ and $x=0$ planes. Middle: $t=20$, cross-sections by $y=y_c=0$ (solid blue) and $x=x_c =0.1$ (dashed red), compared with $Q_c$ (dotted yellow). Right: $t=40$, cross-sections by $y=y_c=0$ (solid blue) and $x=x_c = -0.35$ (dashed red), compared with $Q_{\tilde c}$ (dotted yellow). Note much tighter fit to $Q$ in the middle graph when the peak was traveling to the right. }
\label{F:x2 data}
\end{figure}

To understand better what happens with this solution, we check the cross-sections at various times and plot them in Figure \ref{F:x2 data}. On the left graph the radial symmetry of the initial data is obvious; in the middle at $t=20$ the peak has moved to $x_c=0.1$ and one can see some resemblance of both cross-sections to the $Q_c$ profile; on the right the peak location has moved to left to $x_c=-0.35$ (at the final computational time $t=40$) and the fitting to the $Q_c$ is much less than in the middle graph for both cross-sections. This indicates that when the peak is traveling to the right, it is trying to approach the rescaled ground state profile, while when the peak of the solution is traveling to the left, it goes into the dispersive oscillatory behavior and, of course, no profile matching is expected. 
\medskip



\begin{figure}[ht]
\includegraphics[width=0.32\textwidth]{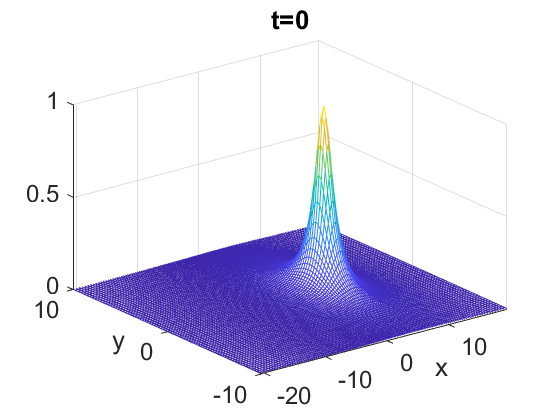}
\includegraphics[width=0.32\textwidth]{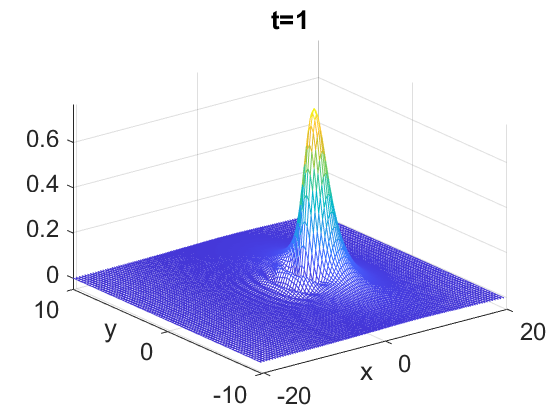}
\includegraphics[width=0.32\textwidth]{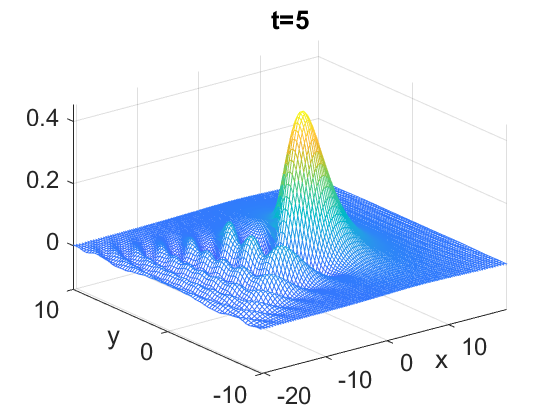}
\includegraphics[width=0.32\textwidth]{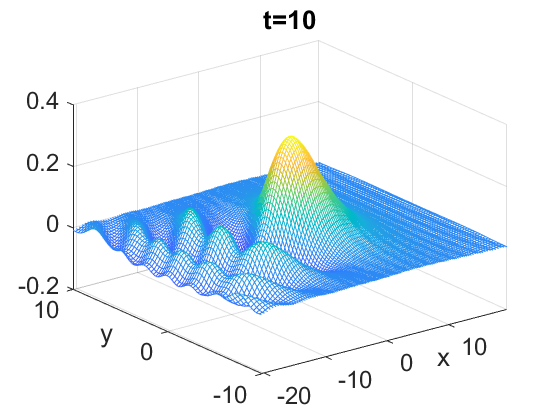}
\includegraphics[width=0.32\textwidth]{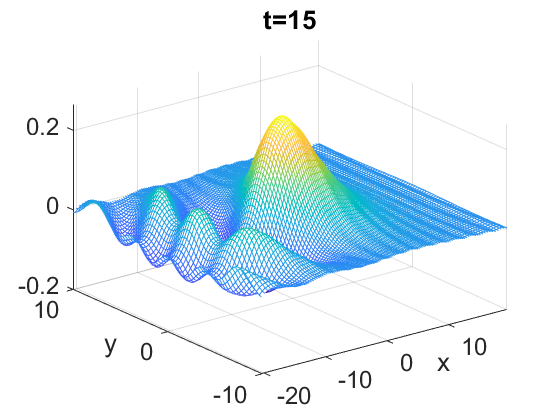}
\includegraphics[width=0.32\textwidth]{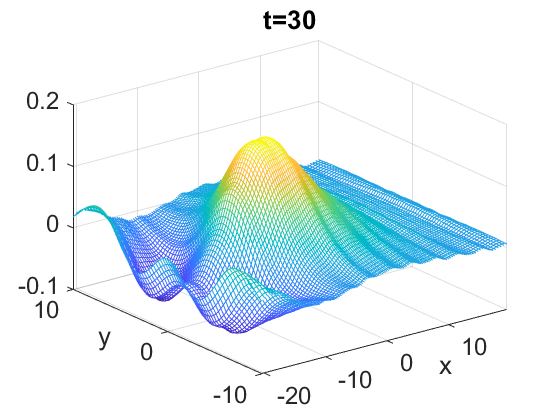}
\caption{Snapshots of the solution $u(t)$ with $u_0=\frac{1}{1+((x-5)^2+y^2)}$.}
\label{F:profile 1x2}
\end{figure}

We also investigate smaller amplitude data 
\begin{equation}\label{ID:square-decay-small}
u_0(x,y)=\frac{A}{1+(x-a)^2+y^2},
\end{equation}
with $A=1$ and a shift $a=5$ (for the computational domain purposes). The snapshots of the time evolution of this $u_0$ are provided in Figure \ref{F:profile 1x2}. 

\begin{figure}[ht]
\includegraphics[width=0.35\textwidth]{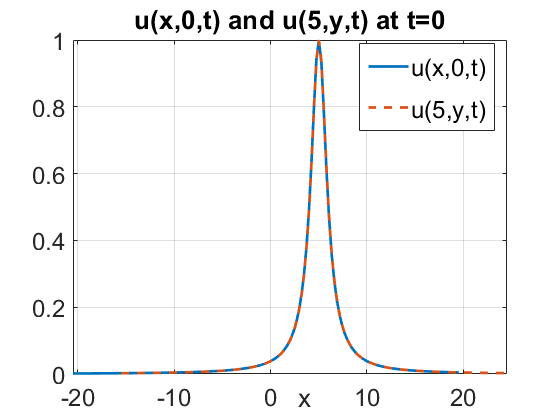}
\includegraphics[width=0.35\textwidth]{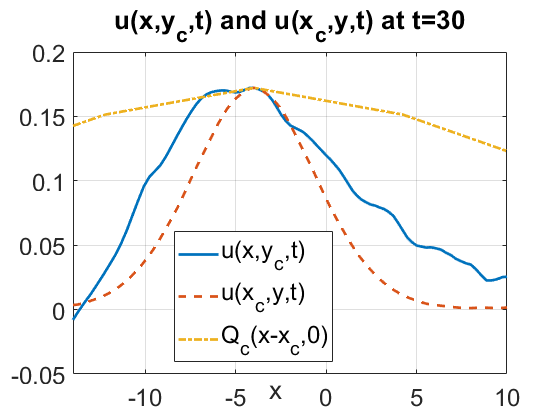}\\
\includegraphics[width=0.32\textwidth]{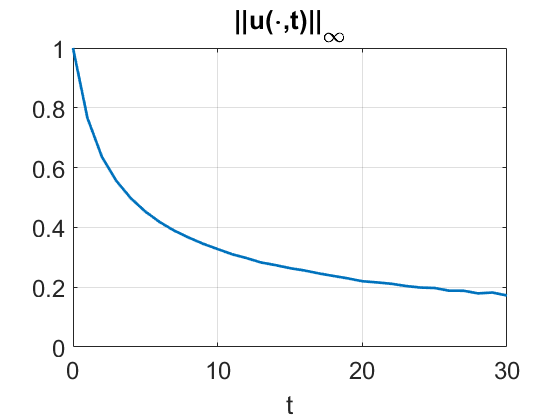}
\includegraphics[width=0.32\textwidth]{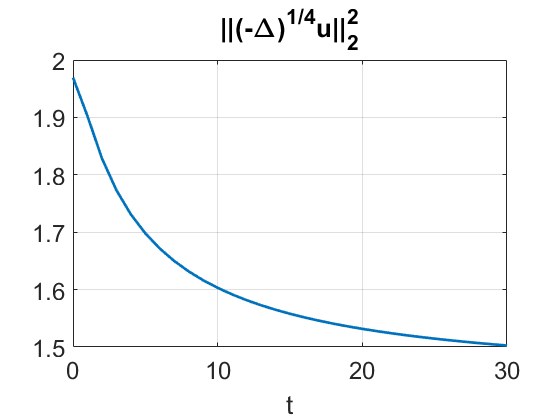}
\includegraphics[width=0.32\textwidth]{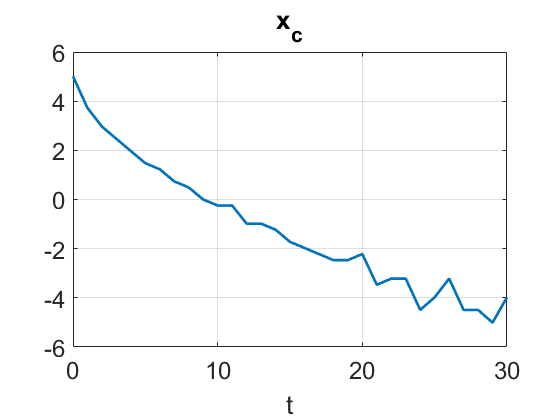}
\caption{Top left: initial profile of $u_0=\frac{1}{1+(x-5)^2+y^2}$ given via cross-sections by planes $y=0$ and $x=5$. Top right: cross-sections of the solution $u(t)$ at $t=30$ by $y=y_c=0$ (solid blue) and $x=x_c =-4$ (dashed red),
here, we shifted the second cross-section to $x_c$ to check symmetry and compare with $Q_c$ (dotted yellow). Bottom: time dependence of $\|u(t)\|_{L^{\infty}}$, $\|(-\Delta)^{1/4}u (t)\|^2_{L^2(\mathbb R^2)}$, and $x_c$ trajectory.}
\label{F:1x2 data}
\end{figure}

Unlike the  previous example (with a larger amplitude $A=3$), the peak of the solution moves in the left $x$-direction right away (see the graph of the $x_c$ trajectory in the bottom right of Figure \ref{F:1x2 data}), meaning that both the peak of the solution moves to the left and the oscillatory radiation develops immediately and disperses to the left.  
\smallskip

Furthermore, this is a good example to note the shape of the radiation region: the dispersive oscillations extend into a wedge region (for example, it can be clearly seen in the top right plot of Figure \ref{F:profile 1x2}). We investigate this more carefully in subsection \ref{S:wedge} below.
\bigskip

\newpage

Another type of data we consider is the one which has a faster decay than the ground state, that is, an exponential decay, 
\begin{equation}\label{ID:gaussian}
u_0(x,y) = A \, e^{-(x^2+y^2)}.
\end{equation} 
We consider $A < A_{th} \approx 5$, since $\|u_0\|^2_{L^2(\mathbb R^2)} = \frac{\pi}2 \,A^2$.

\begin{figure}[ht]
\includegraphics[width=0.32\textwidth]{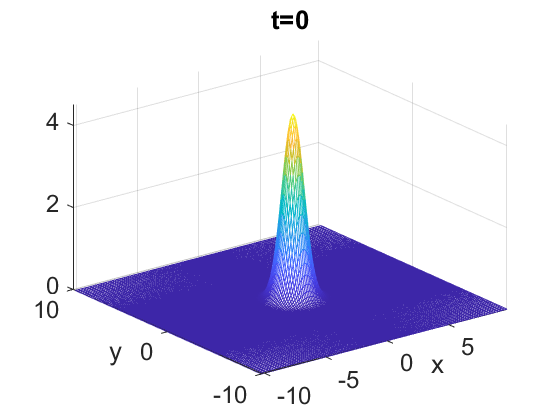}
\includegraphics[width=0.32\textwidth]{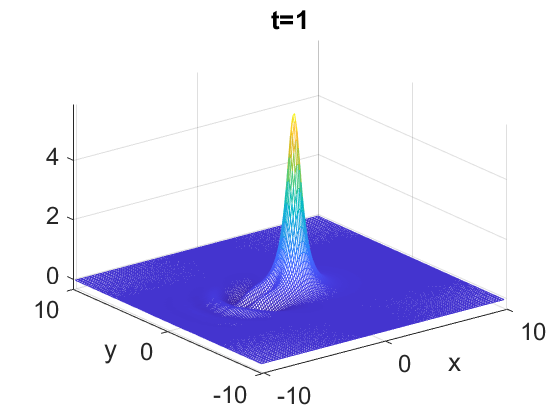}
\includegraphics[width=0.32\textwidth]{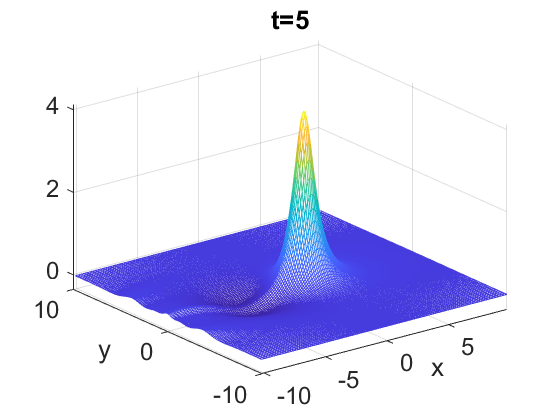}
\includegraphics[width=0.32\textwidth]{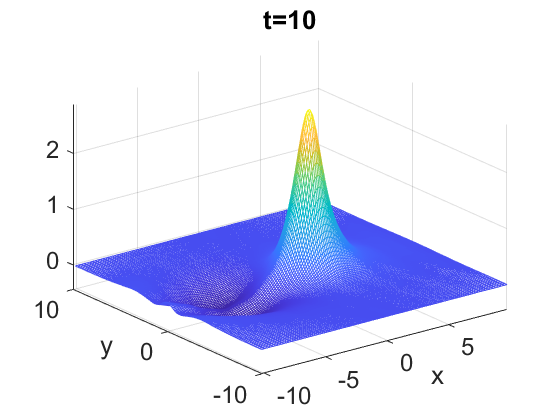}
\includegraphics[width=0.32\textwidth]{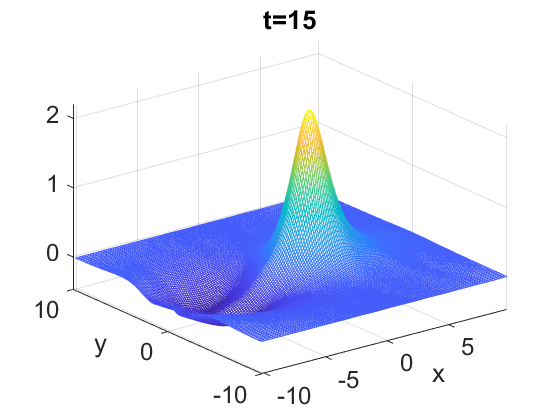}
\includegraphics[width=0.32\textwidth]{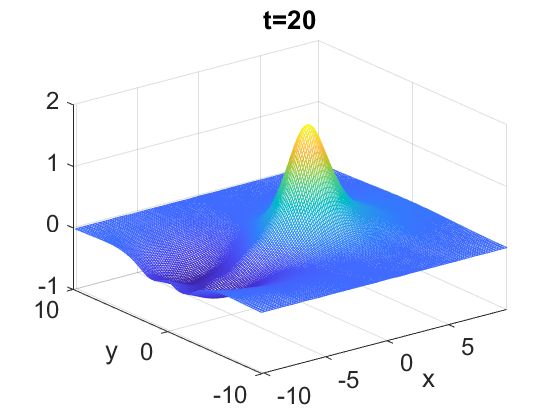}
\includegraphics[width=0.32\textwidth]{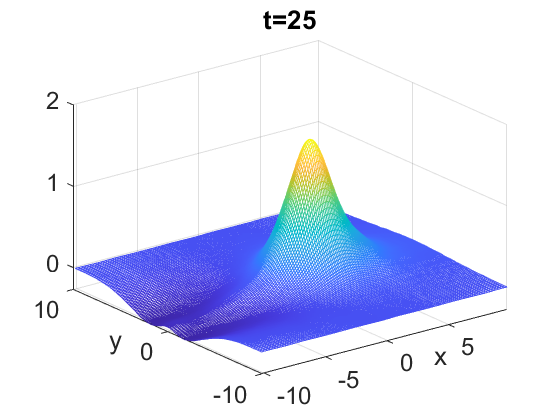}
\includegraphics[width=0.32\textwidth]{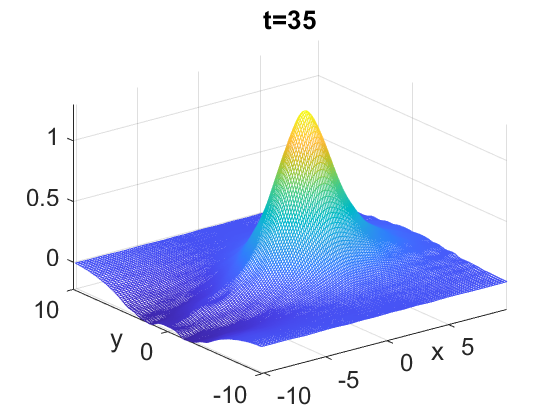}
\includegraphics[width=0.32\textwidth]{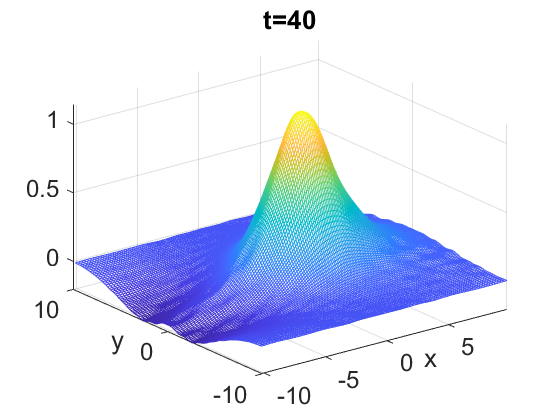}
\caption{Snapshots of the solution $u(t)$ with $u_0={4.5} \,e^{-(x^2+y^2)}$.}
\label{F:profile G}
\end{figure}

We show the snapshots of the solution $u(t)$ with $A=4.5$ (here, $E[u_0] \approx 4.23$) in Figure \ref{F:profile G}.
One can easily notice that the solution starts moving to the right dispersing the radiative oscillations to the left on the $x$-axis. The height is decreasing in time, this can be seen in the snapshots and also on the left graph of Figure \ref{F:G data 2}. 
 
\begin{figure}[ht]
\includegraphics[width=0.32\textwidth]{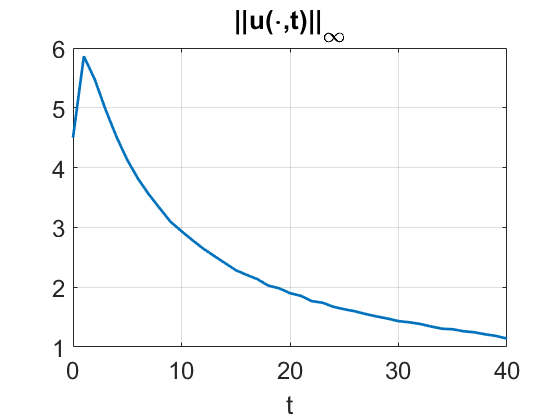}
\includegraphics[width=0.32\textwidth]{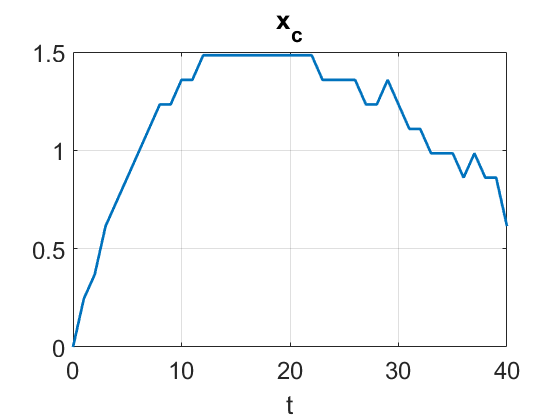}
\includegraphics[width=0.32\textwidth]{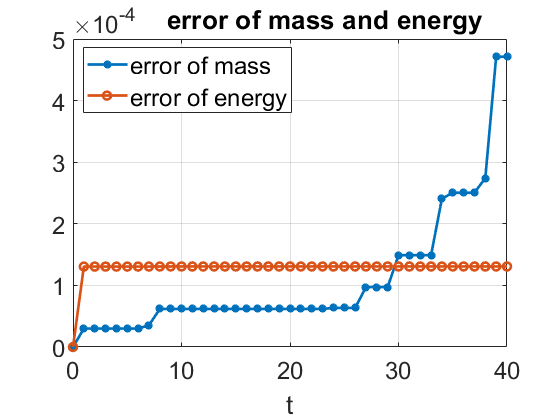}
\caption{Evolution of $u_0={4.5} \,e^{-(x^2+y^2)}$: time dependence of $\|u(t)\|_{L^{\infty}}$ (left), 
trajectory of $x_c$ in time (middle), errors of conserved quantities (right).}
\label{F:G data 2}
\end{figure}

\begin{figure}[ht]
\includegraphics[width=0.32\textwidth]{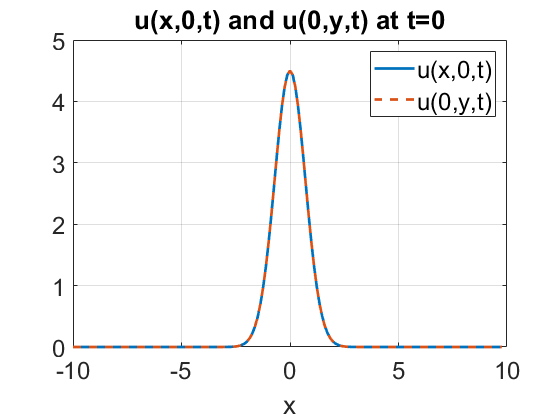}
\includegraphics[width=0.32\textwidth]{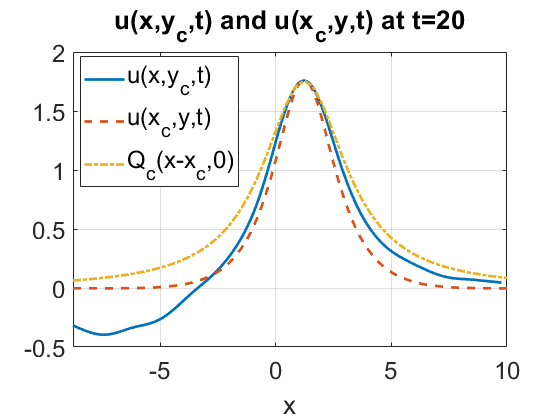}
\includegraphics[width=0.32\textwidth]{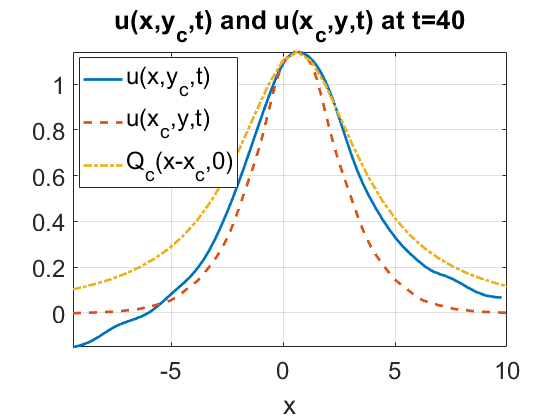}
\caption{The solution profile for $u_0=4.5 \, e^{-(x^2+y^2)}$. We can see it scatters in a radial symmetric manner and $\|u\|_{L^{\infty}(\mathbb{R}^2)}$ is also decreasing in time.}
\label{F:G data}
\end{figure}

In the middle plot of Figure \ref{F:G data} we track the location of the peak $x_c$. 
One can note that up to about time $t=12$ the solution moves to the right, though the peak's location $x_c$ stops for some time around $x_c = 1.5$ up to $t = 22$, and then starts moving in the negative direction. We continue our simulations until $t=40$ (note that the error of the energy is stable, but the error in the mass conservation is starting to increase after $t=30$, therefore, for accuracy we stop our simulations at $t=40$). For comparison we plot the cross-sections at times $t=0, 20, 40$ in Figure \ref{F:G data}, observing some tightness and closeness to $Q_c$ (around the peak location) up to about time $t=20$ and then getting further away from the ground state profile and becoming asymmetric, especially in the cross-section by $y=0$ (even around the peak location). 
\bigskip

\newpage
.

\newpage

We next check the non-radial initial data of the form
\begin{equation}\label{ID:nonradial}
u_0(x,y)=\frac{A}{1+\big(x^2+(0.5y)^2\big)^2}. 
\end{equation}
We note that $\|u_0\|_{L^2(\mathbb R^2)} =  \frac{\pi}{\sqrt 2} \, A$, 
therefore, to check our conjecture, we consider $A < A_{th}$
$\approx 2.9$.

\begin{figure}[ht]
\includegraphics[width=0.32\textwidth]{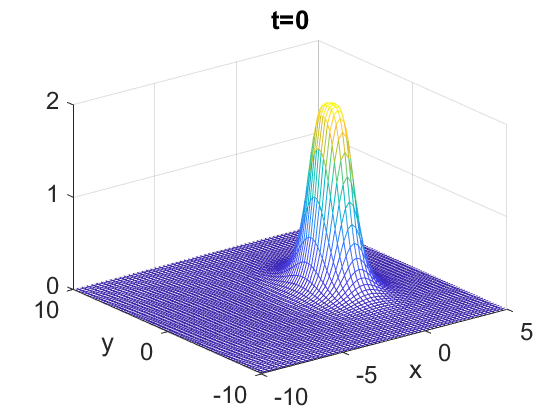}
\includegraphics[width=0.32\textwidth]{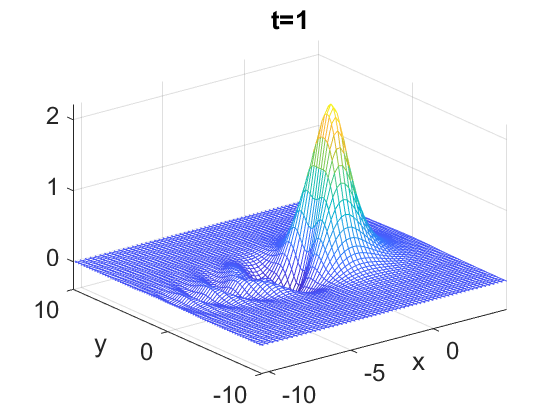}
\includegraphics[width=0.32\textwidth]{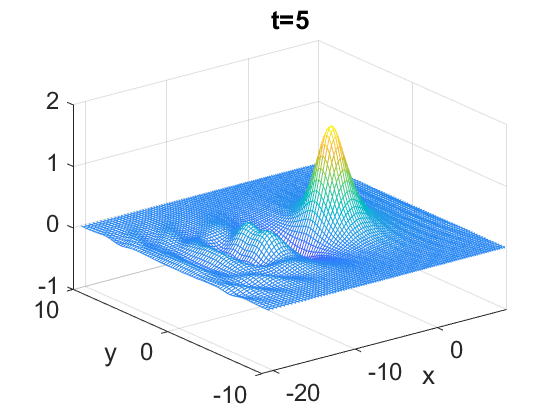}
\includegraphics[width=0.32\textwidth]{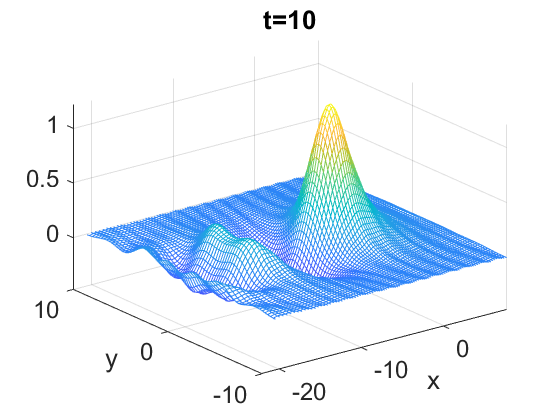}
\includegraphics[width=0.32\textwidth]{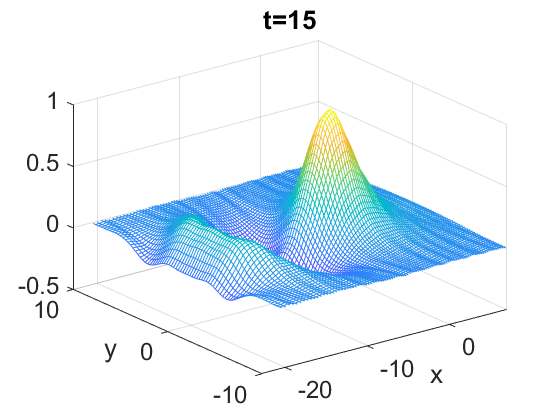}
\includegraphics[width=0.32\textwidth]{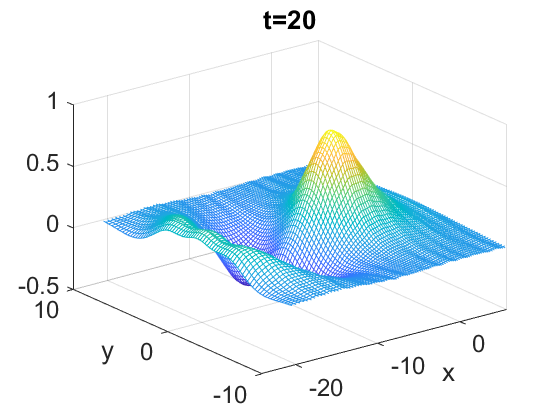}
\caption{Snapshots of the solution $u(t)$ with $u_0=\frac{A}{1+(x^2+(0.5y)^2)^2}$, $A=2$.}
\label{F:profile x42}
\end{figure}

The snapshots of the time evolution at times $t = 0, 1, 5, 10, 15, 20$ for the initial condition \eqref{ID:nonradial} with $A=2$ are shown in Figure \ref{F:profile x42}. One can note that the solution decreases in the height, and comparing the snapshots, it is possible to notice that the peak of this solution is moving in the negative $x$-direction. We show  in the right graph of Figure \ref{F:profile x42 data} that $\|u(t)\|_{L^{\infty}(\mathbb{R}^2)}$ is decreasing in time (after a small increase initially). The left and middle graphs show the solution profile sliced in $x$ and $y$ directions to see how the non-radial data evolves in time and becomes more radially symmetry (compare the solid blue and dashed red curves in the middle plot of Figure \ref{F:profile x42 data}, which are much closer to each other around the core of the profile, indicating more radially symmetric evolution, although being further away from the rescaled $Q$); one notices that the dispersive oscillations appear right away and radiate in a wedge around the negative $x$-axis. 

\begin{figure}[ht]
\includegraphics[width=0.32\textwidth]{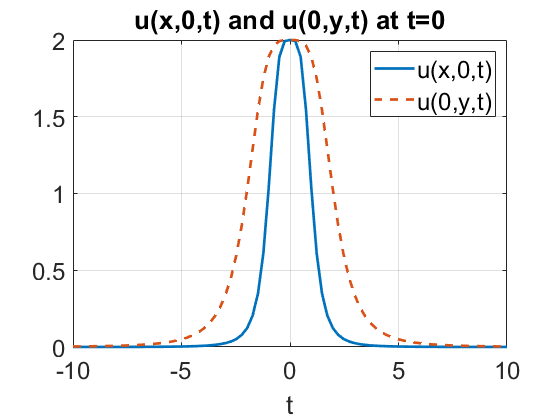}
\includegraphics[width=0.32\textwidth]{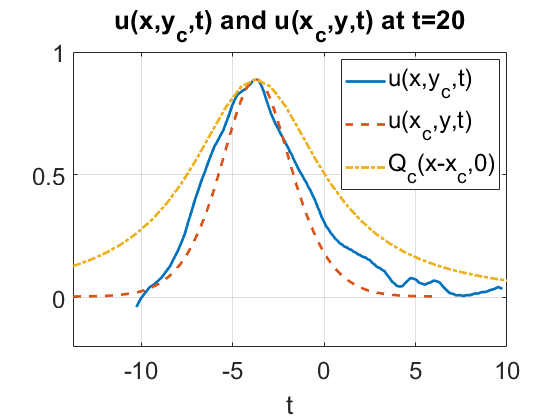}
\includegraphics[width=0.32\textwidth]{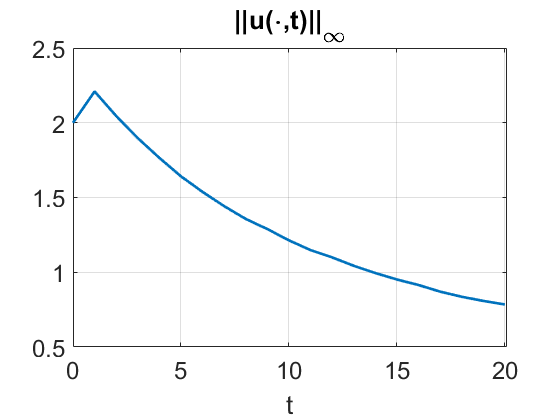}
\caption{$u_0=\frac{2}{1+(x^2+(0.5y)^2)^2}$. Left and Middle: $u(x,y_0,t)$ and $u(x_0,y,t)$ at different time $t$, where $(x_0,y_0)$ is the coordinate for $\max |u(x,y)|$. One can see it scatters to the radially symmetric profile. Right: $\|u\|_{L^{\infty}(\mathbb{R}^2)}$ decreases in time $t$.}
\label{F:profile x42 data}
\end{figure}


\subsection{Angle of radiation wedge}\label{S:wedge}

\begin{figure}[ht]
\includegraphics[width=0.32\textwidth]{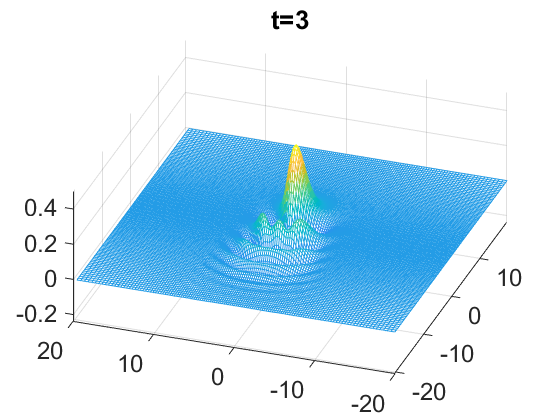}
\includegraphics[width=0.32\textwidth]{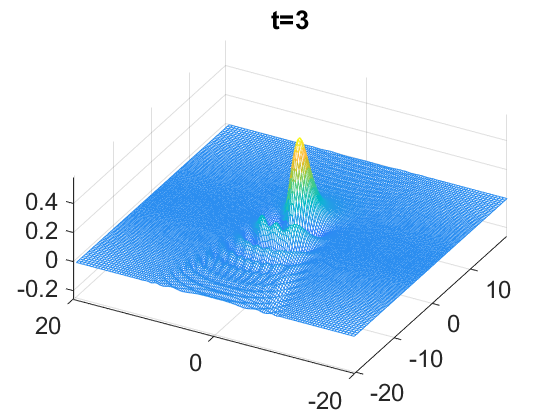}
\includegraphics[width=0.32\textwidth]{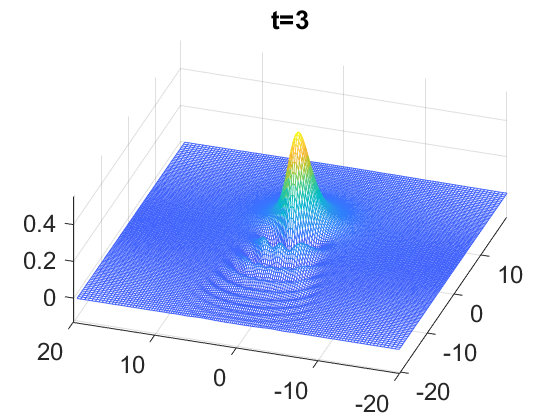}
\includegraphics[width=0.32\textwidth]{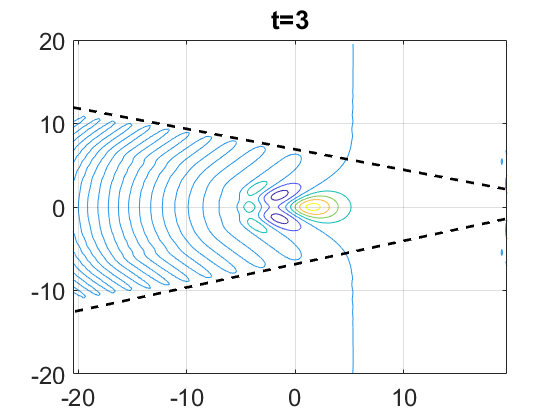}
\includegraphics[width=0.32\textwidth]{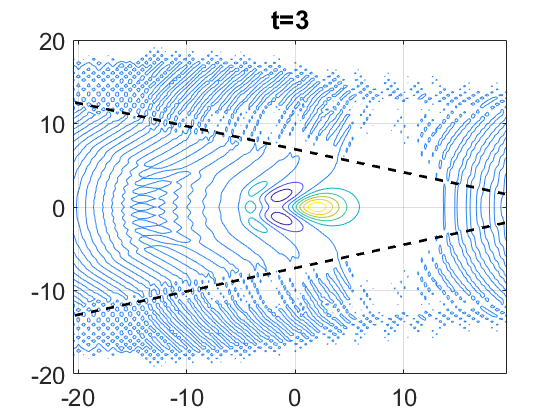}
\includegraphics[width=0.32\textwidth]{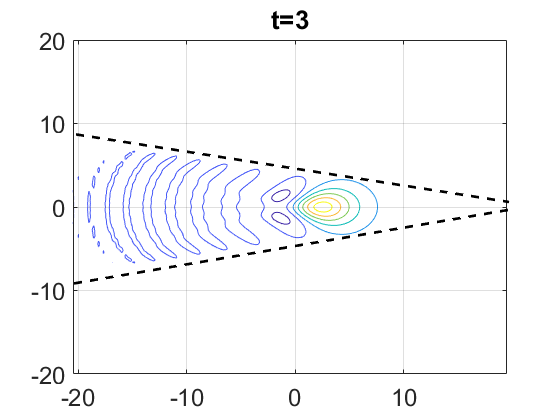}
\caption{$t=3$: radiation region snapshots for different data (top); contour plots with an angle estimation of the radiation wedge (black dash lines) (bottom). 
Left: $u_0=e^{-(x^2+y^2)}$. 
Middle: $u_0=\frac1{1+(x^2+y^2)^2}$.  
Right: $u_0=\frac1{1+x^2+y^2}$. 
}
\label{F:angle t3}
\end{figure}
\begin{figure}[ht]
\includegraphics[width=0.32\textwidth]{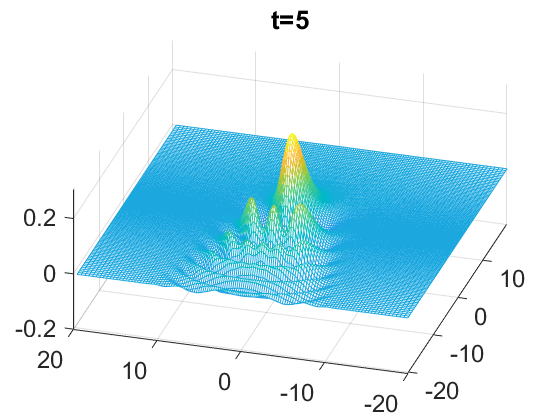}
\includegraphics[width=0.32\textwidth]{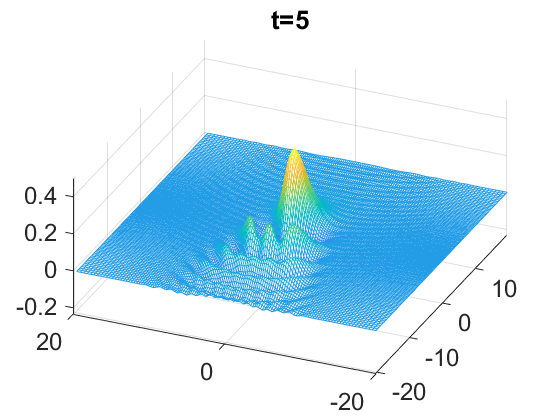}
\includegraphics[width=0.32\textwidth]{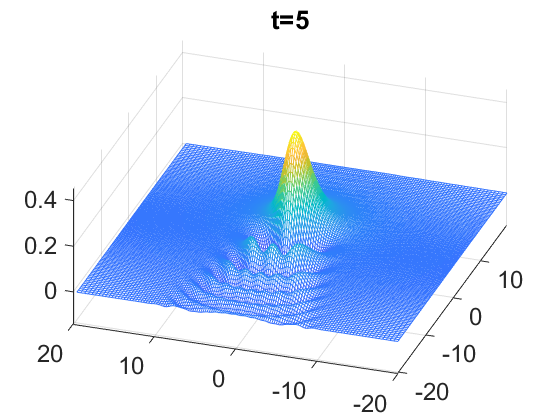}
\includegraphics[width=0.32\textwidth]{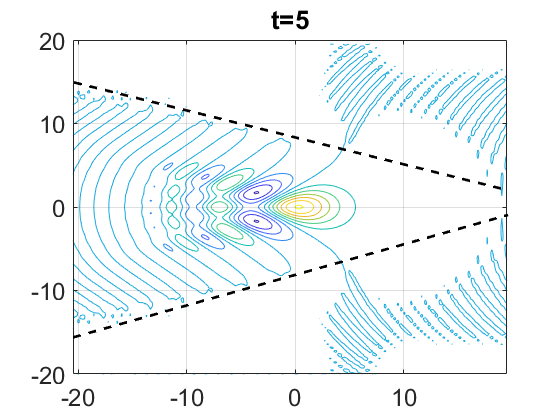}
\includegraphics[width=0.32\textwidth]{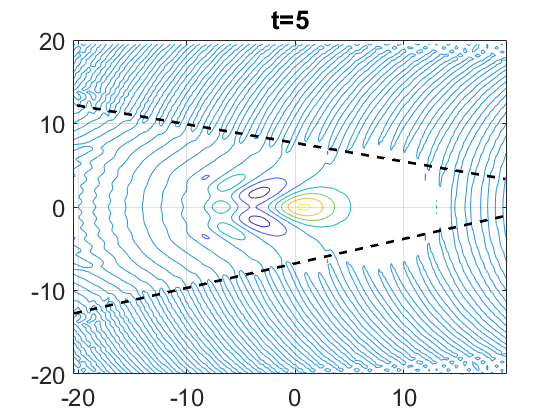}
\includegraphics[width=0.32\textwidth]{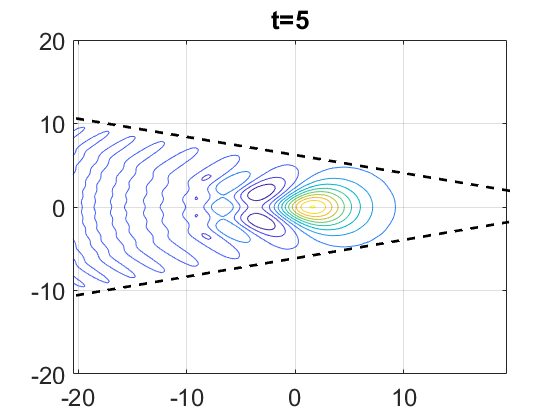}
\caption{$t=5$: radiation region snapshots for different data (top); contour plots with an angle estimation of the radiation wedge (black dash lines) (bottom). 
Left:  $u_0=e^{-(x^2+y^2)}$. 
Middle: $u_0=\frac1{1+(x^2+y^2)^2}$.  
Right: $u_0=\frac1{1+x^2+y^2}$. 
}
\label{F:angle t5}
\end{figure}

We investigate the radiative region of solutions, in particular, the angle of the wedge that was obtained in Section \ref{S:radiation}. For that we consider the following initial data 
\begin{equation}\label{E:three-data}
u_0(x,y)=e^{-(x^2+y^2)}, \qquad u_0(x,y)=\frac1{1+(x^2+y^2)^2}, \qquad  u_0(x,y)=\frac1{1+(x^2+y^2)}.
\end{equation}
Figure \ref{F:angle t3} and \ref{F:angle t5} show the solutions profiles for each of the above initial condition at the times $t=3$ and $t=5$. The top row  in both figures shows a snapshot of the solution at either $t=3$ or $t=5$ of the data \eqref{E:three-data} 
in the left, middle and right columns, respectively. The bottom row offers the contour views and shows an estimate for the angle of radiation wedge with 
the black-dash lines. 

For the simplicity of interpretation, we simply measure the tangent of the angle. One can see that for $u_0=e^{-(x^2+y^2)}$, the dispersive oscillations are restricted to the angle $\theta$ with a crude estimate of $\theta \approx \arctan(13/40) \approx 18.00^{\circ}$. For $u_0=\frac1{1+(x^2+y^2)^2}$, the angle is $\theta \approx \arctan(11/40) \approx 15.38^{\circ}$. For $u_0=\frac1{1+(x^2+y^2)}$, which is the slowest decaying among the considered initial data, the angle is $\theta \approx \arctan(10/40) \approx 14.04^{\circ}$. 
Comparing these observations with the wedge in Figure \ref{F:angle} and in \eqref{E:angleHBO}  in Section \ref{S:radiation}, we observe that our angle approximations  
lie within the angle $19.42^o$ (or $\tan \theta = 2\sqrt 2$), thus, confirming the result of Section \ref{S:radiation}. (Note that the wedge is traveling with the solitary wave in time, and therefore, is shifting with the solution to the right, or in the positive $x$-direction.)

\subsection{Blow-up solutions}\label{Blow-up}
Here, we study the second part of Conjecture \ref{C:critical}, a possibility to develop a finite time blow-up in the case when $\|u_0\|_{L^{2}(\mathbb{R}^2)} > \|Q\|_{L^{2}(\mathbb{R}^2)}$. (We remark that numerically it is impossible to study exactly the threshold case $\|u_0\|_{L^{2}(\mathbb{R}^2)} = \|Q\|_{L^{2}(\mathbb{R}^2)}$ and we have already demonstrated that various data with $\|u_0\|_{L^{2}(\mathbb{R}^2)} < \|Q\|_{L^{2}(\mathbb{R}^2)}$ generate global in time solutions.) 
\smallskip

\begin{figure}[ht]
\includegraphics[width=0.32\textwidth]{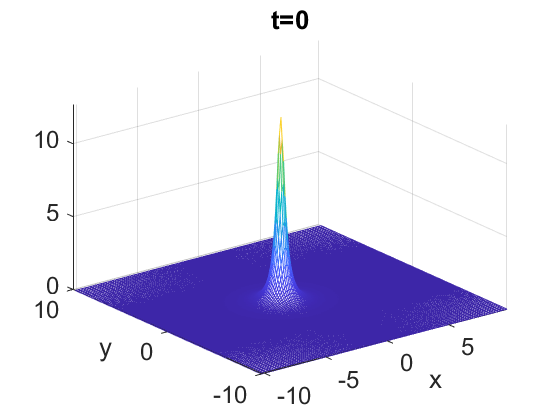}
\includegraphics[width=0.32\textwidth]{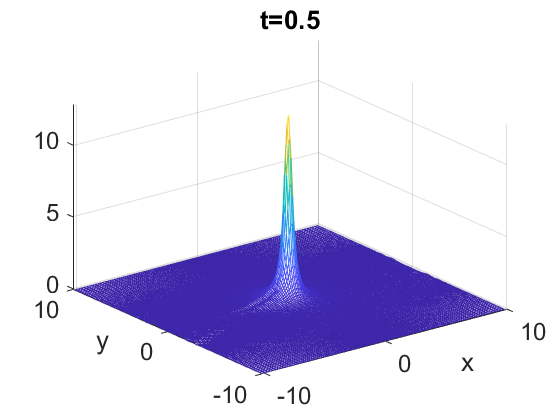}
\includegraphics[width=0.32\textwidth]{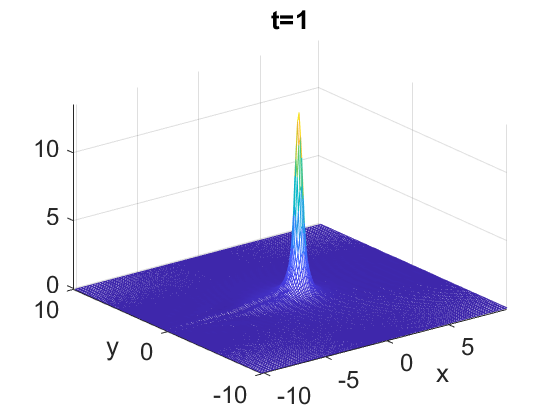}
\includegraphics[width=0.32\textwidth]{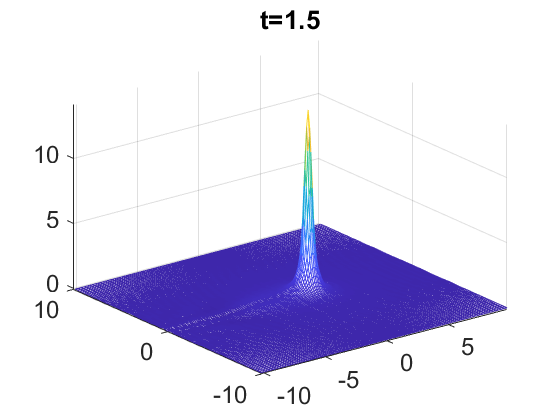}
\includegraphics[width=0.32\textwidth]{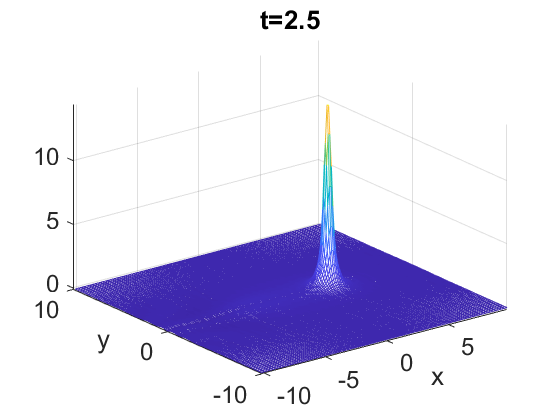}
\includegraphics[width=0.32\textwidth]{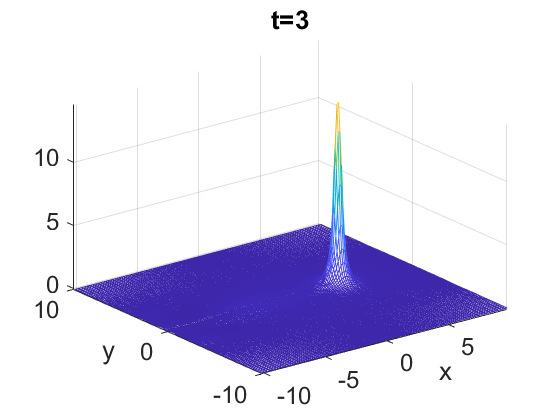}
\caption{Snapshots of the solution $u(t)$ with $u_0=1.1Q(x+1,y)$.}
\label{F:profile 11Q B}
\end{figure}

We start with the initial data of the perturbed ground state \eqref{E:Qdata}, noting that for any $A>1$, the energy of such data is negative, $E[AQ]<0$. 
Fixing $A = 1.1$, i.e., $u_0(x,y)=1.1 \, Q(x,y)$, we compute the time evolution $u(t)$ and plot the details in Figures \ref{F:profile 11Q B}-\ref{F:11Q data B}. For the purposes of staying within the (symmetrical) computational domain, we shift this initial condition in the negative $x$-direction. Figure \ref{F:profile 11Q B} shows snapshots of the time evolution for $u_0(x,y)=1.1\, Q(x+1,y)$ at $t=0, 0.5, 1, 1.5, 2.5, 3$. Observe that the solution becomes tighter around its peak and the height is slowly increasing in time (see also Figure \ref{F:11Q data B}). Furthermore, the peak is traveling to the right, in the positive $x$-direction. Until the time when the solution travels beyond our computational domain ($[-\alpha,\alpha]$), we observe that both the $L^\infty$ norm and the kinetic energy keeps increasing in time (blue solid line in the bottom row graphs of Figure \ref{F:11Q data B}). This gives an indication of possible blow-up, however, since the initial data is very close to the threshold, 
our current numerical simulations do not provide sufficient information in this case.

\begin{figure}[ht]
\includegraphics[width=0.44\textwidth]{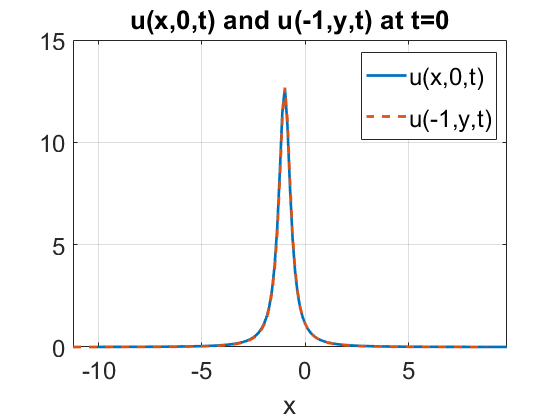}
\includegraphics[width=0.45\textwidth]{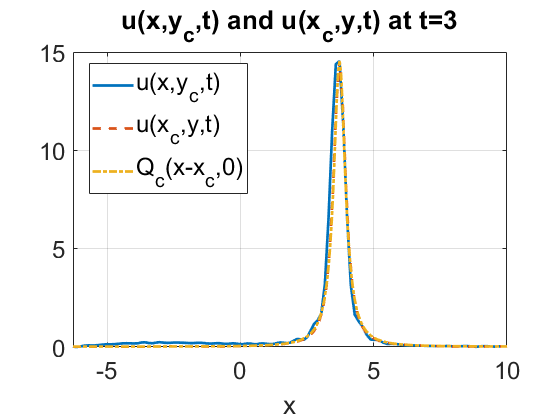}\\
\includegraphics[width=0.44\textwidth]{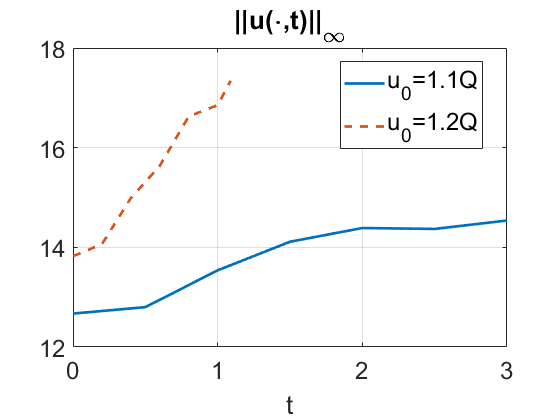}
\includegraphics[width=0.45\textwidth]{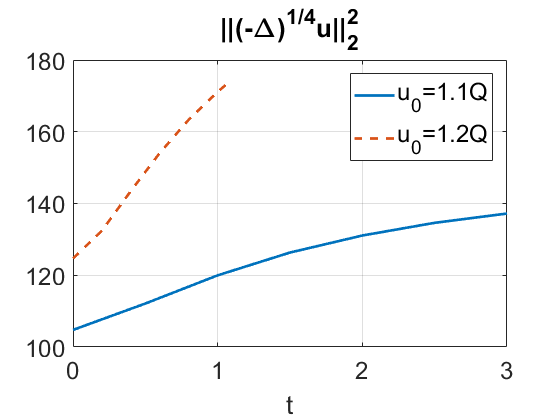}
\caption{Top: cross-sections of the solution $u(t)$ with $u_0=1.1\,Q(x+1,y)$ at $t=0$ and $t=3$ (and good matching with the rescaled ground state $Q_c$). Bottom: the norm growth in time for the time evolution with $u_0 = 1.1 Q$ in solid blue and with $u_0=1.2Q$ in dash red, both solutions indicate a blow-up behavior.}
\label{F:11Q data B}
\end{figure}

Therefore, we modify slightly the amplitude in the initial condition and  consider $u_0(x,y) = 1.2 \, Q(x+1,y)$, for which we track the norms $\|u(t)\|_{L^\infty(\mathbb R^2)}$ as well as $\|(-\Delta)^{1/4} u(t)\|_{L^2(\mathbb R^2)}$ for a comparison. In the bottom graphs of Figure \ref{F:11Q data B}, one can see that the time evolution in this case blows up almost immediately (around the time $t=1.1$). Since the quantities $\|u(t)\|_{L^\infty(\mathbb R^2)}$ and $\|(-\Delta)^{1/4} u(t)\|_{L^2(\mathbb R^2)}$ have a similar behavior in both cases of initial condition ($A=1.1$ and $1.2$), we can draw the conclusion that the solution with $u_0=AQ$ blows up for $A>1$. 

We take a step further in studying the blow-up behavior of solutions in this equation and look at the blow-up profiles. For example, an excellent matching of the cross-sections of the solution generated by $u_0=1.1\,Q$ at time $t=3$ can be observed on the right top plot of Figure \ref{F:11Q data B}. In the case of $A=1.2$, we obtain a similar matching. This indicates that a stable critical blow-up in the equation \eqref{E:HBO} follows a self-similar dynamics with the ground state profile. 
\medskip

We next test the non-radial data of the form
\begin{equation}\label{E:nonradial-blowup}
u_0(x,y)=\frac{A}{1+\big( (x+a)^2+(0.5y)^2 \big)^2},
\end{equation}
which has the norm $\|u_0\|_{L^2(\mathbb R^2)} = \frac1{\sqrt 2} \, A \, \pi$.
The threshold value for $A$ is $A_{th} \approx 2.9$. 

We study the data \eqref{E:nonradial-blowup} with $A>A_{th}$ (and the shift $a=2.5$ for convenience of graphing) and observe the blow-up behavior. For example, for $A=4.5$ the snapshots of the time evolution at times $t=0, 0.2, 0.5, 1.5, 2, 2.07$ are shown in Figure \ref{F:profile x4 B}. In the following Figure \ref{F:x4 data B} we provide the cross-sections of the solution at the beginning ($t=0$) and at the last computational time before the blow-up ($t=2.07$), as well as the growth of the norms in time. One can notice that starting with a non-radially symmetric initial data (top left graph shows the asymmetry in the cross-sections), the solution evolves into a radially symmetric solitary wave with a rescaled and shifted ground state profile,  indicating the behavior of a radially symmetric self-similar blow-up dynamics (in the core region).

\begin{figure}[ht]
\includegraphics[width=0.32\textwidth]{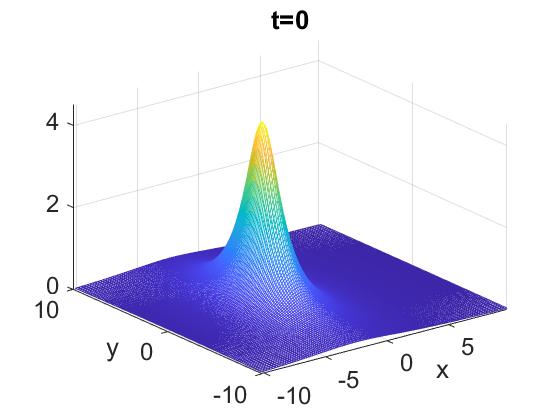}
\includegraphics[width=0.32\textwidth]{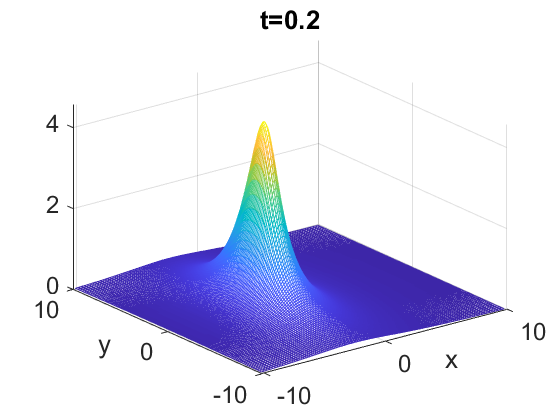}
\includegraphics[width=0.32\textwidth]{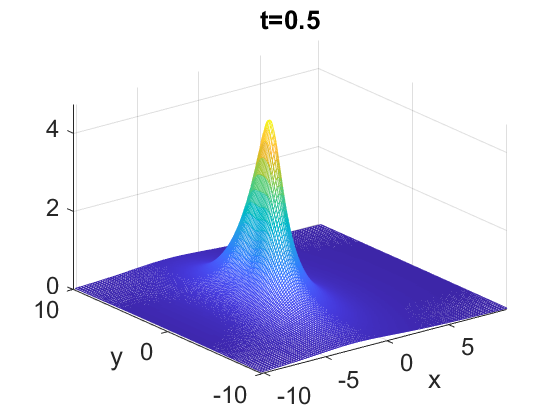}
\includegraphics[width=0.32\textwidth]{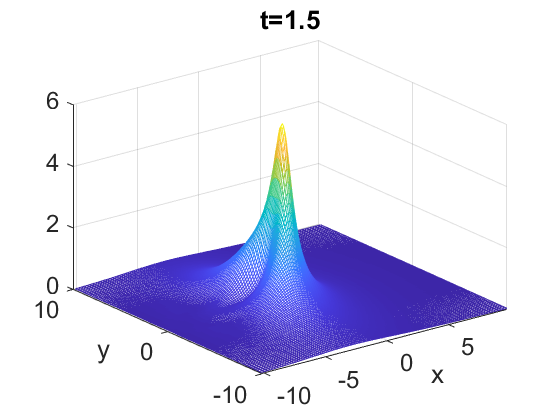}
\includegraphics[width=0.32\textwidth]{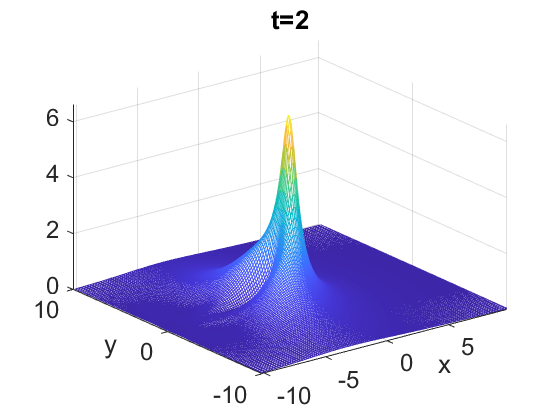}
\includegraphics[width=0.32\textwidth]{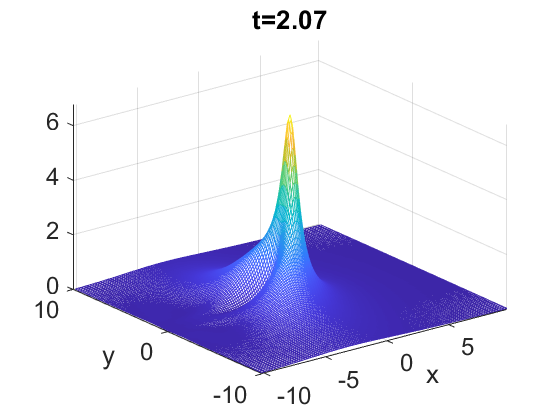}
\caption{Snapshots of the solution $u(t)$ with $u_0=\frac{4.5}{1+((x+2.5)^2+(0.5y)^2)^2}$.}
\label{F:profile x4 B}
\end{figure}

\begin{figure}[ht]
\includegraphics[width=0.40\textwidth]{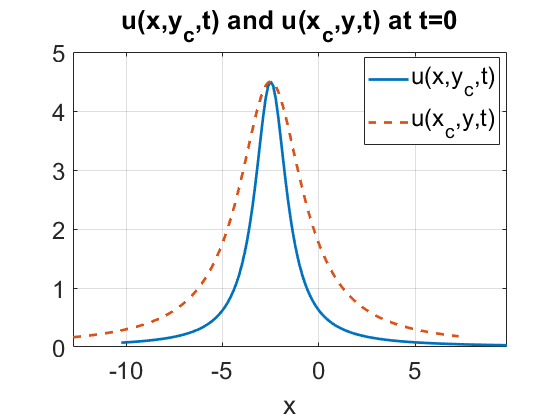}
\includegraphics[width=0.40\textwidth]{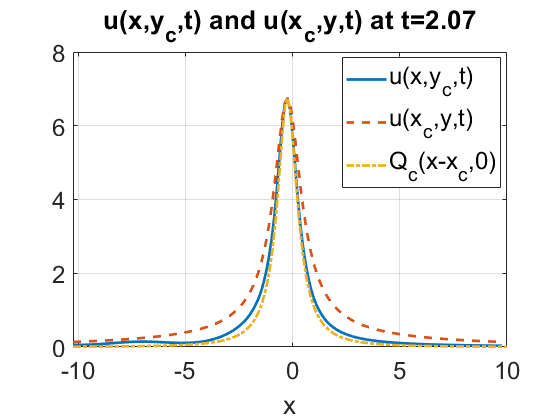}\\
\includegraphics[width=0.41\textwidth]{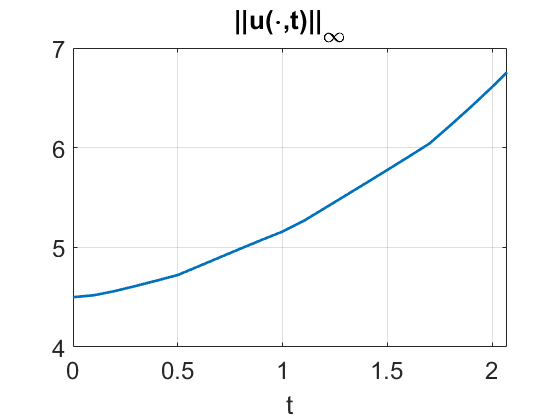}
\includegraphics[width=0.40\textwidth]{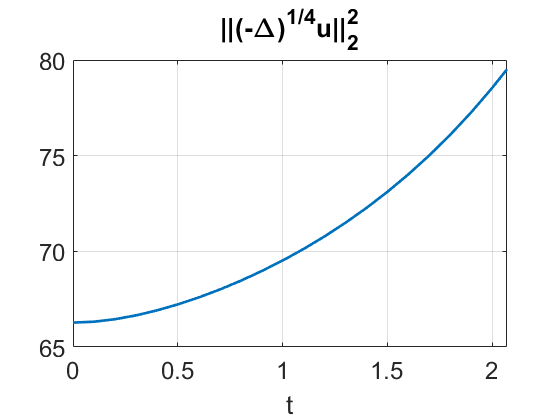}
\caption{The cross-sections of the solution $u(t)$ with $u_0=\frac{4.5}{1+((x+2.5)^2+(0.5y)^2)^2}$ at $t=0$ and $t=2.07$ (top); growth of the norms in time (bottom). 
}
\label{F:x4 data B}
\end{figure}

We also check the non-radial data with slower decay
\begin{equation}\label{E:nonradial-blowup2}
u_0(x,y)=\frac{A}{1+(x+a)^2+(0.5y)^2},
\end{equation}
which has the norm $\|u_0\|_{L^2(\mathbb R^2)} = \sqrt{2 \, \pi} \, A$, and hence, the threshold value $A_{th} \approx 2.6$. Taking $A>A_{th}$, we observe a similar blow-up behavior as shown in Figures \ref{F:profile x4 B}-\ref{F:x4 data B}. 
\medskip

Finally, we mention the gaussian type of data \eqref{ID:gaussian}, which has an exponential decay. Taking $A > A_{th} \approx 5$, we note that 
for example, $A=5.5$ in $u_0 = A\, e^{-(x^2+y^2)}$ and $E[u_0] = 0.87 >0$ produces a blow-up in finite time, similarly, 
$A=6$ in the same $u_0$, which gives $E[u_0] = -2.11 < 0$, also produces a blow-up solution. Thus, it is possible to have solutions with the positive and negative energy that blow-up in finite time (see remarks after Conjecture \ref{C:critical}).  
\smallskip

We conclude that the part 2 of Conjecture \ref{C:critical} holds for all data that we considered. Furthermore, a stable blow-up shows a self-similar dynamics with the ground state profile.  

\newpage

\subsection{Interaction of solitary waves}\label{S:interaction}
We next investigate the interaction of two solitary waves, for that we take two 
rescaled solitary waves $Q_{c_1}$ and $Q_{c_2}$ as defined in \eqref{E:Qc}, and track their evolution and interaction. 

We first consider the two solitary-waves that are separated along the $x$-axis, i.e.,
\begin{equation}\label{E:2Q-1}
u_0(x,y)=a_1 \, Q_{c_1}(x+x_1,y)+a_2\, Q_{c_2}(x+x_2,y).
\end{equation}

\begin{figure}[ht]
\includegraphics[width=0.32\textwidth]{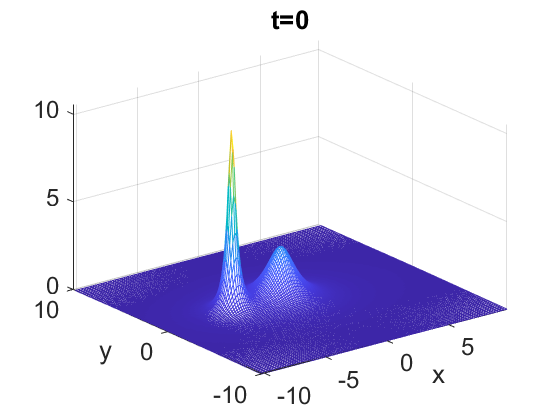}
\includegraphics[width=0.32\textwidth]{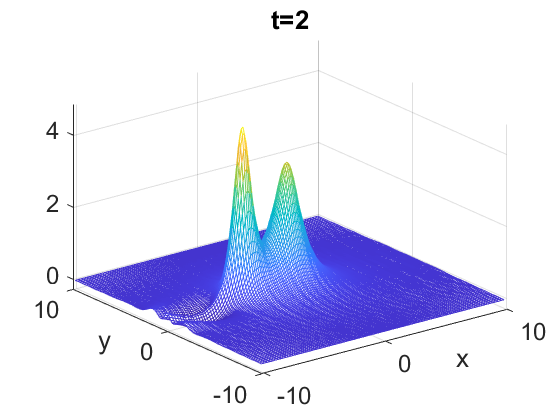}
\includegraphics[width=0.32\textwidth]{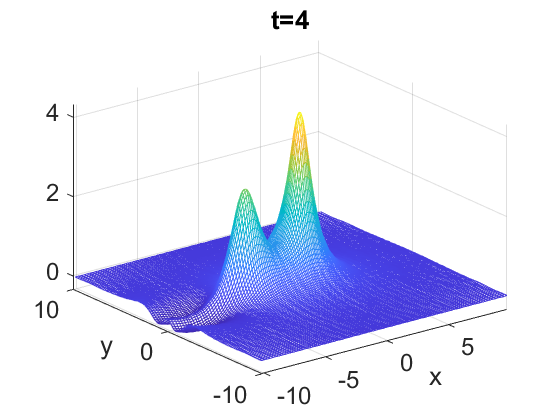}
\includegraphics[width=0.32\textwidth]{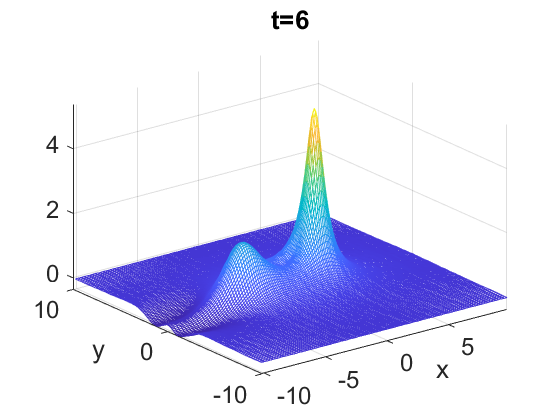}
\includegraphics[width=0.32\textwidth]{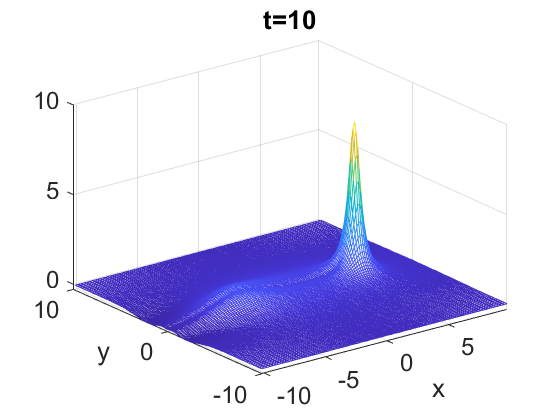}
\includegraphics[width=0.32\textwidth]{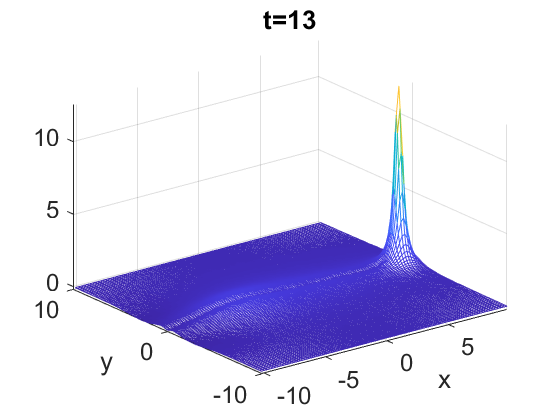}
\caption{Snapshots of interaction for $u_0=0.9 \, Q(x+5,y) + Q_{0.25}(x+1,y)$.}
\label{F:profile 2Q B}
\end{figure}

In Figure \ref{F:profile 2Q B}, we show the snapshots of the time evolution for the initial condition 
$$
u_0=0.9\, Q(x+5,y)+Q_{0.25}(x+1,y).
$$ 
While the mass for each bump is smaller than our predicted threshold $\|Q\|_{L^{2}(\mathbb{R}^2)}$, the total mass is greater than $\|Q\|_{L^{2}(\mathbb{R}^2)}$ (here, $\|u_0\|_{L^2(\mathbb R^2)}^2 \approx 83.06$
). In Figure \ref{F:profile 2Q B}, we can see the higher solitary wave travels faster than the lower one, and they interact, merging together, between the time $2<t<4$. After $t>4$, they split. In the process of interaction, the initial higher bump obtains sufficient amount of mass, and continue traveling in the positive $x$-direction, it blows up in finite time, while the smaller bump loses the mass and completely radiates to the left. 

\begin{figure}[ht]
\includegraphics[width=0.38\textwidth]{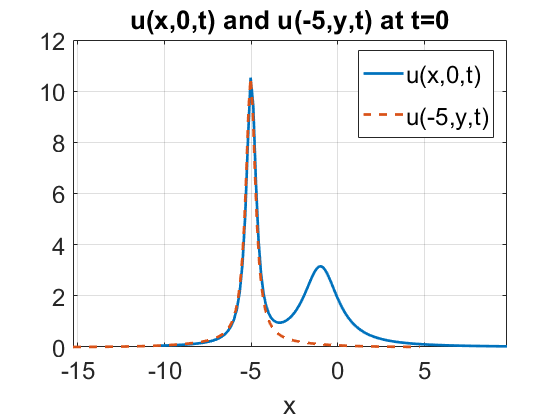}
\includegraphics[width=0.38\textwidth]{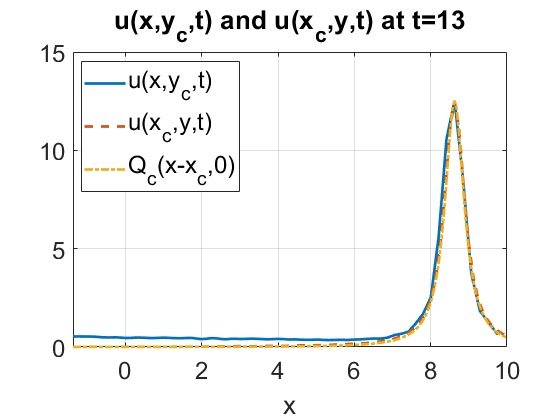}\\
\includegraphics[width=0.38\textwidth]{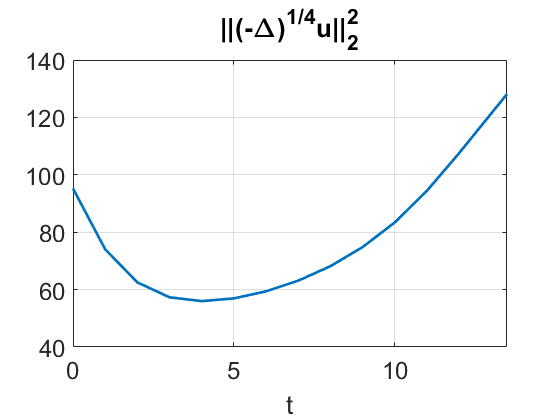}
\includegraphics[width=0.38\textwidth]{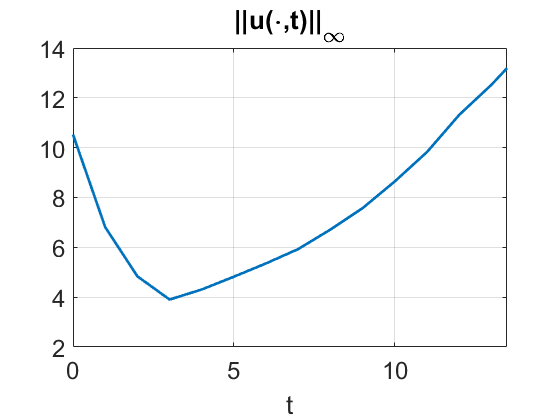}
\caption{Top: cross-sections before and after the interaction for $u_0=0.9Q(x+5,y)+Q_{0.25}(x+1,y)$. As the higher bump obtains sufficient mass after the interaction, it blows up with the rescaled ground state profile $Q_c$. Bottom: time dependence of the kinetic energy and the $L^\infty$ norm.}
\label{F:2Q B data}
\end{figure}

If the two solitary waves are not sufficiently large, 
they will interact similarly: the higher, and thus, faster one will merge into the slower one, and then split from each other, after the interaction, with the faster one obtaining some additional mass from the slower one, but eventually both will disperse into the radiation, see Figure \ref{F:profile 2Q S} for 
$$
u_0=0.7\, Q_{0.5}(x+4,y)+0.5 \, Q_{0.25}(x,y)).
$$
In this case the total mass $\|u_0\|_{L^2(\mathbb R^2)}^2 \approx 41.5605.$

\begin{figure}[ht]
\includegraphics[width=0.32\textwidth]{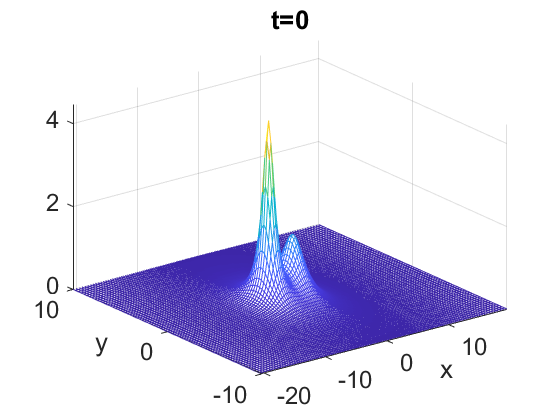}
\includegraphics[width=0.32\textwidth]{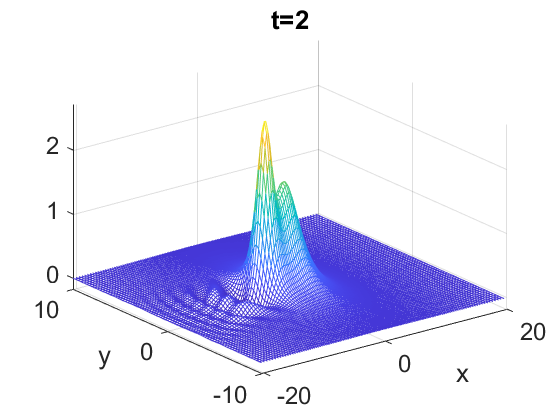}
\includegraphics[width=0.32\textwidth]{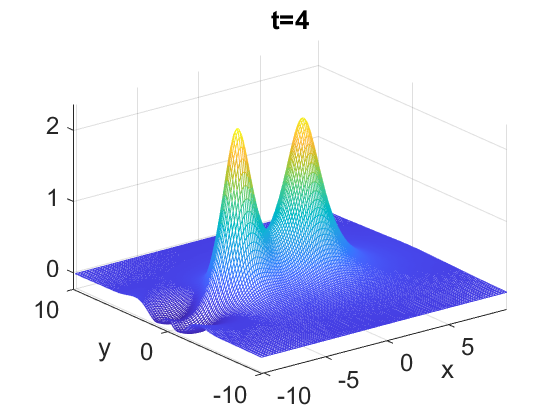}
\includegraphics[width=0.32\textwidth]{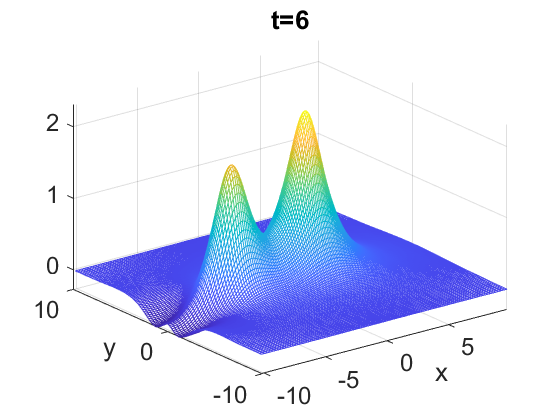}
\includegraphics[width=0.32\textwidth]{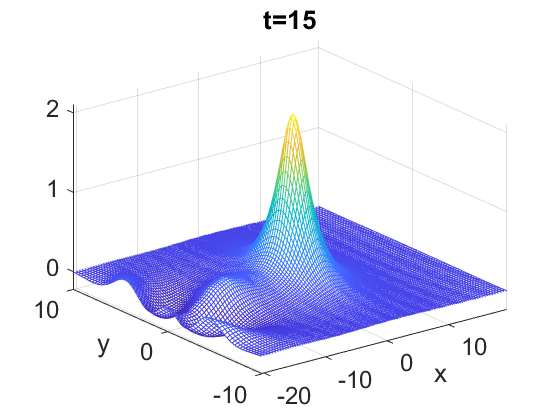}
\includegraphics[width=0.32\textwidth]{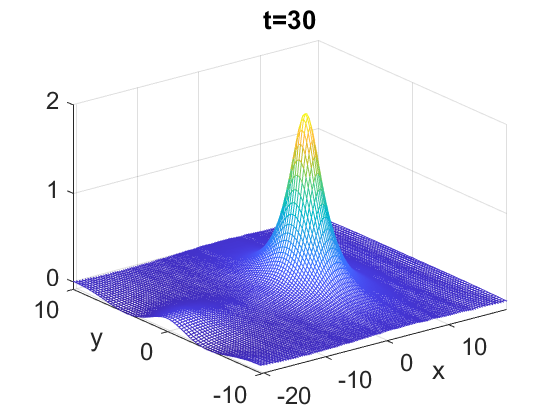}
\caption{Snapshots of interaction for $u_0=0.7\,Q_{0.5}(x+4,y)+0.5\,Q_{0.25}(x,y))$.}
\label{F:profile 2Q S}
\end{figure}

\begin{figure}[ht]
\includegraphics[width=0.38\textwidth]{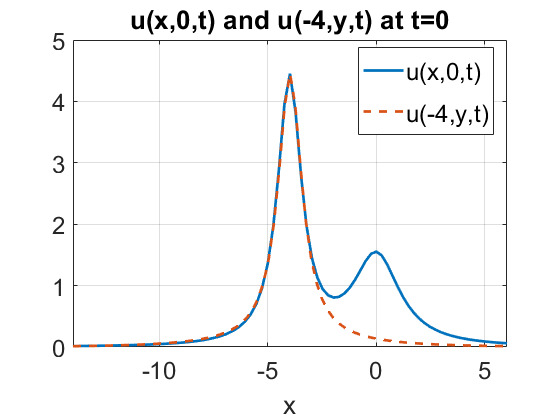}
\includegraphics[width=0.38\textwidth]{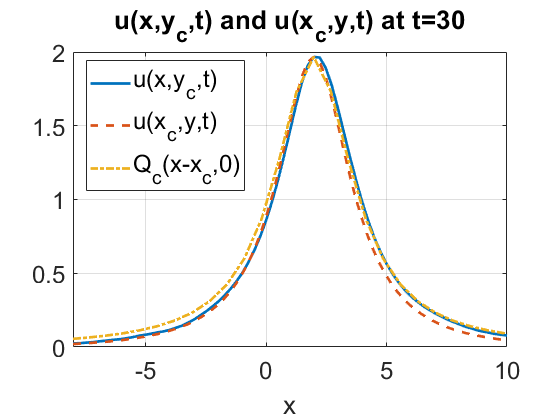}\\
\includegraphics[width=0.38\textwidth]{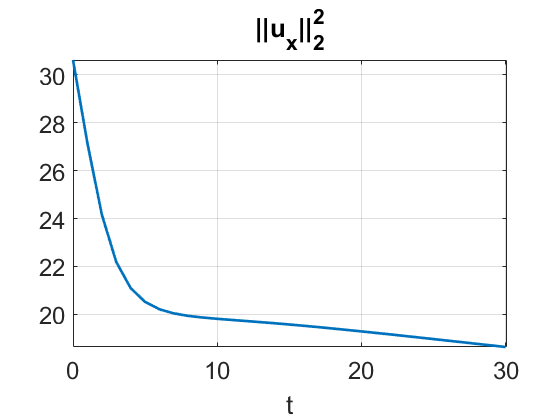}
\includegraphics[width=0.38\textwidth]{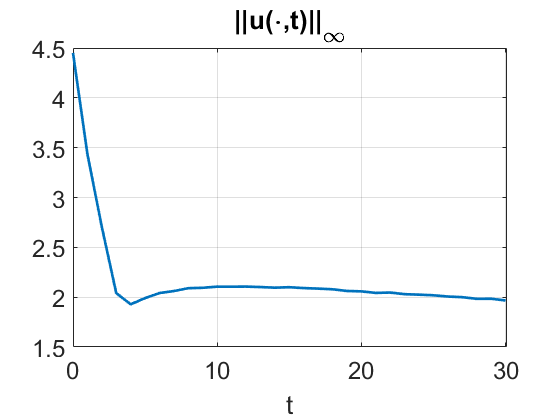}
\caption{Top: cross-sections before and after interaction for 
$u_0=0.7\,Q_{0.5}(x+4,y)+0.5\,Q_{0.25}(x,y))$. The higher bump obtains some extra mass after the interaction, however, it is not sufficient to develop a blow-up, and thus, both bumps eventually radiate. While decreasing in its heigh, the solution maintains radial symmetry and is close to the rescaled profile $Q_c$.}
\label{F:2Q S data}
\end{figure}

\medskip

We next modify the initial data and separate the two solitary waves in the $y$-coordinate. For example, consider 
\begin{equation}\label{E:inter1}
u_0=0.9\, Q(x,y-5) + 0.9 \,Q(x,y+5),
\end{equation}
so there is about 10 units of separation in $y$. One can see the two bumps moving parallel along the $x$-direction without much of an interaction, and eventually, radiate away, see Figure \ref{F:profile 2Qy S} as if they would just exist on their own. The energy in this case is $E[u_0] \approx 2.69 > 0$.

\begin{figure}[ht]
\includegraphics[width=0.32\textwidth]{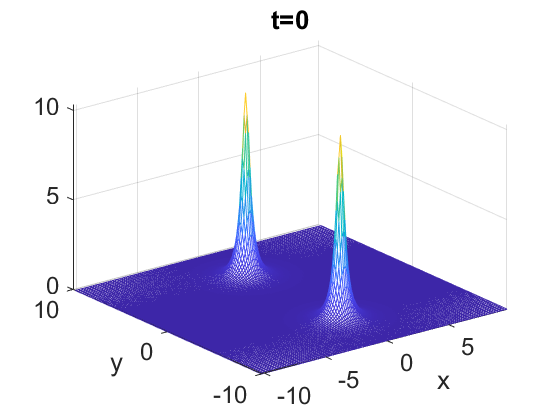}
\includegraphics[width=0.32\textwidth]{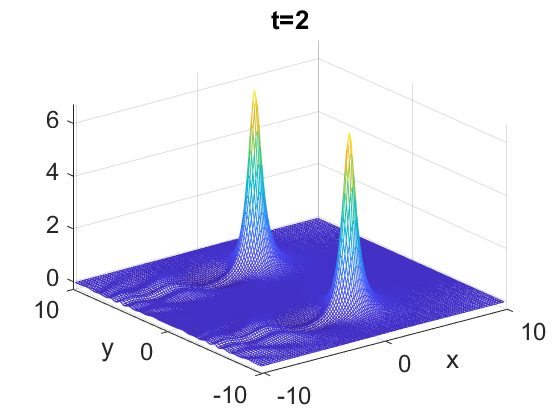}
\includegraphics[width=0.32\textwidth]{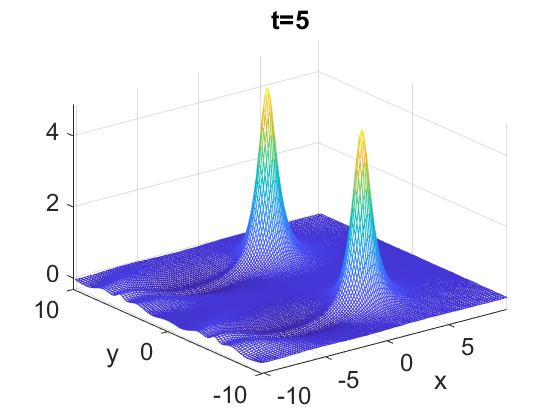}
\includegraphics[width=0.32\textwidth]{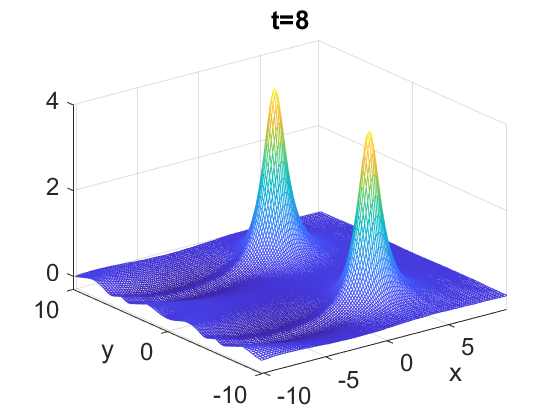}
\includegraphics[width=0.32\textwidth]{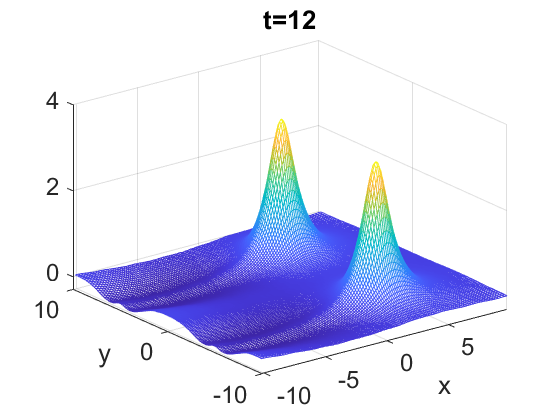}
\includegraphics[width=0.32\textwidth]{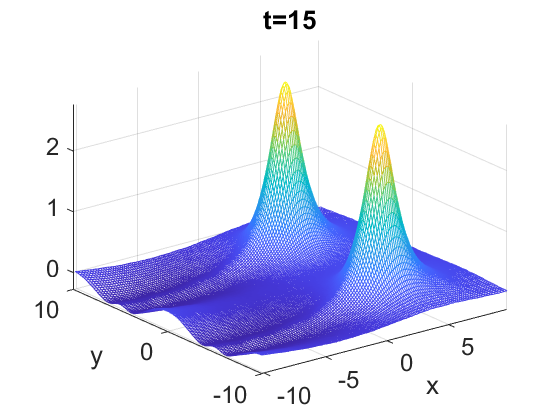}
\caption{Snapshots of time evolution for $u_0=0.9\, Q(x,y-5) + 0.9 \,Q(x,y+5))$.}
\label{F:profile 2Qy S}
\end{figure}

In our final example, we consider the same two solitary waves as before, but now they are separated in the $y$-coordinate not as much, so the two bumps are sufficiently close to each other. The initial condition is 
\begin{equation}\label{E:inter2}
u_0=0.9 \, Q(x+5,y-1) + 0.9 \,Q(x+5,y+1)),
\end{equation} 
there is only 2 units of separation in $y$, for a depiction see Figure \ref{F:profile 2Qy B}. 

\begin{figure}[ht]
\includegraphics[width=0.32\textwidth]{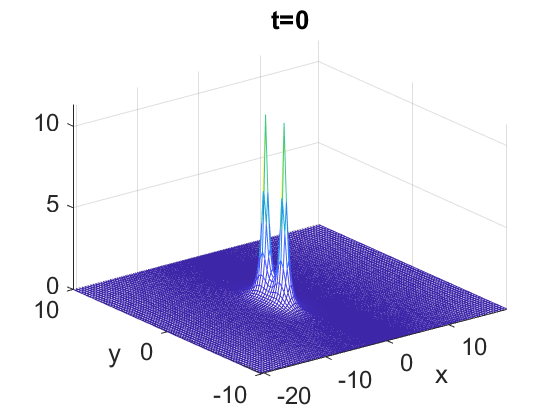}
\includegraphics[width=0.32\textwidth]{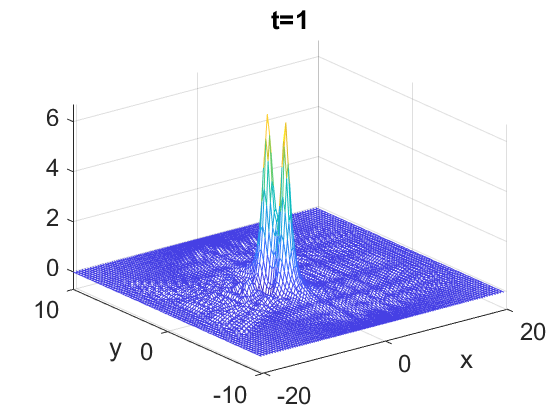}
\includegraphics[width=0.32\textwidth]{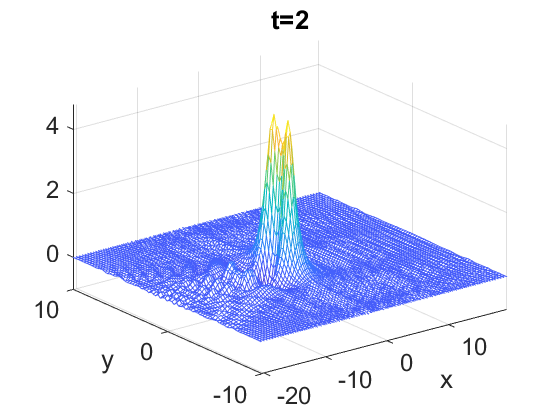}
\includegraphics[width=0.32\textwidth]{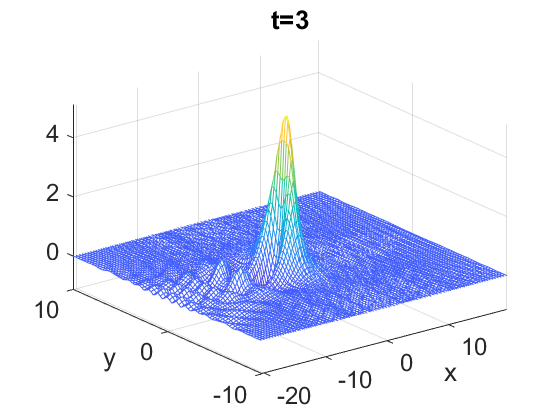}
\includegraphics[width=0.32\textwidth]{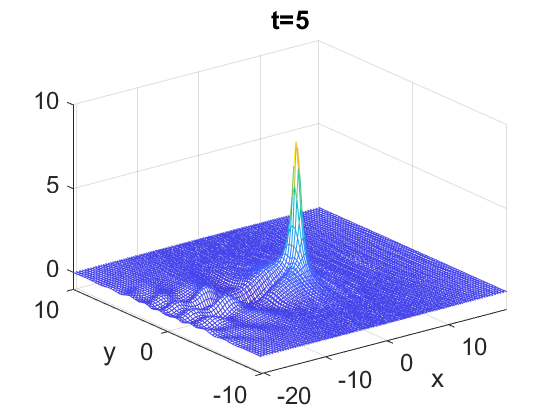}
\includegraphics[width=0.32\textwidth]{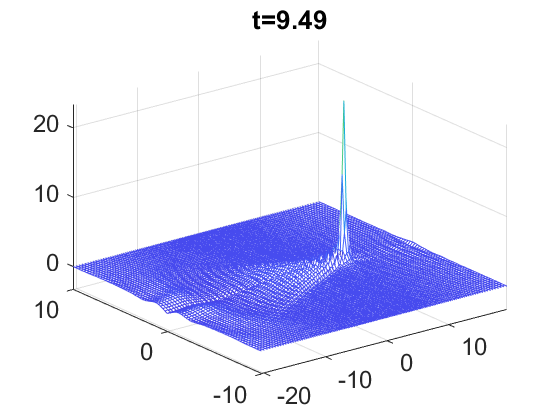}
\caption{Snapshots of strong interaction for $u_0=0.9\, Q(x+5,y-1)+ 0.9 \,Q(x+5,y+1)$.}
\label{F:profile 2Qy B}
\end{figure}

\begin{figure}[ht]
\includegraphics[width=0.32\textwidth]{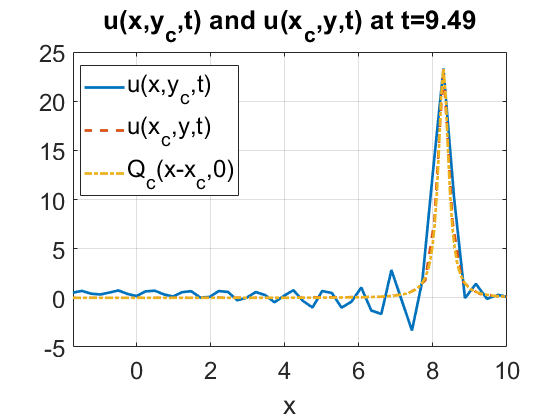}
\includegraphics[width=0.32\textwidth]{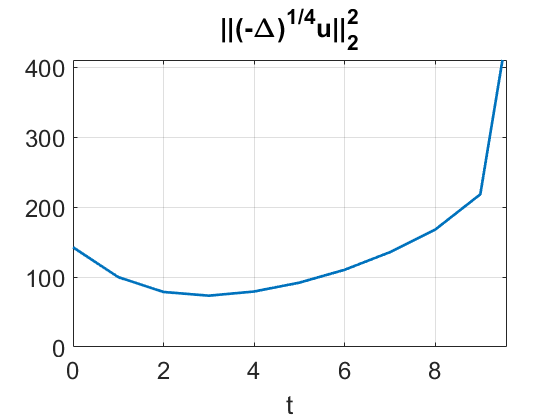}
\includegraphics[width=0.32\textwidth]{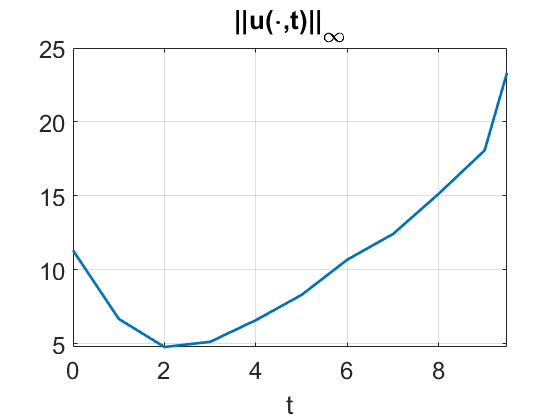}
\caption{Details on the interaction for $u_0=0.9(Q(x+5,y-1)+Q(x+5,y+1))$. Left: cross-sections after the interaction at the final computational time and matching with the rescaled $Q_c$. Middle and right: time dependence of the kinetic energy and the $L^\infty$ norm.}
\label{F:2Qy B data}
\end{figure}

The interaction happens as the two solitary waves merge into one. A joint lump will have sufficient mass and will generate a finite time blow-up solution. One can see that as merging together occurs, the radiation wedge is being generated, which clearly continues after the two bumps merged into one. Since the merged solution has large enough mass, it will blow up in finite time (see the height of snapshots in the bottom row of Figure \ref{F:profile 2Qy B}). 
We show the cross-sections at the final (computational) time $t=9.49$ as well as the matching with the rescaled $Q_c$ in the left graph of Figure \ref{F:2Qy B data}. 
The dependence on time of the kinetic energy and the $L^\infty$ norm is shown in the middle and right graphs of the same figure. One can notice that the height during the merging drops significantly (around time $t=2$), but due to the sufficient mass, the solution picks up the growth of its height and its kinetic energy. We note that the energy in the last two examples \eqref{E:inter1} and \eqref{E:inter2} is the same and positive, i.e., $E[u] \approx 2.69 > 0$. 


\newpage

\bibliographystyle{abbrv}
\bibliography{ref_HBO}

\end{document}